\theoremstyle{plain}
\newtheorem{thm}{Theorem}[section]
\newtheorem{prop}[thm]{Proposition}
\newtheorem{cor}[thm]{Corollary}
\newtheorem{lem}[thm]{Lemma}
\theoremstyle{definition}
\newtheorem{exa}[thm]{Example}
\newtheorem{rem}[thm]{Remark}
\newtheorem{defn}[thm]{Definition}
\newcommand*{\longhookrightarrow}{\ensuremath{\lhook\joinrel\relbar\joinrel\rightarrow}}
\def\Ker{\mathop{\mathrm{Ker}}\nolimits}
\def\Coker{\mathop{\mathrm{Coker}}\nolimits}
\def\Hom{\mathop{\mathrm{Hom}}\nolimits}
\def\F{\mathop{\mathbb{F}}\nolimits}
\def\gr{\mathop{\mathrm{gr}}\nolimits}
\newcommand{\tri}{{ \lhd}}
\newcommand{\SQ}{{\mathsf{Sp}}}
\newcommand{\col}{{\rm Col}}
\newcommand{\lra}{\longrightarrow}
\newcommand{\ra}{\rightarrow}
\newcommand{\Z}{{\Bbb Z}}
\newcommand{\As}{{\rm As }}
\newcommand{\D}{{\cal D}_{g}}
\newcommand{\DD}{{{\cal D}_{g}}}
\newcommand{\M}{{\cal M}}
\newcommand{\CC}{{\cal C }}
\newcommand{\X}{\widetilde{X}  }
\newcommand{\OX}{{\rm O}(X) }
\newcommand{\pc}[2]{\mbox{$\begin{array}{c}
\includegraphics[scale=#2]{#1.eps}
\end{array}$}}
\begin{document}
\large
\begin{center}
{\bf\Large Homotopical interpretation of link invariants from finite quandles }
\end{center}


\vskip 1.5pc\begin{center}{\Large Takefumi Nosaka}\end{center}\vskip 1pc\begin{abstract}\baselineskip=13pt \noindent This paper demonstrates a topological meaning of quandle cocycle invariants of links with respect to finite connected quandles $X$, from a perspective of homotopy theory:
Specifically, for any prime $\ell$ which does not divide the type of $X$, the $\ell$-torsion of this invariants is equal to a sum of
the colouring polynomial and a $\Z$-equivariant part of the Dijkgraaf-Witten invariant of a cyclic branched covering space.
Moreover, our homotopical approach involves application of computing some third homology groups and second homotopy groups of the classifying spaces of quandles,
from results of group cohomology.
\end{abstract}

\begin{center}
\normalsize
{\bf Keywords}
\baselineskip=14pt
\ \ Quandle, group homology, homotopy group, link, branched covering,

\ \ \ bordism group, orthogonal and symplectic group, mapping class group \ \ \
\end{center}
\large\baselineskip=16pt\section{Introduction.}
A quandle, $X$, is a set with a binary operation $\lhd :X^2 \ra X$ the axioms of which were motivated by knot theory.
The operation is roughly the conjugation of a group;
In analogy of group (co)homology,
Carter et. \cite{CJKLS,CEGS} defined quandle (co)homology,
and used some cocycles to define some knot-invariants, which we call {\it quandle cocycle invariants}.
As mentioned in \cite{CJKLS}, the definition is
an analogue of the Dijkgraaf-Witten invariant \cite{DW} of closed oriented 3-manifolds $M$, which is constructed from a finite group $G$ and a 3-cocycle $\kappa \in H^3_{\gr}(G;A)$:
Precisely, the invariant is topologically expressed as the formal sum of pairings
\begin{equation}\label{DWDWder9} \mathrm{DW}_{\kappa }(M ):= \sum_{ f \in \Hom_{\mathrm{gr}} ( \pi_1(M), G ) } \langle f^{*}(\kappa ), [M] \rangle \in \Z[A],
\end{equation}
where $[M ] $ is the orientation 3-class in $ H_3(M ;\Z)$.
While this $ \mathrm{DW}_{\kappa }(M )$ is far from being computable by definition,
the cocycle invariants from quandles can be diagrammatically computable if we find an explicit description of a quandle cocycle.
However, 
since the cocycle invariants are defined from link diagrams, their topological meanings are mysterious.

To solve such topological problems, this paper is inspired by Quillen-Friedlander theory \cite{Fri} in group homology.
Roughly speaking the theory, if a finite group $G$ is of Lie type and the abelianization of $G$ is almost zero,
the group homology $ H_*^{\gr}(G ;\Z) $ in a stable range can be computed from homotopy groups $\pi_*(K(G,1)^+)$.
Here, the space $K(G,1)^+$ is so-called the Quillen plus construction and, in many cases, is homotopic to an infinite loop space, i.e., $\Omega$-spectrum.
For example, the Chern-Simons invariant which is a ploto model of $\mathrm{DW}_{\kappa }(M ) $ can be universally interpreted as a plus construction or as the algebraic $K$-theory (see \cite{DW,JW}).

In this paper, from such a homotopical viewpoint, we shall focus on the quandle {\it homotopy} (knot-)invariant, which is valued in the group ring $\Z[\pi_2(BX)]$.
Here, $BX$ is the rack space $BX$ of a quandle $X$ and was introduced by Fenn, Rourke and Sanderson \cite{FRS1,FRS2}, as an analogy of the classifying spaces of groups.
Further, as is known \cite{FRS2,RS}, the homotopy invariant is universal in the sense that any cocycle invariant is
derived from the homotopy invariant
via the Hurewicz map $\pi_2(BX) \to H_2(BX;A)$ with local system (see \cite[\S 2]{Nos1} for the detailed formula).
Therefore, it is natural to address $\pi_2(BX)$ for the study the required topological meaning of the cocycle invariants.

The main theorem demonstrates a topological meaning of the {\it homotopy} invariants as a general statement,
together with computing quantitatively the homotopy groups $\pi_2(BX) $.
To state the theorem, we set two simple notation:
A quandle $X$ is said to be {\it connected}, if any $x, y \in X$ admit some $a_1, \dots, a_n \in X$ such that
$(\cdots (x \lhd a_1) \lhd \cdots) \lhd a_n = y$; {\it The type} $t_X $ of $X$ is
the minimal $N$ such that $ x= (\cdots (x \lhd y) \lhd \cdots) \lhd y $ [$N$-times on the right with $y$] for any $x, y \in X$.

\begin{thm}[{Theorem \ref{theorem}} \footnote{After \S \ref{ss1}, we employ reduced notation of $BX$ for simplicity.
For example, we use two groups, $\As(X)$ and $\Pi_2(X)$, such that $\pi_1(BX) \cong \As(X)$ and
$\pi_2(BX) \cong \Z \oplus \Pi_2(X)$ (see \eqref{bxbx}). Furthermore, instead of the homology $H_*(BX)$,
we mainly deal with the quandle homology $H_*^Q(X;\Z)$ introduced in \cite{CJKLS}, and
remark two known isomorphisms $H_2(BX) \cong \Z \oplus H_2^Q(X)$ and $H_3(BX) \cong \Z \oplus H_2^Q(X) \oplus H_3^Q(X)$ (see \eqref{eq.H2RQ} and \eqref{eq.H3RQ}).
}]\label{introthm}Let $X$ be a connected quandle of type $t_X $ and of finite order.
Let $\mathcal{H}_X : \pi_2(BX) \to H_2(BX;\Z)$ be the Hurewicz map as a common sense.
Then there is a homomorphism $\Theta_X $ from $\pi_2(BX)$ to the third group homology $ H_3^{\gr}( \pi_1 (BX);\Z )$ for which the sum
$$ \Theta_X \oplus \mathcal{H}_X : \pi_2(BX) \lra H_3^{\gr}( \pi_1 (BX);\Z) \oplus H_2(BX;\Z) $$
is an isomorphism after localization at $\ell \in \Z$, where $\ell$ is relatively prime to $t_X $.
\end{thm}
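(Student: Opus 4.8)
Throughout write $G=\pi_1(BX)\cong\As(X)$, let $M=\pi_2(BX)$ be the associated $\Z[G]$-module, $I\subset\Z[G]$ the augmentation ideal, and $\e\colon G\twoheadrightarrow\Z$ the abelianisation (which is onto $\Z$ since $X$ is connected). The strategy is to compare $BX$ with the Eilenberg--MacLane space $B\As(X)=K(\As(X),1)$ and to extract a splitting after localisation from the type hypothesis. The canonical quandle map $X\to G$ induces $f\colon BX\to B\As(X)$, an isomorphism on $\pi_1$; hence its homotopy fibre $F$ is (homotopy equivalent to) the universal cover $\widetilde{BX}$, simply connected with $\pi_2(F)\cong M$, so $H_1(F)=0$ and $H_2(F)\cong M$ by the Hurewicz theorem. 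I run the Serre (equivalently Cartan--Leray) spectral sequence
$$E^2_{p,q}=H_p\!\bigl(G;H_q(F)\bigr)\ \Longrightarrow\ H_{p+q}(BX).$$
Since $H_1(F)=0$, total degrees $2$ and $3$ only see the rows $q=0$ (giving $H^{\gr}_p(G)$), $q=2$ (giving $H_p(G;M)$) and $q=3$ (in degree $3$), and in degree $2$ one reads off the short exact sequence
$$0\ \lra\ \Coker\!\bigl(d_3\colon H^{\gr}_3(G)\to M_G\bigr)\ \lra\ H_2(BX)\ \xrightarrow{\ f_*\ }\ H^{\gr}_2(G)\ \lra\ 0,$$
with $\mathcal{H}_X$ equal to the edge homomorphism $M=H_2(F)\twoheadrightarrow M_G\twoheadrightarrow\Coker(d_3)\hookrightarrow H_2(BX)$; in particular $\ker(\mathcal{H}_X)$ is an extension of $\mathrm{im}(d_3)$ by $IM$, and $f_*\circ\mathcal{H}_X=0$ by naturality of the Hurewicz map. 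A parallel reading in degree $3$ describes $H_3(BX)$ through $\ker(d_3)\subset H^{\gr}_3(G)$, a subquotient of $H_1(G;M)$, and $H_3(F)_G$.

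Now the hypotheses enter, and this is the heart of the matter. Connectedness makes the image of $X$ a single conjugacy class of $G$ with $G^{\mathrm{ab}}\cong\Z$; the type condition makes $c:=x_0^{\,t_X}$ (for any fixed $x_0\in X$) central with $\e(c)=t_X$, and the $G$-action on everything built diagrammatically from $X$ — in particular on $M=\pi_2(BX)$ — is governed by the \emph{finite} quotient $\overline G:=G/\langle c\rangle$, so that $1\to\langle c\rangle\to G\to\overline G\to1$ is a central extension with $\langle c\rangle\cong\Z$. Feeding this into a transfer / Lyndon--Hochschild--Serre analysis along $\langle c\rangle$, I aim to prove that after $\otimes\Z_{(\ell)}$ with $\ell$ prime to $t_X$: (i) $H^{\gr}_2(G)_{(\ell)}=0$, so $\mathcal{H}_X$ becomes surjective onto $H_2(BX)_{(\ell)}$; (ii) the transgression $d_3$ vanishes after localisation, so the localised sequence $0\to(IM)_{(\ell)}\to M_{(\ell)}\xrightarrow{\mathcal{H}_X}H_2(BX)_{(\ell)}\to0$ is exact; and (iii) the still-to-be-constructed $\Theta_X$ restricts to an isomorphism $(IM)_{(\ell)}\xrightarrow{\ \sim\ }H^{\gr}_3(G)_{(\ell)}$ — for which one compares $IM=\ker(M\to M_G)$ with $H^{\gr}_3(G)$ via the degree-$3$ part of the same spectral sequence together with the independent computation of $\pi_2(BX)$ and of $H^{\gr}_3(\As(X))$ from group cohomology announced in the abstract. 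This localisation step — that inverting $t_X$ genuinely kills $H^{\gr}_2(G)$ and $d_3$, and that the augmentation-ideal part of $\pi_2(BX)$ is exactly $H^{\gr}_3(\As(X))$ after localisation — is the main obstacle.

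It remains to produce $\Theta_X$ and to assemble. Since $\pi_2(B\As(X))=0$, the composite $f\circ\alpha$ is null for every $\alpha\colon S^2\to BX$; representing $\alpha$ by a cycle in the equivariant chain complex of $BX$, its image in the bar complex of $\As(X)$ bounds, and a suitably normalised bounding chain, rendered equivariantly closed using the combinatorics of the rack space, represents $\Theta_X(\alpha)\in H^{\gr}_3(\As(X))$; naturality and additivity are built in, and well-definedness is checked on the chain level. By construction $\Theta_X|_{\ker(\mathcal{H}_X)}$ realises, after localisation, the isomorphism of (iii). Finally, localising at $\ell$ places the degree-$2$ exact sequence over $0\to IM\to M\to M_G\to0$ as a commutative ladder whose outer vertical maps are $\Theta_X|_{IM}\colon(IM)_{(\ell)}\to H^{\gr}_3(G)_{(\ell)}$ and $\mathcal{H}_X\colon(M_G)_{(\ell)}\to H_2(BX)_{(\ell)}$, both isomorphisms by (i)--(iii); the five lemma then gives that $\Theta_X\oplus\mathcal{H}_X\colon M_{(\ell)}\to\bigl(H^{\gr}_3(G)\oplus H_2(BX)\bigr)_{(\ell)}$ is an isomorphism, which is the assertion.
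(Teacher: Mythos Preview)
Your spectral-sequence setup is the same as the paper's, but two structural problems break the argument.

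First, the $IM$ framework collapses. A fact you do not invoke (but the paper uses essentially, citing \cite{FRS2} and \cite[Proposition~2.16]{Cla}) is that the action of $\pi_1(BX)$ on $\pi_*(BX)$ is \emph{trivial}. Hence $M_G=M$ and $IM=0$, so your short exact sequence $0\to IM\to M\to M_G\to 0$ is $0\to 0\to M\to M\to 0$. Your claim (iii), that $\Theta_X|_{IM}\colon (IM)_{(\ell)}\to H^{\gr}_3(G)_{(\ell)}$ is an isomorphism, then forces $H^{\gr}_3(G)_{(\ell)}=0$, which is false in general; and your claim that $\mathcal{H}_X\colon (M_G)_{(\ell)}\to H_2(BX)_{(\ell)}$ is an isomorphism is also wrong --- it is only surjective, with kernel the image of the transgression. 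The correct localised sequence is $0\to H^{\gr}_3(G)_{(\ell)}\xrightarrow{\tau} M_{(\ell)}\xrightarrow{\mathcal{H}_X} H_2(BX)_{(\ell)}\to 0$, and the task is to produce a \emph{retraction} of $\tau$.

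Second, your construction of $\Theta_X$ is not well-defined. You take a bounding $3$-chain for $f_*(z)$ in the bar complex, but such chains are only determined up to $3$-cycles, i.e.\ up to elements of $H^{\gr}_3(G)$ --- precisely the ambiguity you need to resolve. Saying ``suitably normalised'' and ``rendered equivariantly closed using the combinatorics of the rack space'' is not a construction. The paper builds $\Theta_X$ by an entirely different, topological route: given an $X$-colouring $\mathcal{C}$ of a link $L$, one passes to the $t_X$-fold cyclic branched cover $\widehat{C}_L^{t_X}$, observes that $\Gamma_{\mathcal{C}}$ descends to $\pi_1(\widehat{C}_L^{t_X})\to\As(X)$ (using that $e_x^{t_X}$ is central and independent of $x$), and pushes forward the fundamental class. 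Proving that this gives a splitting of $\tau$ modulo $t_X$ (Proposition~\ref{centthm4o1}) is the genuine content, done by comparison with relative bordism groups and by first handling the universal quandle covering $\widetilde{X}\to X$.

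Finally, your step (i)--(ii), that localising at $\ell\nmid t_X$ kills $H^{\gr}_2(G)$ and the transgression, is asserted via ``transfer/LHS along $\langle c\rangle$'' but not proved: the finite quotient $G/\langle c\rangle$ has order divisible by many primes other than those dividing $t_X$, so a transfer argument does not give $t_X$-annihilation. The paper establishes the $t_X$-vanishing of $c_*\colon H_n(BX)\to H^{\gr}_n(\As(X))$ for $n\le 3$ by an explicit chain homotopy (Appendix~\ref{dsepr}).
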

\noindent
Moreover, the sum $ \Theta_X \oplus \mathcal{H}_X$ which we call {\it the TH-map} plays an important role as follows:
We will show (Corollary \ref{thm1bts}) that, for any link $L \subset S^3$,
the homotopy invariant $\Xi_X(L) \in \Z[\pi_2(BX )]$ is sent to a sum of two invariants via the TH-map $ \mathcal{H}_X \oplus \Theta_X$:
precisely, the original quandle 2-cycle invariant $\in \Z[H_2(BX)]$ in \cite{CJKLS} and ``a $\Z$-equivariant part" of Dijkgraaf-Witten invariant of $\widehat{C}_{L}^{t_X} $ (see \eqref{DWDWder2} for the definition),
where $\widehat{C}_{L}^{t_X}$ is the $t_X $-fold cyclic covering space of $S^3$ branched over the link $L$.
To summarize, this corollary \ref{thm1bts} is figuratively summarized to the following equality in the group ring $ \Z[\pi_2(BX) ]$ up to $t_X\textrm{-torsion}$:
$$ \left(
\begin{array}{c}
{\rm Quandle \ homotopy} \\
{\rm invariant \ of \ } L \end{array}
\right)
=
\left(
\begin{array}{c}
\Z \textrm{-}{\rm equivariant \ part } \\
{\rm of \ }DW_{\kappa} ( \widehat{C}_{L}^{t_X}) \end{array}
\right) +
\left(
\begin{array}{c}
{\rm Quandle \ }2\textrm{cycle} \\
{\rm invariant \ of \ } L \end{array}
\right) 
$$
Here remark that, as is shown \cite{Eis2}, the second term is topologically characterized by longitudes of $X$-colored links (see \S \ref{SS3j3} for details).
In conclusion, via the TH-map, 
the homotopy invariant $\Xi_X(L)$ without $t_X $-torsion is reduced to the two topological invariants as desired.

Further, we should compare with the previous works and explain the type $t_X$ in some details.
Two papers \cite{Kab,HN} tried to extract a topological meaning of the cocycle invariants;
however, as in \cite{FRS2, Cla, Nos2} as some topological approaches to $BX$,
the second homology $ H_2(BX;\Z)$ is an obstacle to study the space $BX$. Accordingly
successful works were only for the simplest quandle of the form $X=\Z/(2m-1)$ with $x \lhd y:= 2y-x$.
Furthermore, as seen in \S \ref{twoto}, the type $t_X$ is often smaller than the order of $ H_2(BX)$ in many cases.
Hence, the above theorem is quite general,
and implies that a minimal obstacle in studying $\pi_2(BX)$ and topological meanings
is the type $t_X $ of $X$, rather than the second homology $ H_2(BX)$.




In addition, this paper further determine $\pi_2(BX) $ of several quandles $X$:
``regular Alexander quandles", most ``symplectic quandles over $\F_q $", ``Dehn quandle $\D $", extended quandles and the connected quandles of order $\leq 8$ (see Section \ref{twoto} for the details).
Here the computation of the homotopy group $\pi_2(BX)$ follows from computing $ H_2(BX )$ and $H_3^{\gr}( \pi_1 (BX)) $, as in Theorem \ref{introthm}. 
In fact, for this computation, another paper \cite{Nos3} gave an algorithm to determine the group $\pi_1(BX)$ and the third homology $H_3^{\rm gr}(\pi_1(BX))$
up to $t_X$-torsion.


Finally, we emphasize two applications of our study on $\pi_2(BX)$.
First, our approach to $\pi_2(BX)$ involves a new method for computing the {\it third} homology $H_3(BX)$,
and establishes a relation between third {\it quandle} homology and {\it group} homology.
As a general result, with respect to a finite connected quandle $X$, we solve some torsion subgroups of $H_3(BX) $ in terms of group homology (Theorem \ref{thm3homology}).
In application, as examples,
we compute most torsions of the third homology groups $H_3(BX) $ of the regular Alexander quandles,
the symplectic quandles and spherical quandles over $\F_q $ in a stable range (see Sections \ref{s87Al} and \ref{s87asl}).
Here, we should note that
most of known methods for computing $H_3(BX)$ was a result of Mochizuki \cite{Moc2}, where
$X$ is an Alexander quandles on $\F_q$.
Although his presentation of $H^3(BX;\F_q) $ was a little complicated,
our result implies that the complexity of $H^3(BX;\F_q) $ stems from that of $ H_3^{\gr}( \pi_1 (BX)) $.


On the other hand, our theorem suggests an approach for computing the Dijkgraaf-Witten invariant.
In contract to the simple formula \eqref{DWDWder9}, it is not so easy to compute this invariant exactly.
Actually, most known computations of the invariants are in the cases where $G$ are abelian (see, e.g., \cite{DW,Kab,HN}).
However, as mentioned above, the TH-map $\Theta_X \oplus \mathcal{H}_X$ implies that
we can deal with some $\Z$-equivariant parts of this invariants.
In fact, in the subsequent paper \cite{Nos4} via the quandle cocycle invariants, we will compute the invariants of some knots using Alexander quandles $X$ over $\F_q$,
whose $\pi_1(BX)$ are nilpotent. 
As a result, triple Massey products of some Brieskorn manifolds $\Sigma(n,m,l)$ will be calculated; see \cite[\S 5]{Nos4}.



This paper is organized as follows.
Section \ref{ss1} reviews the quandle homotopy invariant, and
Section \ref{ss151} states our results.
Section \ref{SS3j3outjlw} constructs the homomorphism $\Theta_X$, and 
Section \ref{SS3j3} reviews the second quandle homology.
Section \ref{ss2se} proves the Theorem \ref{introthm}. %
Section \ref{twoto} computes $\pi_2(BX)$ of some quandles, as examples.
Section \ref{s87as} is devoted to computing the third homology $H_3(BX)$. 

\vskip 0.7pc
\noindent
{\bf Conventional notation.}
Throughout this paper, most homology groups are with (trivial) integral coefficients; so we often omit writing coefficients, e.g., $H_n(X)$.
We write $H_n^{\rm gr}(G)$ for the group homology of a group $G$.
Furthermore, we denote a $\Z$-module $M $ localized at a prime $\ell$ by $M_{(\ell)}$.
Moreover, a homomorphism $f:A \ra B$ between abelian groups is said to be
{\it a $[1/N]$-isomorphism}, denoted by $ f: A \cong_{[1/N]} B $,
if the localization of $f$ at $\ell$ is an isomorphism for any prime $\ell$ that does not divide $N$.
In addition, we assume that every manifolds are in $C^{\infty}$-class and oriented,
and that any fields is not of characteristic $2$. 

\vskip 0.7pc
\baselineskip=16pt
\section{Review of quandles and the quandle homotopy invariants}\label{ss1}
To establish our results in \S \ref{ss151}, we will review quandles in \S \ref{s3s3w},
and quandle homotopy invariants of links in \S \ref{reqhi}.
\subsection{Review of quandles}\label{s3s3w}

A {\it quandle} is a set, $X$, with a binary operation $\tri : X \times X \ra X$ such that
\begin{enumerate}[(i)]
\item The identity $ a \tri a = a $ holds for any $a \in X $.
\item The map $ (\bullet \tri a ): \ X \ra X$ defined by $x \mapsto x \tri a $ is bijective for any $a \in X$.
\item The identity $(a\tri b)\tri c=(a\tri c)\tri (b\tri c)$ holds for any $a,b,c \in X. $
\end{enumerate}
\noindent
We refer the reader to \S \ref{twoto} for some examples of quandles.
A quandle $X$ is said to be {\it of type $t_X $}, if $t_X >0$ is the minimal $N$ such that $x= x \lhd^N y $ for any $x,y \in X$,
where we denote by $\bullet \lhd^N y$ the $N $-times on the right operation with $y$.
Note that, if $X$ is of finite order, it is of type $t_X $ for some $t_X \in \Z$.
A map $f: X \ra Y$ between quandles is {\it a (quandle) homomorphism}, if $f(a\tri b)=f(a)\tri f(b)$ for any $a,b \in X$.


Next, we review {\it the associated group} \cite{FRS1}, denoted by $\As (X)$.
This group is 
is defined by the group presentation
$$ \As(X)= \langle \ e_x \ \ (x \in X )\ \ | \ \ e_{x\tri y}^{-1} \cdot e_y^{-1} \cdot e_x \cdot e_y \ \ (x,y \in X)\ \ \rangle.$$
We fix an action $\As(X)$ on $X$ defined by $ x \cdot e_y:= x \lhd y$ for $x,y \in X$.
Note the equality
\begin{equation}\label{hasz2} e_{x \cdot g} = g^{-1} e_x g \in \As (X) \ \ \ \ \ \ \ (x \in X , \ \ g \in \As (X) ), \end{equation}
by definitions.
The orbits of the above action of $\As(X)$ on $X$ are called {\it connected components of $X$}, denoted by $\mathrm{O}(X).$
If the action of $\As(X)$ on $X$ is transitive, $X$ is said to be {\it connected}.

Note that the group $\As(X)$ is of infinite order. Actually, there is a splitting epimorphism
\begin{equation}\label{hasz} \varepsilon_X : \As (X) \lra \Z \end{equation}
which sends each generators $e_x $ to $1 \in \Z$. Furthermore, if $X$ is connected,
this $\varepsilon_X$ gives the abelianization $\As (X)_{\mathrm{ab}} \cong \Z$ because of \eqref{hasz2}. The reader should keep in mind this epimorphism $\varepsilon_X$.


\subsection{Review; Quandle homotopy invariant of links.}\label{reqhi}
We begin reviewing $X$-colorings.
Let $X$ be a quandle, and $D$ an oriented link diagram of a link $L\subset S^3.$
An $X$-{\it coloring} of $D$
is a map $\CC: \{ \mbox{arcs of $D$} \} \to X$
such that $ \CC(\gamma_k)= \CC(\gamma_i) \lhd \CC(\gamma_j)$ at any crossing of $D$ such as Figure \ref{koutenpn}.
Let $\mathrm{Col}_X(D) $ denote the set of all $X$-colorings of $D$.
As is well known, if two diagrams $D_1$ and $D_2$ are related by Reidemeister moves,
we easily obtain a canonical bijection $\mathrm{Col}_X(D_1) \simeq \mathrm{Col}_X(D_2) $; see, e.g., \cite{Joy,CJKLS}.

\begin{figure}[htpb]
\begin{center}
\begin{picture}(100,60)
\put(-66,50){\large $\gamma_i $}
\put(-13,50){\large $\gamma_j $}
\put(-13,22){\large $\gamma_k $}

\put(-66,33){\pc{kouten4}{0.34}}

\put(89,50){\large $\gamma_j $}
\put(141,50){\large $\gamma_k $}
\put(89,22){\large $\gamma_i $}
\end{picture}
\end{center}
\vskip -0.95pc
\vskip -0.95pc
\caption{\label{koutenpn} Positive and negative crossings.}
\end{figure}
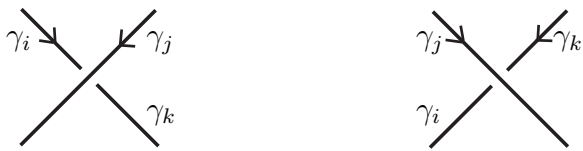

We now study a topological meaning of
an $X$-coloring $\CC $ of $D$.
Let us correspondence each arc $\gamma $ to the generator $\Gamma_{\CC}(\gamma_i):= e_{\CC(\gamma_i)} \in \As (X)$,
which defines a group homomorphism
\begin{equation}\label{Gammacdef} \Gamma_{\mathcal{C}}: \pi_1(S^3 \setminus L)\lra \mathrm{As}(X) \end{equation}
by Wirtinger presentation. The reader should keep this map $\Gamma_{\mathcal{C}} $ in mind, for the sake of later discussion.
Then, we have a map $ \col_X (D) \ra \Hom ( \pi_1(S^3 \setminus L), \mathrm{As}(X))$ which carries $\mathcal{C}$ to $\Gamma_{\mathcal{C}}$, leading to a topological meaning of the set $\mathrm{Col}_X(D) $ as follows:
\begin{prop}[cf. {\cite[Lemma 3.14]{Eis2}} in the knot case]\label{prop.homcol}Let $X$ be a quandle. 
Let $D$ be a diagram of an oriented link $L$.
We fix a meridian-longitude pair $(\mathfrak{m}_i, \ \mathfrak{l}_i) \in \pi_1(S^3 \setminus L)$ of each link-component which is compatible with the orientation.
Then, the previous map which sends $\mathcal{C} $ to $\Gamma_{\mathcal{C}}$ gives rise to a bijection between the
$\mathrm{Col}_X(D)$ and a set
\begin{equation}\label{1v4}
\{ (x_1, \dots, x_{\# L }, f ) \in X^{\# L} \times \mathrm{Hom}_{\rm gr}(\pi_1(S^3 \setminus L), \As(X) )\ | \ f(\mathfrak{m}_i ) =e_{x_i}, \ \ x_i \cdot f(\mathfrak{l}_i) =x_i \} .
\end{equation}
\end{prop}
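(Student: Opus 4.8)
The statement is a refinement of the Wirtinger picture for $X$-colorings, so the natural strategy is to make the correspondence $\mathcal{C}\mapsto\Gamma_{\mathcal{C}}$ explicit at the level of arcs and meridians, and then check that the longitude condition corresponds exactly to the quandle-colouring condition on the arc running parallel to a longitude. First I would recall that a Wirtinger presentation of $\pi_1(S^3\setminus L)$ from the diagram $D$ has one generator $\mathfrak{m}_\gamma$ per arc $\gamma$ (a meridian of that arc) and one relation per crossing of exactly the form $\mathfrak{m}_k=\mathfrak{m}_j^{-1}\mathfrak{m}_i\mathfrak{m}_j$ (reading off Figure \ref{koutenpn}). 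Comparing with the defining relations $e_{x\tri y}=e_y^{-1}e_x e_y$ of $\As(X)$ (which is \eqref{hasz2} for $g=e_y$), one sees that assigning $\mathfrak{m}_{\gamma_i}\mapsto e_{\mathcal{C}(\gamma_i)}$ respects every crossing relation precisely because $\mathcal{C}$ satisfies $\mathcal{C}(\gamma_k)=\mathcal{C}(\gamma_i)\lhd\mathcal{C}(\gamma_j)$; hence $\Gamma_{\mathcal{C}}$ in \eqref{Gammacdef} is well defined, and conversely a homomorphism $f$ sending each arc-meridian to some $e_{x}$ with $x\in X$ forces the target values to obey the colouring rule. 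This already gives a bijection between $\col_X(D)$ and the set of homomorphisms $f$ that send every arc-meridian into $\{e_x:x\in X\}$.

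\textbf{Key steps.} The remaining work is to repackage "$f$ sends every arc-meridian into $\{e_x\}$" as the two conditions in \eqref{1v4}. For each link component $L_i$, fix the basepoint-connecting path defining $(\mathfrak{m}_i,\mathfrak{l}_i)$ so that $\mathfrak{m}_i$ is conjugate (in a controlled way) to the meridian of one specific arc $\gamma^{(i)}$ of $D$ lying on $L_i$; set $x_i:=\mathcal{C}(\gamma^{(i)})$, so $f(\mathfrak{m}_i)=e_{x_i}$ after absorbing the conjugation, which is the first clause of \eqref{1v4}. For the second clause I would use the standard fact that, travelling along the $i$-th component, the Wirtinger longitude $\mathfrak{l}_i$ is the product of the (signed) meridians of the arcs one crosses under, conjugated appropriately; applying $f$ and using \eqref{hasz2} repeatedly, the element $x_i\cdot f(\mathfrak{l}_i)$ is exactly the result of successively operating $\lhd$ on $x_i$ by the colours of those under-arcs — i.e. $x_i\cdot f(\mathfrak{l}_i)=\mathcal{C}(\gamma')$ where $\gamma'$ is the arc of $L_i$ containing the basepoint-end after one full traversal. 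Since $D$ is a closed diagram, $\gamma'=\gamma^{(i)}$, so $x_i\cdot f(\mathfrak{l}_i)=x_i$ automatically holds for any $X$-coloring; conversely, given $(x_1,\dots,x_{\#L},f)$ in the displayed set, the condition $f(\mathfrak{m}_i)=e_{x_i}$ together with the crossing relations lets one reconstruct a unique $X$-coloring $\mathcal{C}$ with $\Gamma_{\mathcal{C}}=f$ (up to the fixed conjugations), and the two maps are mutually inverse. Invariance under Reidemeister moves is inherited from the already-known bijection $\col_X(D_1)\simeq\col_X(D_2)$, so no separate check is needed. Finally I would note the bijection is natural in the meridian-longitude choices, and that for knots it recovers \cite[Lemma 3.14]{Eis2}.

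\textbf{Main obstacle.} The genuinely delicate point is bookkeeping the basepoint and the connecting paths: $\mathfrak{m}_i$ and $\mathfrak{l}_i$ are elements of $\pi_1(S^3\setminus L)$ based at a single fixed point, whereas the arc-meridians in the Wirtinger presentation each carry their own implicit connecting path, so the identifications $f(\mathfrak{m}_i)=e_{x_i}$ and the longitude computation only hold after conjugating by a definite word $w_i\in\As(X)$; one must check that the condition $x_i\cdot f(\mathfrak{l}_i)=x_i$ is insensitive to this conjugation (it is, since conjugating $\mathfrak{l}_i$ by $w_i$ replaces $x_i$ by $x_i\cdot\bar{w}_i$ and the fixed-point equation is equivariant), and that the count of solutions is unchanged. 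The other mild subtlety is the orientation/sign conventions at positive versus negative crossings in Figure \ref{koutenpn}, which must be matched with the left-versus-right conventions in $x\lhd y=y+T(x-y)$-type formulas and in \eqref{hasz2}; once these conventions are pinned down the argument is routine diagram-chasing. I expect no conceptual difficulty beyond this, since everything reduces to the compatibility of the Wirtinger relation with \eqref{hasz2}.
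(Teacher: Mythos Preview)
Your overall strategy---matching Wirtinger relations against the defining relations of $\As(X)$ and reading the longitude as a product of under-crossing meridians---is exactly the paper's. However, your intermediate claim that $\col_X(D)$ is in bijection with $\{f : f(\text{each arc-meridian})\in\{e_x:x\in X\}\}$ is false in general: the map $X\to\As(X)$, $x\mapsto e_x$, need not be injective, so distinct colorings can yield the same $\Gamma_{\mathcal{C}}$, and conversely an $f$ landing in $\{e_x\}$ does not single out elements of $X$ obeying the quandle rule (only elements of $\As(X)$ obeying its conjugation relation). The tuple $(x_1,\dots,x_{\#L})$ in \eqref{1v4} is there precisely to repair this, and the paper isolates the injective case separately as Corollary~\ref{prop.homcol2}.

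The paper's fix, which your sketch does not carry out, is to build the inverse map directly without passing through that intermediate set. Given $(x_1,\dots,x_{\#L},f)$ in \eqref{1v4}, one walks along the $i$-th component from the chosen arc and \emph{defines} $\mathcal{C}(\alpha_k):=x_i\cdot f(\beta_1^{\epsilon_1})\cdots f(\beta_{k-1}^{\epsilon_{k-1}})\in X$ via the $\As(X)$-action on $X$. Here the longitude condition $x_i\cdot f(\mathfrak{l}_i)=x_i$ is not a redundancy but the hypothesis that makes this assignment close up after a full traversal (so that $\mathcal{C}(\alpha_{N_i})=\mathcal{C}(\alpha_0)$); you invoke it only as an automatic consequence in the forward direction and never use it in the reverse. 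One then checks $e_{\mathcal{C}(\alpha_k)}=f(\alpha_k)$ from \eqref{hasz2}, whence the coloring rule at each crossing follows. Your basepoint and conjugation worries dissolve once $\mathfrak{m}_i$ is taken to be the Wirtinger generator of a fixed arc on each component, which is what the paper does.
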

\noindent
This proof will appear in Appendix \ref{ds1epr23}, although it is not so difficult.

Next, we briefly recall the quandle homotopy invariant of links (our formula is a modification the formula in \cite{FRS1}).
Let us consider the set, $\Pi_2(X)$, of all $X$-colorings of all diagrams subject to Reidemeister moves and
{\it the concordance relations} illustrated in Figure \ref{pic.21...}.
Then, disjoint unions of $X$-colorings make $\Pi_2(X)$ into an abelian group, which is closely related to a homotopy group $\pi_2(BX)$; see \eqref{bxbx}.
For any link diagram $D$, we have a map $ \Xi_{X,D} : \col_X(D) \ra \Pi_2(X)$ taking $\CC$ to the class $[\CC]$ in $ \Pi_2(X)$.
If $X$ is of finite order and $D$ is a diagram of a link $L$, then the {\it quandle homotopy invariant} of $L$ is defined as the expression
\begin{equation}\label{definv} \Xi_X ( L):=\sum_{\CC \in \col_X(D) } \Xi_{X,D} ( \CC) \in \Z [\Pi_2(X)]. \end{equation}
Moreover, as is known \cite{RS} (see also \cite[\S 2]{Nos1}), the homotopy invariant is universal among all ``the quandle cocycle invariants with local coefficients" (see \cite{CJKLS,CKS,CEGS, Kab} for these definitions).
Hence, to answer what the cocycle invariants are, instead, hereafter we may focus on the study of the homotopy invariant and the abelian group $\Pi_2(X)$ in details.


\begin{figure}[htpb]
$$
\begin{picture}(220,66)
\put(-62,378){\pc{concordance2}{0.56}}
\put(-31,51){\Large $a$}
\put(-31,-1){\Large $a$}
\put(101,28){\Large $a$}
\put(44.4,28){\Large $a$}
\put(201.2,47){\Large $a$}
\put(275,24){\Huge $\phi$}
\end{picture}$$
\caption{\label{pic.21...}
The concordance relations }
\end{figure}
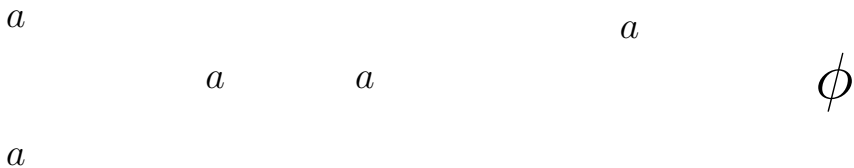

Finally, we briefly review the original quandle cocycle invariant \cite{CJKLS}. Given a finite quandle $X$, we set its quandle homology $H^Q_2 (X)$ with trivial coefficients, which is a quotient
of the free module $\Z \langle X \times X \rangle$; see \S \ref{SS3j3} for the definition.
For an X-coloring $\CC \in \mathrm{Col}_X(D)$, we consider a sum $\sum_{\tau} \epsilon_{\tau} ( \CC(\gamma_i), \CC(\gamma_j) ) \in \Z \langle X \times X \rangle $,
where $\tau$ runs over all crossing
of $D$ as shown in Figure \ref{koutenpn} and the symbol $ \epsilon_{\tau} \in \{ \pm 1 \}$ denotes the sign of the crossing $\tau$.
As is known (see \cite{RS,Nos1}), this sum is a 2-cycle, and homology classes of these sums in $H_2^Q (X)$ are independent of Reidemeister
moves and the concordance relations; Hence we obtain a homomorphism
\begin{equation}\label{h2h2}
\mathcal{H}_X : \Pi_2(X) \lra H^Q_2 (X).
\end{equation}

Using the formula (\ref{definv}), {\it the quandle cycle invariant} of a link $L$, denoted by $\Phi_X(L)$, is then
defined to be the image $\mathcal{H}_X (\Xi_X(L))$ valued in the group ring $\Z[H^Q_2 (X)]$. Namely,
\begin{equation}\label{h2h247}
\Phi_X(L):= \mathcal{H}_X (\Xi_X(L)) \in \Z[H^Q_2 (X)].
\end{equation}
As is known \cite{RS,Nos1,CKS}, given a quandle 2-cocycle $ \phi : X^2 \ra A$, the pairing between this $\phi$ and the cycle invariant
$\Phi_X (L)$ coincides with the original cocycle invariant in [CJKLS, Theorem 4.4].

Although this $\Phi_X(L)$ is constructed from link diagrams,
in \S \ref{SS3j3} we later explain its topological meaning, together with a computation of $H^Q_2 (X)$ following from Eisermann \cite{Eis1,Eis2}.

\section{Results on the quandle homotopy invariants}\label{ss151}
The purpose in this section is to state our results on the group $\Pi_2(X)$.
In \S \ref{s72w}, we will set up a homomorphism $\Theta_X$.
In \S \ref{ager} we state our results on the group $\Pi_2(X) $.
As an application, we will see the computations of third quandle homology groups in \S \ref{s87ab3e}.

\subsection{A key homomorphism $\Theta_{X}$}\label{s72w}
Before describing our main results, we will set up
a homomorphism $\Theta_X$ in Theorem \ref{keykeipp}, which plays a key role in this paper.
Furthermore we discuss Corollary \ref{corthm1a}
which proposes a necessary condition to obtain topological interpretations of
the quandle homotopy invariants. 

To describe the theorem, for any link $L \subset S^3, $
we denote by $p: \widetilde{S^3 \setminus L} \ra S^3 \setminus L$ the $t_X$-fold cyclic covering associated to the homomorphism $ \pi_1( S^3 \setminus L) \ra \Z/t_X $ sending each meridian of $L $ to $1$.
Furthermore, given an $X$-coloring $\CC$
and using the map $ \Gamma_{\CC}$ in \eqref{Gammacdef}, we consider the composite $ p_* \circ \Gamma_{\CC}:\pi_1( \widetilde{S^3 \setminus L}) \ra \As(X)$.
Then

\begin{thm}\label{keykeipp}Let $X$ be a connected quandle of type $t_X $.
Then, for any diagram $D$ of any link $L \subset S^3$, the composite $ p_* \circ \Gamma_{\CC}:\pi_1( \widetilde{S^3 \setminus L}) \ra \As(X)$ induces a group homomorphism
$$ \theta_{X,D}( \CC ) : \pi_1(\widehat{C}_{L}^{t_X} ) \lra \As(X), $$
where we denote by $\widehat{C}_{L}^{t_X}$ the $t_X $-fold cyclic covering space of $S^3$ branched over the link $L$.

Moreover, consider the pushforward of the fundamental class $[\widehat{C}_{L}^{t_X} ] $ by this $ \theta_{X,D}( \CC ) $,
that is, $ \bigl( \theta_{X,D}( \CC ) \bigr)_* ([\widehat{C}_{L}^{t_X} ]) \in H_3^{\gr}(\As(X))$.
Then, these pushforwards with running over all $X$-colorings of all diagrams $D$ yield an additive homomorphism $\Theta_X : \Pi_2(X) \ra H_3^{\gr}(\As(X))$.

\end{thm}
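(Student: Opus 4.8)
The plan is to verify the two assertions in turn: first, that the composite $p_*\circ\Gamma_{\CC}$ factors through $\pi_1(\widehat{C}_L^{t_X})$, and second, that the resulting pushforwards assemble into a well-defined additive homomorphism on $\Pi_2(X)$. For the first point, recall that $\widehat{C}_L^{t_X}$ is obtained from the cyclic cover $\widetilde{S^3\setminus L}$ by filling in the preimages of tubular neighbourhoods of the link components; by van Kampen, $\pi_1(\widehat{C}_L^{t_X})$ is the quotient of $\pi_1(\widetilde{S^3\setminus L})$ by the normal closure of the core curves of these filling solid tori. Fixing a meridian--longitude pair $(\mathfrak{m}_i,\mathfrak{l}_i)$ as in Proposition \ref{prop.homcol}, the curve to be killed in the cover over the $i$-th component is (a conjugate of) $\mathfrak{l}_i \mathfrak{m}_i^{t_X}$, or after passing to the cover where $\mathfrak{m}_i^{t_X}$ lifts to a loop, essentially $p^{-1}(\mathfrak{l}_i)$ together with $\mathfrak{m}_i^{t_X}$. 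So I must show $\Gamma_{\CC}$ sends these to $1$. Since $X$ has type $t_X$, the colouring condition $x_i\cdot f(\mathfrak{l}_i)=x_i$ from \eqref{1v4} combined with $e_x^{t_X}$ acting trivially (this is exactly where connectedness and type $t_X$ enter, via $x\lhd^{t_X}y=x$ and \eqref{hasz2}) forces $\Gamma_{\CC}$ to vanish on the relevant elements. The cleanest route is to work with the extended quandle / the reformulation in \eqref{1v4}: $\Gamma_\CC(\mathfrak{l}_i)$ lies in the stabiliser of $x_i$, and one checks directly that $p_*\Gamma_\CC$ kills the core of the $i$-th filling torus.

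For the second point, I would proceed diagrammatically, mirroring the construction of $\Xi_{X,D}$. Given a diagram $D$ with $X$-colouring $\CC$, the homomorphism $\theta_{X,D}(\CC)$ produces a class $(\theta_{X,D}(\CC))_*([\widehat{C}_L^{t_X}])\in H_3^{\gr}(\As(X))$; I need invariance under (a) Reidemeister moves, (b) the concordance relations of Figure \ref{pic.21...}, and (c) additivity under disjoint union. For (a): a Reidemeister move replaces $D$ by $D'$ with a canonical bijection $\col_X(D)\simeq\col_X(D')$, and the link $L$ is unchanged, hence so are $\widehat{C}_L^{t_X}$, its fundamental class, and (up to the identification of colourings) the homomorphism $\theta$; so the pushforward is literally unchanged. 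For (c): disjoint union of colourings corresponds, on the link side, to... actually one must be slightly careful here — the disjoint union in $\Pi_2(X)$ is of colourings of possibly different diagrams, so additivity should be read as: the map $\col_X(D)\to H_3^{\gr}(\As(X))$ extends $\Z$-linearly and is compatible with the monoid structure on $\coprod_D \col_X(D)$, which is clear since $[\widehat{C}_{L\sqcup L'}^{t_X}]$ splits as a sum. The genuinely substantive case is (b), the concordance relation.

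The main obstacle will be (b): showing that if two $X$-coloured diagrams differ by a concordance relation (the moves of Figure \ref{pic.21...}, which modify the link $L$ by a concordance in $S^3\times[0,1]$), then the two pushforward classes in $H_3^{\gr}(\As(X))$ agree. The natural strategy is bordism: a concordance $W\subset S^3\times[0,1]$ between $L_0$ and $L_1$ gives rise to the total space $\widehat{C}_W^{t_X}$ of the $t_X$-fold branched cyclic cover of $S^3\times[0,1]$ branched over $W$, which is a $4$-manifold with $\partial\widehat{C}_W^{t_X} = \widehat{C}_{L_1}^{t_X}\sqcup(-\widehat{C}_{L_0}^{t_X})$ (here one uses that $H_1$ of the complement behaves well along a concordance so the covers match up — this requires the type-$t_X$ hypothesis again and a Mayer--Vietoris / Alexander-duality check). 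An $X$-colouring extends across the concordance, giving $\Gamma$ on $\pi_1$ of the cover of the complement, which as before descends to $\pi_1(\widehat{C}_W^{t_X})\to\As(X)$; then $(\theta)_*([\widehat{C}_{L_1}^{t_X}]) - (\theta)_*([\widehat{C}_{L_0}^{t_X})]$ is the image of $\partial[\widehat{C}_W^{t_X}]$ under a map factoring through $H_3$ of the $4$-manifold, hence is zero. Making the branched-cover-of-a-concordance argument precise — in particular checking that the colouring genuinely extends (which is where the constraint that the concordance relation is a specific local move, not an arbitrary concordance, helps) and that the boundary identification is orientation-correct — is the technical heart of the proof; everything else is bookkeeping.
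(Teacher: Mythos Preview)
Your bordism approach to the second assertion (well-definedness on $\Pi_2(X)$) is essentially what the paper does in Lemma~\ref{key231}: it checks that the two concordance moves of Figure~\ref{pic.21...} produce $\As(X)$-bordant branched covers, then composes with the Thom isomorphism $\Omega_3(G)\cong H_3^{\rm gr}(G)$. Your outline there is fine.

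The genuine gap is in the first assertion. You claim that $\Gamma_{\CC}\circ p_*$ kills the cores of the filling solid tori, and you identify those cores with (conjugates of) $\mathfrak l_i\mathfrak m_i^{t_X}$. Both points are wrong. The curve bounding the meridian disk in the $i$-th filling torus is the lift $\tilde{\mathfrak m}_i$ of the meridian, and $p_*(\tilde{\mathfrak m}_i)$ is (conjugate to) $\mathfrak m_i^{t_X}$, with no longitude involved. More seriously, $\Gamma_{\CC}(\mathfrak m_i^{t_X})=e_{x_i}^{t_X}$, and this element is \emph{not} trivial in $\As(X)$: it has $\varepsilon_X$-image $t_X\neq 0$. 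The fact that $e_x^{t_X}$ acts trivially on $X$ (i.e.\ lies in $\Ker\psi_X$) does not make it $1$ in $\As(X)$, so the composite $\Gamma_{\CC}\circ p_*$ does \emph{not} literally factor through $\pi_1(\widehat C_L^{t_X})$.

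What the paper actually does (\S\ref{SS3j3outjlw}) is prove the key Lemma~\ref{daiji}: for connected $X$ of type $t_X$, the element $e_x^{t_X}$ is central in $\As(X)$ and \emph{independent of} $x$. It then writes down an explicit Reidemeister--Schreier presentation of $\pi_1(\widehat C_L^{t_X})$ and defines $\widehat\Gamma_{\CC}$ by the formula $\gamma_{i,s}\mapsto e_{\CC(\gamma_0)}^{\,s-1}e_{\CC(\gamma_i)}e_{\CC(\gamma_0)}^{-s}\in\Ker(\varepsilon_X)$; this is a correction of $\Gamma_{\CC}\circ p_*$ by powers of the fixed central element $e_a^{t_X}$, and checking the branched-cover relations for each link component is exactly where Lemma~\ref{daiji} (that all $e_x^{t_X}$ coincide) is used. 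Your instinct that ``this is where connectedness and type $t_X$ enter'' is correct, but the mechanism is this centrality lemma, not the stabilizer condition $x_i\cdot f(\mathfrak l_i)=x_i$ you invoke.
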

As is seen in the proof in \S \ref{SS3j3outjlw}, we later reconstruct these maps $\theta_{X,D }$ and $\Theta_{X}$ concretely.
By construction, they provide a commutative diagram, functorial in $X$, described as
$$
\xymatrix{ \col_X(D) \ar[rr]^{\!\!\!\!\!\!\!\!\!\!\! \theta_{X,D}} \ar[d]_{\Xi_{X,D}} & &\Hom_{\rm gr} \bigl( \pi_1( \widehat{C}_{L}^{t_X }), \As(X) \bigr) \ar[d]^{(\bullet)_* ([ \widehat{C}_{L}^{t_X} ]) } \\
\Pi_2(X) \ar[rr]^{\!\!\!\!\!\!\!\!\!\!\! \Theta_X } & & H_3^{\gr}( \As(X)) .}
$$
\begin{rem}\label{zdouhenno}
As is seen in \S \ref{SS3j3outjlw}, for any $X$-coloring $\CC \in \mathrm{Col}_X(D)$, the homomorphism
$\theta_{X,D}(\CC ) : \pi_1( \widehat{C}_L^{t_X }) \ra \mathrm{As}(X)$ factors through
the kernel $ \Ker (\varepsilon_X) \subset \As (X) $ in \eqref{hasz},
and
the covering transformation $\Z \curvearrowright \widehat{C}_L^{t_X }$ is $\Z$-equivariant to
the action $\Z \curvearrowright \Ker(\varepsilon_X)$ from the splitting \eqref{hasz}.
\end{rem}

\noindent
The map plays an important role in this paper. Thus we fix terminologies:
\begin{defn}\label{keykeippdefn}
Let $X$ be a connected quandle of type $t_X$.
We call the map $\Theta_X $ {\it T-map}, and the sum $ \Theta_X \oplus \mathcal{H}_X $ {\it TH-map}.
Here $ \mathcal{H}_X $ is the map $\mathcal{H}_X : \Pi_2(X) \ra H^Q_2 (X)$ defined in \eqref{h2h2}.
\end{defn}

To state Corollary \ref{corthm1a} below,
let us consider the image of the $\theta_{X,D}$ in Theorem \ref{keykeipp}, compared with the Dijkgraaf-Witten invariant \eqref{DWDWder9}.
Strictly speaking, when $X$ is of finite order, we define the formal sum
\begin{equation}\label{DWDWder2} \mathrm{ DW}^{\Z}_{ \As(X)}( \widehat{C}_{L}^{t_X } ) := \sum_{ \mathcal{C} \in \col_X(D)} [\Theta_X ( \Xi_{X,D}(\mathcal{C})) ] = \sum_{\mathcal{C} \in \col_X(D)} \theta_{X,D} (\mathcal{C})_* ([\widehat{C}_{L}^{t_X }]) \in \Z[H_3^{\rm gr}(\As(X))].
\end{equation}
We call it {\it a $\Z$-equivariant part of the
Dijkgraaf-Witten invariant} of branched covering spaces $\widehat{C}_L^{t_X }$.
Then, we obtain the conclude that the quandle homotopy invariant under an assumption is topologically characterized as follows:

\begin{cor}\label{corthm1a}
Let $X$, $\widehat{C}_{L}^{t_X}$, $ \Theta_X $ be as above and let $\ell \in \Z$ be a prime. Let $|X| < \infty$.
If the TH-map $\Theta_X \oplus \mathcal{H}_X : \Pi_2(X) \ra H_3^{\gr}( \As(X)) \oplus H_2^{Q}(X)$
is an isomorphism after $\ell$-localization, then the $\ell$-torsion of the quandle homotopy invariant of any link $L$ is
decomposed as
\begin{equation}\label{a}(\Theta_X \oplus \mathcal{H}_X )_{(\ell)} (\Xi_X(L))= \mathrm{ DW}^{\Z}_{ \As(X)}( \widehat{C}_{L}^{t_X } )_{(\ell)} + \Phi_X (L)_{(\ell)} \ \in \Z[\Pi_2(X)_{(\ell )}]. \end{equation}
\end{cor}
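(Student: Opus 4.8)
The plan is to deduce Corollary \ref{corthm1a} almost formally from the commutative diagram displayed just after Theorem \ref{keykeipp}, together with the definitions \eqref{definv}, \eqref{h2h2}, \eqref{h2h247}, \eqref{DWDWder2}, and the hypothesis that $(\Theta_X \oplus \mathcal{H}_X)_{(\ell)}$ is an isomorphism. First I would fix a diagram $D$ of $L$ and expand $\Xi_X(L) = \sum_{\CC \in \col_X(D)} \Xi_{X,D}(\CC) \in \Z[\Pi_2(X)]$. Applying the ring homomorphism $\Z[\Pi_2(X)] \to \Z[\Pi_2(X)_{(\ell)}]$ induced by $\Pi_2(X) \to \Pi_2(X)_{(\ell)}$ and then the isomorphism induced by $(\Theta_X \oplus \mathcal{H}_X)_{(\ell)}$, the whole computation reduces to evaluating $(\Theta_X \oplus \mathcal{H}_X)(\Xi_{X,D}(\CC))$ on each colouring $\CC$ separately and summing.

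Next I would treat the two components independently. For the $\mathcal{H}_X$-component, by definition \eqref{h2h247} we have $\sum_{\CC} \mathcal{H}_X(\Xi_{X,D}(\CC)) = \mathcal{H}_X(\Xi_X(L)) = \Phi_X(L)$ at the level of $\Z[H_2^Q(X)]$; localizing gives the $\Phi_X(L)_{(\ell)}$ summand. For the $\Theta_X$-component, I would invoke the commutative square: $\Theta_X(\Xi_{X,D}(\CC)) = (\theta_{X,D}(\CC))_*([\widehat{C}_L^{t_X}])$ in $H_3^{\gr}(\As(X))$. Summing over all $\CC \in \col_X(D)$ and comparing with \eqref{DWDWder2} yields exactly $\sum_\CC \Theta_X(\Xi_{X,D}(\CC)) = \mathrm{DW}^{\Z}_{\As(X)}(\widehat{C}_L^{t_X})$ inside $\Z[H_3^{\gr}(\As(X))]$; localizing gives the remaining summand. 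Adding the two components and transporting back through the inverse of the localized isomorphism produces \eqref{a}.

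The one genuine point that needs care — and the step I expect to be the main obstacle — is the identification of the \emph{group-ring} elements rather than just their underlying homology/homotopy classes: the left side of \eqref{a} lives in $\Z[\Pi_2(X)_{(\ell)}]$, while $\mathrm{DW}^{\Z}_{\As(X)}(\widehat{C}_L^{t_X})$ and $\Phi_X(L)$ are a priori defined in $\Z[H_3^{\gr}(\As(X))]$ and $\Z[H_2^Q(X)]$. The equality \eqref{a} is therefore really an identity after applying the ring isomorphism $\Z[\Pi_2(X)_{(\ell)}] \cong \Z[H_3^{\gr}(\As(X))_{(\ell)}] \otimes_{\Z} \Z[H_2^Q(X)_{(\ell)}]$ (or, more precisely, $\Z[(H_3^{\gr}(\As(X)) \oplus H_2^Q(X))_{(\ell)}]$) coming from $(\Theta_X \oplus \mathcal{H}_X)_{(\ell)}$; I would state this compatibility explicitly and note that, since $\Xi_X(L)$ is a sum with multiplicity one over colourings, the pushforward of the group-ring element is computed termwise by the diagram. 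A secondary bookkeeping issue is to check that diagram-independence of all three invariants ($\Xi_X$, $\Phi_X$, and $\mathrm{DW}^{\Z}_{\As(X)}$) is already established — for $\Xi_X$ and $\Phi_X$ it is recalled in \S\ref{reqhi}, and for $\mathrm{DW}^{\Z}_{\As(X)}$ it follows from the additivity of $\Theta_X$ on $\Pi_2(X)$ asserted in Theorem \ref{keykeipp} — so that \eqref{a} is a well-defined statement about $L$ rather than about $D$. Once these identifications are in place, the corollary is immediate.
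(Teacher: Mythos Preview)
Your proposal is correct and matches the paper's treatment: the paper does not give a separate proof of Corollary~\ref{corthm1a}, regarding it as an immediate consequence of the definitions \eqref{definv}, \eqref{h2h247}, \eqref{DWDWder2} and the commutative diagram following Theorem~\ref{keykeipp}, which is exactly the termwise argument you spell out. Your care with the group-ring identification (that the equality \eqref{a} is to be read through the ring isomorphism $\Z[\Pi_2(X)_{(\ell)}]\cong\Z[(H_3^{\gr}(\As(X))\oplus H_2^Q(X))_{(\ell)}]$ induced by $(\Theta_X\oplus\mathcal{H}_X)_{(\ell)}$) makes explicit a point the paper leaves tacit.
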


In conclusion, under the assumption on the TH-map, we succeed in providing a
topological interpretation of the quandle homotopy invariant $\Xi_X(L)$ as mentioned in the introduction.
Actually, the two invariants in the right hand side of \eqref{a} are topologically defined. 

\subsection{Results on the TH-maps $ \Theta_X \oplus \mathcal{H}_X $.}\label{ager}
Following Corollary \ref{corthm1a}, it is thus significant to find quandles such that the localized TH-map $(\Theta_X \oplus \mathcal{H}_X)_{(\ell)}$ are isomorphisms.
The following main theorem is a general statement. To be specific,

\begin{thm}\label{theorem}
Let $X$ be a connected quandle of type $t_X < \infty $.
If the homology $H_3^{\gr }(\As(X))$ is finitely generated (e.g., if $X$ is of finite order),
then the TH-map is a $[1/t_X]$-isomorphism.
\end{thm}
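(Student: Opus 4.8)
The plan is to analyze the Postnikov tower of $BX$ through the five-term exact sequence \eqref{kihon25}, localized at a prime $\ell$ not dividing $t_X$, and to show the transgression $\tau$ and the two maps $c_*$ all become trivial or invertible after this localization. First I would invoke the general fact, to be established in Theorem \ref{centthm}, that both maps $c_*$ in \eqref{kihon25} are annihilated by $t_X$: this uses that $X$ is connected of type $t_X$ together with the relation \eqref{hasz2} and the structure of $\As(X)$, comparing the quandle homology $H_*^Q(X)$ with the group homology $H_*^{\rm gr}(\As(X))$ via the classifying-space map $c: BX \to B\As(X)$. Once $c_*$ is killed by $t_X$, after $\ell$-localization the sequence \eqref{kihon25} collapses: the right-hand $c_*: H_2(BX)_{(\ell)} \to H_2^{\rm gr}(\As(X))_{(\ell)}$ is zero, so $\mathcal{H}_X$ is surjective onto $H_2(BX)_{(\ell)}$; and the left-hand $c_*: H_3(BX)_{(\ell)} \to H_3^{\rm gr}(\As(X))_{(\ell)}$ is zero, so the transgression $\tau: H_3^{\rm gr}(\As(X))_{(\ell)} \to \pi_2(BX)_{(\ell)}$ is injective. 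Thus after $\ell$-localization we get a short exact sequence
$$ 0 \lra H_3^{\rm gr}(\As(X))_{(\ell)} \stackrel{\tau}{\lra} \pi_2(BX)_{(\ell)} \stackrel{\mathcal{H}_X}{\lra} H_2(BX)_{(\ell)} \lra 0. $$

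Next I would produce a splitting. The key point (Proposition \ref{centthm4o1}) is that the homomorphism $\Theta_X: \pi_2(BX) \to H_3^{\rm gr}(\pi_1(BX)) = H_3^{\rm gr}(\As(X))$ constructed in Theorem \ref{keykeipp} is a one-sided inverse to $\tau$ modulo $t_X$-torsion; concretely, the composite $\Theta_X \circ \tau: H_3^{\rm gr}(\As(X)) \to H_3^{\rm gr}(\As(X))$ is multiplication by a unit (in fact, I expect it is $\pm \mathrm{id}$ or $\pm t_X^k \cdot \mathrm{id}$ up to sign, hence invertible after $\ell$-localization). This identification is the place where the knot-theoretic description of $\Theta_X$ via branched covers $\widehat{C}_L^{t_X}$ enters: one evaluates $\Theta_X$ on the image of a generating class under $\tau$ by representing that class via an explicit $X$-colored diagram and computing the pushforward of $[\widehat{C}_L^{t_X}]$ along $\theta_{X,D}(\CC)$, using the relation between the branched cover and the Postnikov data. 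Granting that $\Theta_X \circ \tau$ is invertible after $\ell$-localization, the localized sequence splits, and $\Theta_X \oplus \mathcal{H}_X: \pi_2(BX)_{(\ell)} \to H_3^{\rm gr}(\As(X))_{(\ell)} \oplus H_2(BX)_{(\ell)}$ is an isomorphism by the five lemma (or directly: it is injective since $\ker \mathcal{H}_X = \Im \tau$ and $\Theta_X$ is injective on $\Im\tau$, and surjective by a dimension/diagram chase). Since this holds for every prime $\ell \nmid t_X$, the map $\Theta_X \oplus \mathcal{H}_X$ is an isomorphism modulo $t_X$-torsion, as claimed. The finite-generation hypothesis on $H_3^{\rm gr}(\As(X))$ is needed only to ensure that ``isomorphism after every $\ell$-localization with $\ell \nmid t_X$'' is equivalent to ``isomorphism modulo $t_X$-torsion'' in the sense of the conventional notation, i.e. that there is no infinitely-divisible pathology; for $X$ of finite order this is automatic since all the groups involved are finitely generated.

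I expect the main obstacle to be the identification $\Theta_X \circ \tau \doteq \mathrm{id}$ modulo $t_X$-torsion, i.e. Proposition \ref{centthm4o1}. The transgression $\tau$ is defined purely homotopy-theoretically from the Postnikov tower of $BX$, whereas $\Theta_X$ is defined diagrammatically through branched covering spaces; matching them requires a careful geometric model. The natural strategy is to reduce to the universal example: apply the whole construction to an \emph{extended quandle} $\widetilde{X}$ (as in \S \ref{s87ab3e}), where by Theorem \ref{3hom9} one has $\pi_2(B\widetilde{X}) \cong H_3(B\widetilde{X}) \cong H_3^{\rm gr}(\As(X)) \oplus \Z$ modulo $t_X$-torsion, so that $\tau$ becomes (split) injective with a transparent image, and then track a generating homology class through an explicit $\widetilde{X}$-colored diagram of a knot whose $t_X$-fold branched cover is understood. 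The bookkeeping of orientations, signs $\epsilon_\tau$, and the $\Z$-equivariance in Remark \ref{zdouhenno} is delicate but routine once the geometric picture is fixed; the conceptual content is entirely in setting up that picture, which is why the proof is routed through extended quandles.
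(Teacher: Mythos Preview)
Your outline matches the paper's: reduce the $P$-sequence \eqref{kihon3} to a short exact sequence modulo $t_X$ via Theorem \ref{centthm}, then show $\Theta_X$ splits it by first handling the extended quandle $\X$ (Proposition \ref{centthm4o1}) and pushing down along $p: \X \to X$ using Proposition \ref{kunosan}. You are also right that the crux is Proposition \ref{centthm4o1}.

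Where you diverge is in the mechanism for that proposition. You suggest verifying $\Theta_{\X} \circ \tau \doteq \mathrm{id}$ by ``tracking a generating homology class through an explicit $\X$-colored diagram of a knot whose $t_X$-fold branched cover is understood''; but for a general $X$ no such explicit generators of $H_3^{\rm gr}(\As(\X))$ are available, so this approach does not obviously close. The paper instead constructs an auxiliary map $\Upsilon_{\X}$ from $\Pi_2(\X)$ to a \emph{relative} bordism group $\Omega_3(K(\As(\X),1), B\X_1)$, proves (Lemma \ref{mpru}) that $\Upsilon_{\X}$ is surjective after $\ell$-localization by showing abstractly that every relative-bordism class is represented by a link complement in $S^3$ and then lifting such a class to an $\X$-coloring via the bijection in Proposition \ref{prop.homcol}, and finally checks the identity $t_X \cdot (\delta_* \circ \Theta_{\X}) = t_X \cdot \Upsilon_{\X}$ by an explicit bordism between the branched cover $\widehat{C}_L^{t_X}$ and the link complement. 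The finite-generation hypothesis enters precisely here---a surjection between $\ell$-localized modules each abstractly isomorphic to the finitely generated $H_3^{\rm gr}(\As(\X))_{(\ell)}$ must be an isomorphism---not merely in the bookkeeping role you describe. Finally, be careful with your appeal to Theorem \ref{3hom9}: in the paper's logical order that theorem is a \emph{consequence} of Theorem \ref{theorem}, so citing it here is circular; what you actually need, and what the paper proves independently, is only that $H_2^Q(\X)_{(\ell)} = 0$ (Theorem \ref{propf11ti82}), which already forces $\tau$ to be an isomorphism for $\X$.
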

As a result, combining this theorem with Corollary \ref{corthm1a}, we have obtained the interpretation of some torsion of
the quandle homotopy (cocycle) invariant. Precisely,
\begin{cor}\label{thm1bts}
Let $X$ be a finite connected quandle of type $t_X $.
Then the equality \eqref{a} in Corollary \ref{corthm1a} holds for any prime $\ell $ which does not divide $t_X$.
\end{cor}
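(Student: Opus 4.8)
The plan is to deduce Corollary~\ref{thm1bts} as an immediate formal consequence of Theorem~\ref{theorem} together with Corollary~\ref{corthm1a}. Specifically, fix a finite connected quandle $X$ of type $t_X$ and a prime $\ell$ not dividing $t_X$. Since $X$ is finite, $\As(X)$ is finitely generated, and a standard spectral-sequence or direct argument shows the group homology $H_3^{\gr}(\As(X))$ is finitely generated (this is part of the hypothesis already granted in Theorem~\ref{theorem}). Hence Theorem~\ref{theorem} applies and tells us that the TH-map $\Theta_X \oplus \mathcal{H}_X : \Pi_2(X) \to H_3^{\gr}(\As(X)) \oplus H_2^Q(X)$ is an isomorphism modulo $t_X$-torsion, i.e.\ its localization at any prime not dividing $t_X$ is an isomorphism. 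In particular, for our chosen $\ell$, the localized map $(\Theta_X \oplus \mathcal{H}_X)_{(\ell)}$ is an isomorphism.

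The next step is simply to feed this into Corollary~\ref{corthm1a}. That corollary's hypothesis is precisely that $(\Theta_X \oplus \mathcal{H}_X)_{(\ell)}$ be an isomorphism; its conclusion is the decomposition
\[
(\Theta_X \oplus \mathcal{H}_X)_{(\ell)} (\Xi_X(L)) = \mathrm{DW}^{\Z}_{\As(X)}(\widehat{C}_L^{t_X})_{(\ell)} + \Phi_X(L)_{(\ell)}
\]
in $\Z[\Pi_2(X)_{(\ell)}]$, which is exactly equation~\eqref{a}. Since this holds for every prime $\ell \nmid t_X$, and since $|X| < \infty$ guarantees all the objects in sight ($\Pi_2(X)$, $H_2^Q(X)$, $H_3^{\gr}(\As(X))$) are finitely generated so that ``equality after localization at all $\ell \nmid t_X$'' is the same as ``equality modulo $t_X$-torsion,'' we obtain the stated conclusion of Corollary~\ref{thm1bts}. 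The group-ring coefficients require a word of care: the identity is an identity of elements of $\Z[\Pi_2(X)]$, and localization is applied coefficientwise; but since $\Xi_X(L)$ is a finite formal sum, no subtlety arises.

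There is essentially no obstacle in this particular deduction: Corollary~\ref{thm1bts} is a packaging statement that records the combination of the two earlier results under the explicit numerical hypothesis $\ell \nmid t_X$ made available by Theorem~\ref{theorem}. The genuine content—constructing $\Theta_X$ via branched coverings (Theorem~\ref{keykeipp}), establishing the decomposition \eqref{a} conditional on the TH-map being an isomorphism (Corollary~\ref{corthm1a}), and proving the TH-map is an isomorphism mod $t_X$-torsion through the Postnikov-tower analysis \eqref{kihon25} together with the annihilation-by-$t_X$ of the maps $c_*$ (Theorem~\ref{centthm}) and the splitting provided by $\Theta_X$ (Proposition~\ref{centthm4o1})—all lives in the proofs of those earlier statements. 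The only point worth stating explicitly in the write-up is that ``$X$ finite connected of type $t_X$'' simultaneously supplies the finiteness hypothesis of Theorem~\ref{theorem} and the $|X|<\infty$ hypothesis of Corollary~\ref{corthm1a}, so both can be invoked at once; then Corollary~\ref{thm1bts} follows by one line.
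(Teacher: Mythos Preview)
Your proposal is correct and follows exactly the paper's approach: the corollary is stated immediately after Theorem~\ref{theorem} with the preface ``combining this theorem with Corollary~\ref{corthm1a},'' and no further proof is given. The only content is to observe that finiteness of $X$ supplies both the finite-generation hypothesis of Theorem~\ref{theorem} and the $|X|<\infty$ hypothesis of Corollary~\ref{corthm1a}, which you do.
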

\noindent
Note that there are many quandles whose types $t_X $ are powers of some prime, e.g., the quandles in Examples \ref{Sympex} and \ref{sphex}.
In conclusion, for such quandles $X$, we determine most subgroups of $\pi_2(BX)$ by Theorem \ref{theorem}.
\begin{rem}\label{remm1bts}
Corollary \ref{thm1bts} is a strong generalization of
some results in \cite{Kab,HN}.
Indeed, the results dealt with only the Alexander quandle of the from $X=\Z[T^{\pm 1}]/ (2n-1,T+1)$, and
were based on peculiar properties of the quandle.
\end{rem}

We moreover address some $t_X $-torsion subgroups of $\Pi_2(X)$.
First we discuss an easy condition of vanishing of these $t_X$-torsions:
\begin{prop}\label{buta}
Let $X$ be a connected quandle of type $t_X$.
If the $t_X$-torsion part of the image $ H_3^{\gr}( \As(X)) \oplus H_2^{Q}(X) $ is zero, then that of the TH-map is also zero.
\end{prop}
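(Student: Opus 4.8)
The proposition amounts to the implication: if $H_3^{\gr}(\As(X))\oplus H_2^Q(X)$ has trivial $t_X$-torsion, then so does $\Pi_2(X)$, whence the TH-map carries no $t_X$-torsion, its source and target being then both $t_X$-torsion-free. The plan is to read this off from the Postnikov exact sequence \eqref{kihon25} for $BX$, exploiting the fact (Theorem \ref{centthm}) that the two maps $c_*$ occurring in it are annihilated by $t_X$. First I would reduce to a statement about $\pi_2(BX)$: by \eqref{bxbx} one has $\pi_2(BX)\cong\Z\oplus\Pi_2(X)$, so the $t_X$-torsion of $\Pi_2(X)$ agrees with that of $\pi_2(BX)$; and since "having no $t_X$-torsion" means having no $\ell$-torsion for every prime $\ell$ dividing $t_X$, it suffices to fix such an $\ell$ and prove that $\pi_2(BX)$ has no $\ell$-torsion. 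Using $\pi_1(BX)\cong\As(X)$ and the identifications $H_2(BX)\cong\Z\oplus H_2^Q(X)$, $H_3(BX)\cong\Z\oplus H_2^Q(X)\oplus H_3^Q(X)$, the hypothesis unpacks to the statement that $H_3^{\gr}(\As(X))$ and $H_2(BX)$ have no $\ell$-torsion.

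Next I would analyse the transgression $\tau: H_3^{\gr}(\As(X))\to\pi_2(BX)$ appearing in \eqref{kihon25}. By exactness, $\Ker(\tau)$ equals the image of $c_*: H_3(BX)\to H_3^{\gr}(\As(X))$, and by Theorem \ref{centthm} this image is killed by $t_X$; being a subgroup of the $\ell$-torsion-free group $H_3^{\gr}(\As(X))$ that is annihilated by $t_X$ (with $\ell\mid t_X$), it must be trivial. Hence $\tau$ is injective and $\Im(\tau)\cong H_3^{\gr}(\As(X))$ has no $\ell$-torsion. On the other hand, again by exactness, $\Im(\mathcal{H}_X)$ equals $\Ker(c_*: H_2(BX)\to H_2^{\gr}(\As(X)))$, a subgroup of $H_2(BX)$, which therefore also has no $\ell$-torsion. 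Thus \eqref{kihon25} restricts to a short exact sequence $0\to\Im(\tau)\to\pi_2(BX)\to\Im(\mathcal{H}_X)\to0$ whose two outer terms are $\ell$-torsion-free.

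A one-line diagram chase then finishes the proof: an element $x\in\pi_2(BX)$ of order $\ell$ would map to an element of order dividing $\ell$ in the $\ell$-torsion-free group $\Im(\mathcal{H}_X)$, hence to $0$, hence would lie in $\Im(\tau)$, forcing $x=0$. Letting $\ell$ range over the prime divisors of $t_X$ shows that $\pi_2(BX)$, and therefore $\Pi_2(X)$, has trivial $t_X$-torsion, which is the claim. The single non-formal ingredient is Theorem \ref{centthm}: this is precisely where the hypothesis on $H_3^{\gr}(\As(X))$ gets used, namely to force $\tau$ to be injective. Everything else is diagram chasing within \eqref{kihon25}, and I expect the only point requiring care is to carry out the "$t_X$-torsion" bookkeeping one prime $\ell\mid t_X$ at a time so that the extension argument applies verbatim.
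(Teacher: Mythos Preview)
Your argument is correct and follows the paper's route via the $P$-sequence \eqref{kihon3}; the paper's one-line proof simply cites \eqref{kihon3}, whereas you spell out the one nontrivial point it leaves implicit, namely that Theorem \ref{centthm} is needed to force $\Ker(\tau)=\Im(c_*)=0$ under the hypothesis (a quotient of a $t_X$-torsion-free group is not automatically $t_X$-torsion-free). One small wording slip: working at a single prime $\ell\mid t_X$, the combination ``annihilated by $t_X$'' and ``$\ell$-torsion-free ambient group'' only yields $\Im(c_*)_{(\ell)}=0$, not literal triviality; but since $\Im(c_*)$ is $t_X$-torsion this vanishes for every $\ell\mid t_X$, so $\Im(c_*)=0$ globally and your conclusion stands.
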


\noindent
As seen in \S \ref{twoto}, there are several such quandles.
In conclusion, 
we obtain a topological meaning of the quandle homotopy
(cocycle) invariants, from the viewpoint of Corollary \ref{corthm1a}.

\subsection{Application; some computations of third quandle homology}\label{s87ab3e}

\large
\baselineskip=16pt

As an application of computing the (homotopy) group $\Pi_2(X)$,
we develop a new method for computing the third quandle homology $H_3^Q(X) $; see \S \ref{SS3j3} for the definition. 

To describe our results, we briefly review
{\it the inner automorphism group}, $\mathrm{Inn}(X)$, of a quandle $X$.
Recalling the action of $\As (X)$ on $X$, we thus have a group homomorphism $\psi_X$ from $\As(X)$ to the symmetric group $\mathfrak{S}_{X}$.
The group $\mathrm{Inn}(X) $ is defined to be the image $( \subset \mathfrak{S}_{X} )$.
Hence we have a group extension
\begin{equation}\label{AI} 0 \lra \Ker (\psi_X) \lra \As (X) \xrightarrow{\ \psi_X \ } \mathrm{Inn}(X) \lra 0 \ \ \ \ \ (\mathrm{exact}).
\end{equation}
We should notice that, by the equality \eqref{hasz2}, this kernel $ \Ker (\psi_X)$ is contained in the center.
Further, as was shown \cite{Nos3} (see Theorem \ref{centthm}), if $X$ is of type $t_X $ and connected, then
there is a $[1/t_X]$-isomorphism $\Ker (\psi_X) \cong \Z \oplus H_2^{\rm gr}(\mathrm{Inn}(X))$. 

The following theorem is an estimate on the third homology $ H_3^Q(X)$; the proof will be done in \S \ref{s87as}.
\begin{thm}\label{thm3homology}
Let $X$ be a connected quandle of finite order.
Then, there is the following isomorphism up to $2 |\mathrm{Inn}(X)|/|X|$-torsion:
$$H_3^Q(X) \cong H_3^{\rm gr}(\As(X)) \oplus \bigl( \Ker (\psi_X) \wedge \Ker (\psi_X) \bigr). $$
\end{thm}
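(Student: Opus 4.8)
\textbf{Proof proposal for Theorem \ref{thm3homology}.}

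The plan is to relate the third quandle homology $H_3^Q(X)$ to the homology $H_3(BX)$ via the known splittings recorded in the footnote to Theorem \ref{introthm} (namely $H_3(BX) \cong \Z \oplus H_2^Q(X) \oplus H_3^Q(X)$), and then to analyze $H_3(BX)$ through the Postnikov tower of $BX$. By the exact sequence \eqref{kihon25}, together with Theorem \ref{centthm} (the maps $c_*$ are annihilated by $t_X$), modulo $t_X$-torsion one gets $H_3(BX)$ fitting into a short exact sequence built from $\pi_2(BX)$, $H_2(BX)$ and $H_3^{\gr}(\pi_1(BX))$. The first reduction step, therefore, is to trade $H_3^Q(X)$ for data about $\pi_2(BX) \cong \Z \oplus \Pi_2(X)$, $H_2(BX)$, and $H_3^{\gr}(\As(X))$, all modulo $t_X$-torsion. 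Using Theorem \ref{theorem} (the TH-map is an isomorphism mod $t_X$), we have $\Pi_2(X) \cong H_3^{\gr}(\As(X)) \oplus H_2^Q(X)$ mod $t_X$, so the remaining unknown is essentially the "wedge" contribution.

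Next I would identify the cokernel of the Hurewicz map at the top of the Postnikov analysis with a wedge square. Concretely, for a connected space $Y$ with $\pi_1 = G$, $\pi_2 = \Pi$, the contribution of $\pi_2$ to $H_3(Y)$ beyond the transgression image is governed by $\Gamma(\Pi)$ (Whitehead's quadratic functor) or, after inverting $2$, by $\Pi \wedge \Pi$ together with a symmetric-square piece; since we are allowed $2|\mathrm{Inn}(X)|/|X|$-torsion and the relevant index $|\mathrm{Inn}(X)|/|X| = |\Ker(\psi_X)/Z|$-type factor absorbs the discrepancy between $\Pi_2(X)$ and $\Ker(\psi_X)$, one can replace $\Pi_2(X) \wedge \Pi_2(X)$ by $\Ker(\psi_X) \wedge \Ker(\psi_X)$. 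Here I would invoke the extension \eqref{AI}, the centrality of $\Ker(\psi_X)$, and Corollary \ref{cccor1} ($\Ker(\psi_X) \cong \Z \oplus H_2^{\gr}(\mathrm{Inn}(X))$ mod $t_X$) to match terms; the factor $|\mathrm{Inn}(X)|/|X|$ enters because the orbit-stabilizer relation for the transitive $\As(X)$-action on $X$ controls the difference between working with $\As(X)$-data and $\mathrm{Inn}(X)$-data.

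Then I would assemble the pieces: after localizing away from $2|\mathrm{Inn}(X)|/|X|$, the Postnikov sequence splits, $H_3(BX)$ becomes $\Z \oplus H_2^Q(X) \oplus H_3^{\gr}(\As(X)) \oplus \bigl(\Ker(\psi_X)\wedge\Ker(\psi_X)\bigr)$, and stripping off the $\Z \oplus H_2^Q(X)$ via $H_3(BX) \cong \Z \oplus H_2^Q(X) \oplus H_3^Q(X)$ yields the claimed isomorphism for $H_3^Q(X)$. The main obstacle I anticipate is precisely the bookkeeping at the prime-power factors dividing $|\mathrm{Inn}(X)|/|X|$ and $2$: controlling the quadratic/Whitehead-$\Gamma$ versus wedge-square discrepancy, making the splitting of \eqref{kihon25} actually exist (not just the associated graded), and verifying that all the error terms are genuinely annihilated by $2|\mathrm{Inn}(X)|/|X|$ rather than a larger integer. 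A secondary subtlety is keeping the $\Z$ summands (from $\pi_2(BX) = \Z \oplus \Pi_2(X)$ and from $\Ker(\psi_X) = \Z \oplus \cdots$) correctly accounted for so that the wedge term comes out as $\Ker(\psi_X)\wedge\Ker(\psi_X)$ and not a shifted variant.
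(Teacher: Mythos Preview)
Your approach has a genuine gap: the wedge term $\Ker(\psi_X)\wedge\Ker(\psi_X)$ simply does not arise from the Postnikov tower of $BX$. The sequence \eqref{kihon25} only tells you about the map $H_3(BX)\to H_3^{\gr}(\As(X))$, not about all of $H_3(BX)$; and in the Serre spectral sequence of $\widetilde{BX}\to BX\to K(\As(X),1)$ the graded pieces of $H_3(BX)$ come from $H_3^{\gr}(\As(X))$, from $H_1^{\gr}(\As(X);\pi_2)$, and from $H_3(\widetilde{BX})$, none of which is $\Ker(\psi_X)\wedge\Ker(\psi_X)$. Your appeal to Whitehead's quadratic functor is misplaced in degree: $\Gamma(\pi_2)$ governs $H_4$ of a simply connected $2$-type, not $H_3$. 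And the step ``replace $\Pi_2(X)\wedge\Pi_2(X)$ by $\Ker(\psi_X)\wedge\Ker(\psi_X)$'' has no justification; $\Ker(\psi_X)$ is the fundamental group of a \emph{covering space} of $BX$, not an invariant visible in the Postnikov tower of $BX$ itself, and there is no orbit--stabilizer mechanism that converts a wedge of $\pi_2$ into a wedge of $\Ker(\psi_X)$.

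The paper's argument is structurally different, and the difference is exactly the missing idea. One first uses the degree shift $H_2(B(X,X))\cong H_3^R(X)\cong \Z\oplus H_2^Q(X)\oplus H_3^Q(X)$ (Remark~\ref{hayano} and \eqref{eq.H3RQ}), so the problem becomes computing a \emph{second} homology group. One then passes to the finite covering $B(X,\mathrm{Inn}(X))\to B(X,X)$ of degree $|\mathrm{Inn}(X)|/|X|$; the transfer gives $H_2(B(X,\mathrm{Inn}(X)))_{(\ell)}\cong H_2(B(X,X))_{(\ell)}$ for $\ell\nmid |\mathrm{Inn}(X)|/|X|$ (see \eqref{relezafk}). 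On this covering $\pi_1\cong\Ker(\psi_X)$, so the cokernel of the Hurewicz map $\pi_2(BX)\to H_2(B(X,\mathrm{Inn}(X)))$ is $H_2^{\gr}(\Ker(\psi_X))=\Ker(\psi_X)\wedge\Ker(\psi_X)$. Finally, Clauwens' topological monoid structure on $B(X,\mathrm{Inn}(X))$ forces its second $k$-invariant to be $2$-torsion, so the Hurewicz map splits after inverting $2$ (Lemma~\ref{clamonoid}). Combining with Theorem~\ref{theorem} for $\pi_2(BX)$ and cancelling the common $\Z\oplus H_2^Q(X)$ gives the statement. This is where the factors $|\mathrm{Inn}(X)|/|X|$ (transfer) and $2$ (monoid $k$-invariant) honestly enter; in your outline they appear only as unexplained fudge factors.
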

\noindent Here note from \cite[Lemma 3.7]{Nos3} that the type $t_X $ divides the order $|\mathrm{Inn}(X)|/|X| $.
In summary, many torsion subgroups of the third quandle homology are
determined after computing the group homologies of $\As(X)$ and $ \mathrm{Inn}(X)$.
\begin{rem}\label{thm3homologyre}
The isomorphism does not hold in the 2-torsion subgroup.
See Remark \ref{thmqq41} for a counterexample.
Furthermore, as mentioned in the introduction, the most known results on the third $H_3^Q(X)$
are with respect to the Alexander quandles over $\mathbb{F}_q$ and are due to Mochizuki \cite{Moc2}.
So this theorem is expected to be applicable to other quandles.
\end{rem}


In a subsequent paper \cite{Nos4}, this theorem on the quandle $\X$ will be used to understand the T-map $\Theta_X$
from a viewpoint of complexes of groups and quandles.

\section{The homomorphism $\Theta_X $}\label{SS3j3outjlw}

\baselineskip=16pt

From now on, we will prove the results mentioned in the previous section.

Our purpose in this section is to prove Theorem \ref{keykeipp} and, is to construct a homomorphism from $\Pi_2(X)$ to a bordism group (Lemma \ref{key231}),
which plays a key role in this paper.
The construction is a modification of a certain map in \cite[\S 4,5]{HN}.

For the purpose, we first describe a presentation of the fundamental group $\pi_1(\widehat{C}_{L}^{t}) $,
where $\widehat{C}_{L}^{t}$ denotes the $t $-fold cyclic covering of $S^3$ branched along a link $L$.
Put a link diagram $D$ of $L$.
Let $\gamma_0, \dots, \gamma_n $ be the arcs of this $D$.
Let $ \widetilde{S^3 \setminus L}$ be the $t$-fold cyclic covering space of $ S^3 \setminus L$ associated to
the homomorphism $ \pi_1( S^3 \setminus L)\ra \Z /t $ which sends each $\gamma_i$ to $1$.
For any index $s \in \Z/t $, we take a copy $\gamma_{i, s}$ of the arc $\gamma_i$.
Then, by Reidemeister-Schreier method (see, e.g., \cite[Appendix A]{Rolfsen} and \cite[\S 3]{Kab}), the fundamental group $\pi_1( \widetilde{S^3 \setminus L})$ can be presented by

$$
\begin{array}{ll}
\mathrm{generators:} &\quad \ \gamma_{i,s} \ \ \ (0 \leq i \leq n, \ s \in \Z), \\
&\quad \\
\mathrm{relations:} &\quad \ \gamma_{k,s}= \gamma_{j,s-1}^{-1} \gamma_{i,s-1}\gamma_{j,s} \ \mathrm{for \ each \ crossings \ such \ as \ Figure\ \ref{koutenpn} \ and }, \\
&\quad \ \gamma_{0,0}= \gamma_{0,1}= \cdots = \gamma_{0,t-2}= 1 .
\end{array}$$
Further we can define the inclusion $p_* : \pi_1(\widetilde{S^3 \setminus L}) \hookrightarrow \pi_1(S^3 \setminus L) $ by $\iota (\gamma_{i,s})= \gamma_0^{s-1} \gamma_i \gamma_0^{-s}$,
with a choice of an appropriate base point.
Moreover, the fundamental group $\pi_1(\widehat{C}_{L}^{t})$ is obtained from this presentation by adding the relation $ \gamma_{0,t-1}= 1 $.

Next, given a quandle $X$ of type $t $, we now construct a map \eqref{sanatana} below. 
By the above presentation of $\pi_1(\widehat{C}_{L}^{t}) $, the map $\Gamma_{\mathcal{C}}$
induces a homomorphism $ \widehat{\Gamma}_{\mathcal{C}} : \pi_1(\widehat{C}_{L}^{t})\ra \Ker (\varepsilon_X )$,
where $\varepsilon_X : \As (X) \ra \Z$ is the homomorphism which sends the generators $e_x $ to $1 \in \Z$ [Recall \eqref{hasz}]. 
Precisely,
this $\widehat{\Gamma}_{\mathcal{C}}$ is defined by the formula
$\widehat{\Gamma}_{\mathcal{C}}(\gamma_{i,s})= e_{\CC(\gamma_0)}^{s-1}e_{\CC(\gamma_i)}e_{ \CC(\gamma_0)}^{-s} $ (we use \cite[Lemma 3.5]{Nos3} for the well-definedness).
In summary, we obtain the map
stated in Theorem \ref{keykeipp}:
\begin{equation}\label{sanatana}\theta_{X,D} : \ \mathrm{Col}_{X}(D) \lra
\Hom_{\mathrm{gr}} ( \pi_1(\widehat{C}_{L}^{t}), \Ker (\varepsilon_X ) ), \ \ \ \ \ (\mathcal{C} \longmapsto \widehat{\Gamma}_{\mathcal{C}}). \end{equation}
We here remark that this map depends on the choice of the arc $\gamma_0$; however it does not up to conjugacy of $ \Ker (\varepsilon_X ) $ by construction,
if $X$ is connected.

Finally, in order to state Lemma \ref{key231} below, we briefly recall the {\it oriented bordism group}, $\Omega_n(G)$, of
a group $G$.
Let us consider a pair consisting of a closed connected oriented $n$-manifold $M$ without boundary and a homomorphism $\pi_1(M) \ra G$.
Then the set, $\Omega_n(G)$, is defined to be the quotient set of such pairs ($M $, $\pi_1(M) \ra G$) subject to $G$-bordant equivalence.
Here, such two pairs ($M_i$, $f_i: \pi_1(M_i) \ra G$) are $G$-{\it bordant}, if
there exist an oriented $(n+1)$-manifold $W$ which bounds the connected sum $M_1 \# (- M_2)$ and a homomorphism $\bar{f}:\pi_1 (W) \ra G$ so that
$ f_1 * f_2 = \bar{f} \circ (i_W)_* $, where $i_W: M_1 \# (- M_2) \ra W $ is the natural inclusion.
An abelian group structure is imposed on $\Omega_n(G)$
by connected sum. Note that this $\Omega_n(G)$ agrees with the usual oriented ($SO$-)bordism group of the Eilenberg-MacLane space $K(G,1)$.

\begin{lem}\label{key231}
Let $X$ be a connected quandle of type $t $.
Then, by considering all link diagrams $D$, the maps $\theta_{X,D}$ in \eqref{sanatana} give rise to an additive homomorphism
\begin{equation}\label{sanatana2}\Theta_{\Pi \Omega}:\Pi_2(X) \lra \Omega_3( \mathrm{Ker}(\varepsilon_X )).\end{equation} \end{lem}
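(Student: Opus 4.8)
\textbf{Proof plan for Lemma \ref{key231}.}

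The plan is to show that the assignment $\CC \mapsto \bigl(\widehat{C}_L^{t}, \widehat{\Gamma}_{\CC}\bigr)$, where $\widehat{C}_L^{t}$ is the $t$-fold cyclic branched cover of $(S^3,L)$ equipped with the homomorphism $\widehat{\Gamma}_{\CC}:\pi_1(\widehat{C}_L^{t})\to\Ker(\varepsilon_X)$ constructed in \eqref{sanatana}, descends to a well-defined additive map $\Theta_{\Pi\Omega}:\Pi_2(X)\to\Omega_3(\Ker(\varepsilon_X))$. First I would fix, for each link diagram $D$, the preferred arc $\gamma_0$ and record that $\widehat{\Gamma}_{\CC}$ is well defined up to conjugacy in $\Ker(\varepsilon_X)$; since $\Omega_3(G)$ only sees $\pi_1(M)\to G$ up to conjugacy, the class $[\widehat{C}_L^{t},\widehat{\Gamma}_{\CC}]\in\Omega_3(\Ker(\varepsilon_X))$ is unambiguous. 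Next I would define $\Theta_{\Pi\Omega}$ on the level of formal sums of colored diagrams and then verify invariance under the equivalence relations defining $\Pi_2(X)$: (a) Reidemeister moves, (b) the concordance relations of Figure \ref{pic.21...}, and (c) additivity under disjoint union. Point (c) is immediate because $\widehat{C}_{L_1\sqcup L_2}^{t}$ is built from a connected sum along the relevant spheres and bordism class adds under connected sum, matching the group operation on $\Omega_3$.

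The technical core is invariance under (a) and (b). For Reidemeister moves, the bijection $\Col_X(D_1)\simeq\Col_X(D_2)$ of \S\ref{reqhi} is compatible with the Reidemeister–Schreier presentations of $\pi_1(\widehat{C}_L^{t})$ given above: a Reidemeister move changes the presentation only by Tietze transformations (adding/removing a generator together with a defining relation in each sheet $s\in\Z/t$), so the target branched cover and the induced homomorphism $\widehat{\Gamma}_{\CC}$ change by a homeomorphism and an isomorphism, hence give the same bordism class. I would spell this out for the three move types, using Lemma \ref{daiji} to see that the ``missing'' relation $\gamma_{0,t-1}=1$ interacts correctly with the others (this is exactly where the hypothesis that $X$ is of type $t$ enters, so that $\Gamma_{\CC}$ kills $(e_{\CC(\gamma_0)})^t$ and $\widehat{\Gamma}_{\CC}$ genuinely lands in $\Ker(\varepsilon_X)$). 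For the concordance relations, I would exhibit an explicit $4$-manifold bordism: a concordance between colored diagrams yields, after taking $t$-fold branched covers of $S^3\times[0,1]$ branched over the concordance annuli, a $4$-manifold $W$ with $\partial W=\widehat{C}_{L_1}^{t}\#(-\widehat{C}_{L_2}^{t})$ (up to handle cancellation coming from the birth/death and saddle moves in Figure \ref{pic.21...}), together with an extension $\bar f:\pi_1(W)\to\Ker(\varepsilon_X)$ of $\widehat{\Gamma}_{\CC_1}*\widehat{\Gamma}_{\CC_2}$ built from the colorings of the cobounding surface diagram. This realizes the two pairs as $\Ker(\varepsilon_X)$-bordant, so they agree in $\Omega_3$.

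I expect the main obstacle to be the precise verification that the branched cover of the concordance cobordism has the right boundary and that the coloring of the surface diagram extends $\widehat{\Gamma}_{\CC_1}$ and $\widehat{\Gamma}_{\CC_2}$ coherently on $\pi_1$ — i.e.\ the careful $4$-dimensional bookkeeping of handles produced by the elementary pieces of Figure \ref{pic.21...} (Reidemeister moves in a slice, births, deaths, saddles), and checking that the ``$\gamma_{0,t-1}=1$'' normalization is respected throughout. A clean way to organize this, which I would follow, is to avoid surface diagrams altogether: observe that $\Pi_2(X)$ is canonically a quotient of $\pi_2(BX)$ (indeed $\pi_2(BX)\cong\Z\oplus\Pi_2(X)$ as in \eqref{bxbx}), represent classes by maps $f:S^2\to BX$, use the description of $BX$ via the rack/quandle space to produce a $3$-manifold $M_f$ with a map to $B\Ker(\varepsilon_X)$ functorially, and note that homotopies of $f$ give bordisms; this reduces invariance under (a) and (b) to the single statement that the construction only depends on the homotopy class, which is then formal. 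Either route gives the additive homomorphism $\Theta_{\Pi\Omega}$; composing with the natural map $\Omega_3(G)\to H_3^{\gr}(G)$ (the Hurewicz/Atiyah–Hirzebruch edge map $\Omega_3(K(G,1))\to H_3(K(G,1))=H_3^{\gr}(G)$) then yields the map $\Theta_X$ of Theorem \ref{keykeipp}.
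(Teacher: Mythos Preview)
Your proposal is correct and follows essentially the same route as the paper: verify that the assignment $\CC\mapsto(\widehat{C}_L^t,\widehat{\Gamma}_{\CC})$ respects the concordance relations by taking the $t$-fold cyclic branched cover of $S^3\times[0,1]$ over the trace surface of each elementary move and checking that $\widehat{\Gamma}_{\CC_1}$ and $\widehat{\Gamma}_{\CC_2}$ extend over the resulting $4$-manifold.

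Two minor points of comparison. First, your careful treatment of Reidemeister moves is unnecessary: the branched cover $\widehat{C}_L^t$ and the homomorphism $\widehat{\Gamma}_{\CC}$ depend only on the underlying link and coloring (up to conjugacy), not on the diagram, so Reidemeister invariance is automatic and the paper does not discuss it. Second, the paper handles the two concordance moves of Figure~\ref{pic.21...} separately and more concretely than you do: for the birth/death move it identifies the branched cover of $S^3$ over the $2$-component unlink explicitly as $S^2\times S^1$ and observes that any $f:\pi_1(S^2\times S^1)\to\Ker(\varepsilon_X)$ extends over $B^3\times S^1$; for the saddle it builds the saddle surface $\mathcal{F}$ in a local neighborhood and takes its branched cover, exactly as you outline. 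Your bundled treatment (``branched cover of the concordance cobordism, with handle cancellation'') is equivalent but less explicit. The alternative $\pi_2(BX)$ route you sketch at the end is not the paper's approach; the paper stays entirely on the diagrammatic/bordism side and cites \cite{HN} for the analogous details.
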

\begin{proof}[A sketch of the proof]
Since the proof is analogous to \cite[Lemma 5.3 and Proposition 4.3]{HN} essentially,
we will sketch it.
To obtain the homomorphism $\Theta_{\Pi \Omega}$, it suffices to show that the maps take the concordance relations to the bordance ones.

First, to deal with the local move in the right of Figure \ref{pic.21...},
we recall that the $t $-fold cyclic covering of $S^3$ branched over the 2-component trivial link $T_2$ is $S^2 \times S^1 \ra S^3$ (see \cite[\S 10.C]{Rolfsen}).
It suffices to show that any $f: \pi_1(S^2 \times S^1) \ra \mathrm{Ker}(\varepsilon_X ) $ is $G$-bordant.
Indeed, $ f: \pi_1(B^3 \times S^1) \ra \mathrm{Ker}(\varepsilon_X ) $ provides its bordance, where $B^3$ is a ball.

Next, 
for two $X$-colorings $\CC_1$ and $\CC_2$
related by the left in Figure \ref{saddle6},
we will show that the connected sum $\theta_{X,D} (\CC_1 \# (- \CC_2)^*): \pi_1( \widehat{C}_{L_1 }^{t}\# \widehat{C}_{L_2 }^{t} ) \ra \Ker (\varepsilon_X) $ is null-bordance.
Let $N_{\mathcal{C}_i} \subset S^3$ be a neighborhood around the local move.
Then we put a canonical saddle $\mathcal{F}$ in $ N_{\CC_1} \times [0,1]$ which bounds the four arcs illustrated in Figure \ref{saddle6}.
Define an embedded surface $W \subset S^3 \times [0,1]$ to be $\bigl( (L_1 \setminus N_{\CC_1}) \times [0,1] \bigr) \cup \mathcal{F}$.
Then the $t $-fold cyclic covering $\mathcal{W} \ra S^3 \times [0,1]$ branched over $W$
bounds $\widehat{C}_{L_1 }^{t} \sqcup \widehat{C}_{L_2}^{t}$.
Moreover, we can verify that the sum $\theta_{X,D} (\CC_1 \# (- \CC_2)^*)$ extends to a group homomorphism $ \pi_1 (\mathcal{W}) \ra \Ker (\varepsilon_X)$, which gives the desired null-bordance. \end{proof}

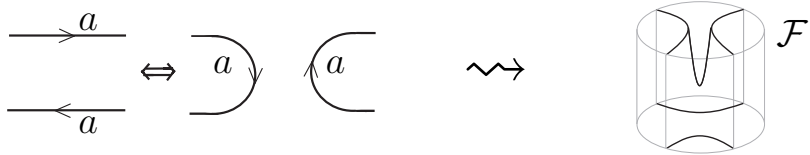
\begin{figure}[htpb]
$$
\begin{picture}(20,41)

\put(126,31){\Large $\mathcal{F}$ }

\put(-170,17){\pc{concordance3}{0.4156}}
\put(-138,37){\Large $a$}
\put(-138,-2){\Large $a$}
\put(-87,21){\Large $a$}
\put(-44.4,21){\Large $a$}

\put(7.4,16){\Huge $\rightsquigarrow $ }



\put(68,15){\pc{saddle234}{0.174}}
\end{picture}
$$
\caption{\label{saddle6} $\mathcal{F}$ is a saddle in the neighborhood $N_{C_i} \times [0,1]$.}
\end{figure}





Finally, in order to prove Theorem \ref{keykeipp}, let us review {\it Thom homomorphism} $\mathcal{T}_G: \Omega_n(G) \ra H_n(K(G,1))= H_n^{\mathrm{gr}}(G)$ obtained by assigning to every pair $( M, f: \pi_1(M)\ra G )$ the image of the orientation class
under $f_* : H_n(M) \ra H_n(K(G,1))$. It is widely known that, if $n=3$, the map $\mathcal{T}_G $ is an isomorphism $\Omega_3(G) \cong H_3^{\rm gr} (G)$.
\begin{proof}[Proof of Theorem \ref{keykeipp}]
Let $G$ be $\As(X)$. Put the inclusion $\iota : \mathrm{Ker}(\varepsilon_X) \hookrightarrow \As(X) $.
Consequently, defining the T-map $\Theta_X$ to be the composite $ \mathcal{T}_{\As(X)} \circ \iota_* \circ \Theta_{\Pi \Omega}: \Pi_2(X) \ra H_3^{\mathrm{gr}}(\As(X))$,
we can see that this $\Theta_X$ satisfies the desired properties.
\end{proof}

\section{Preliminaries; quandle homology and cocycle invariant.}\label{SS3j3}
\large
\baselineskip=16pt
As a preliminary, we now review some properties of the (co)homology of the rack space,
and a topological interpretation of the cocycle invariants.
There is nothing new in this section.

We start by reviewing the (action) rack space introduced by Fenn-Rourke-Sanderson \cite[Example 3.1.1]{FRS1}.
Let $X$ be a quandle. We further fix a set $Y$ acted on by $\As(X)$, which is called {\it $X$-set}. For example, the quandle $X$ is itself
an $X$-set, referred as to {\it the primitive $X$-set}, from the canonical action $X \curvearrowleft \As(X)$ mentioned in \S \ref{ss1}.
We further equip a quandle $X$ and the $X$-set $Y$ with their discrete topology.
Let us put a union $ \bigcup_{n \geq 0} \bigl( Y \times ([0,1]\times X)^n \bigr)$, and consider
the relations given by

\vskip -0.994pc
\normalsize
\[(y,t_1,x_1, \dots, x_{j-1},1,x_j, t_{j+1}, \dots, t_n,x_n) \sim (y \cdot e_{x_{j}} , t_1,x_1\tri x_{j}, \dots, t_{j-1}, x_{j-1}\tri x_{j},t_{j+1},x_{j+1},\dots, t_n,x_n), \]
\begin{equation}\label{equ.relation} \normalsize \! (y, t_1,x_1, \dots, x_{j-1},0,x_j,t_{j+1},\dots, t_n,x_n) \sim (y, t_1,x_1, \dots t_{j-1},x_{j-1},t_{j-1},x_{j+1},\dots, t_n, x_{n} ). \notag \end{equation}

\large
\baselineskip=16pt
\noindent
Then the {\it rack space} $B(X,Y)$ is defined to be the quotient space.
When $Y$ is a single point, we denote the space by $BX$ for short.
By construction, we have a cell decomposition of $B(X,Y)$ by
regarding the projection $ \bigcup_{n \geq 0} \bigl( Y \times ([0,1]\times X)^n \bigr) \ra B(X,Y)$ as characteristic maps.

Furthermore, we briefly review the rack and quandle (co)homologies (our formula relies on \cite{CEGS}).
Let $X$ be a quandle, and $Y$ an $X$-set.
Let $C_n^R(X,Y)$ be the free right $\Z$-module generated by $Y \times X^n$.
Define a boundary $\partial^R_n : C_n^R(X,Y) \rightarrow C_{n-1}^R(X,Y )$ by
$$ \partial^R_n (y, x_1, \dots,x_n)= \sum_{1\leq i \leq n} (-1)^i\bigl( (y \tri x_i, x_1\tri x_i,\dots,x_{i-1}\tri x_i,x_{i+1},\dots,x_n) -( y,x_1, \dots,x_{i-1},x_{i+1},\dots,x_n) \bigr).$$
The composite $\partial_{n-1}^R \circ \partial_n^R $ is known to be zero. The homology is denoted by $H^R_n(X,Y)$ and is called {\it rack homology}.
As is known, the cellular complex of the rack space $B(X,Y)$ above is isomorphic to the complex $(C_* ^R(X,Y),\partial_*^R )$.


\begin{rem}\label{hayano}
If $Y$ is the primitive $X$-set $Y=X$,
we have the chain isomorphism $C^R_n (X,X ) \ra C^R_{n+1} (X,{\rm pt} ) $ induced from the identification $X \times X^n \simeq X^{n+1}$; see, e.g., \cite[Proposition 2.1]{Cla}.
In particular, we obtain an isomorphism $H^R_n (X,X) \cong H_{n+1}^R (X, {\rm pt})$. \end{rem}

\noindent
Furthermore, let $C^D_n (X,Y) $ be a submodule of $C^R_n (X,Y)$ generated by $(n+1)$-tuples $(y,x_1, \dots,x_n)$
with $x_i = x_{i+1}$ for some $ i \in \{1, \dots, n-1\}$. It can be easily seen that the submodule $C^D_n (X,Y) $ is a subcomplex of $ C^R_{n} (X,Y). $
Then the {\it quandle homology}, $H^Q_n (X,Y ) $, is defined to be the homology of the quotient complex $C^R_n (X,Y ) /C^D_n (X,Y)$.

We will review some properties of these homologies in the case where $Y$ is a single point (In such a case, we omit the symbol $Y$).
Let us decompose $X $ as $X= \sqcup_{i \in \OX}X_i$ by the connected components. 
The following isomorphisms were shown \cite{LN}:
\begin{equation}
\label{eq.H2RQ} \hspace*{5pc}
H^R_1 (X) \cong \Z^{\OX }, \qquad
H^R_2 (X) \cong H_2^Q (X) \oplus \Z^{\OX }.
\end{equation}

Furthermore, Eisermann \cite{Eis2} gave a computation of the second quandle homologies $ H_2^Q (X)$ with trivial $\Z$-coefficients.
To see this, for $i \in \OX $, define a homomorphism
\begin{equation}\label{epsilon} \displaystyle{ \varepsilon_i : \As (X) \ra \Z } \ \ \ \ \textrm{ by } \left\{ \begin{array}{ll}
\displaystyle{ \varepsilon_i ( e_x)=1 \in \Z }, &\ \mathrm{if} \ \ x \in X_i, \\
\displaystyle{ \varepsilon_i ( e_x)=0\in \Z }, &\ \mathrm{if} \ \ x \in X \setminus X_i.\\
\end{array} \right. \ \end{equation}
Note that the sum $\oplus_{i \in \OX} \varepsilon_i$ yields the abelianization $\As(X)_{\rm ab} \cong \Z^{\OX}$ by \eqref{hasz2}. Furthermore
\begin{thm}[{\cite[Theorem 9.9]{Eis2}}]\label{ehyouj2i3}
Let $X $ be a quandle. Decompose $X= \sqcup_{i \in \OX }X_i$ as the orbits by the action of $\As(X)$.
Fix an element $x_i \in X_i$ for each $i \in \OX$. Let $ \mathrm{Stab}(x_i) \subset \As(X) $ be the stabilizer of $x_i$.
Then the quandle homology $H_2^Q (X )$ is isomorphic to the direct sum of the abelianizations of $\mathrm{Stab}(x_i) \cap \Ker (\varepsilon_i) $: Precisely,
$\bigoplus_{i \in \OX} \bigl( \mathrm{Stab}(x_i) \cap \Ker (\varepsilon_i) \bigr)_{\mathrm{ ab}}$.
\end{thm}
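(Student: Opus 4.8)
\textbf{Proof proposal for Theorem \ref{ehyouj2i3}.}

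The plan is to reduce the computation to the rack homology of the primitive $X$-set and then to identify a small chain complex computing it. First I would use the direct-sum decomposition $H^R_2(X)\cong H^Q_2(X)\oplus \Z^{\OX}$ from \eqref{eq.H2RQ} together with Remark \ref{hayano}, so that it suffices to understand $H^R_1(X,X)\cong H^R_2(X,{\rm pt})$; more precisely, since everything splits over the connected components $X=\sqcup_{i\in\OX}X_i$, I would fix $i\in\OX$ and work with a single connected component, reducing to the case where $X$ is connected with a chosen basepoint $x_i\in X$. The key observation is that for a connected quandle the rack space $BX$ (and the associated complexes $B(X,X)$) can be compared with the classifying space of the associated group: the action $X\curvearrowleft\As(X)$ is transitive, so $X\cong \As(X)/\mathrm{Stab}(x_i)$, and the action rack space $B(X,Y)$ with $Y=X$ is homotopy equivalent to $B\,\mathrm{Stab}(x_i)$ up to the relevant range. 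This is essentially the content behind the identification of the rack space with a covering-space / homotopy-quotient construction, as in Fenn--Rourke--Sanderson.

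The main steps, in order, would be: (1) record the splitting $H^Q_2(X)\cong\bigoplus_i H^Q_2(X_i)$ over components, and on each component use \eqref{eq.H2RQ} and Remark \ref{hayano} to pass from $H^Q_2$ to $H^R_1(X_i,X_i)$; (2) identify $H^R_*(X,X)$ for connected $X$ with the group homology of the stabilizer $\mathrm{Stab}(x_i)$, by exhibiting the action rack complex $C^R_*(X,X)$ as (chain homotopy equivalent to) the bar complex of $\mathrm{Stab}(x_i)$ in low degrees — here the transitivity of the $\As(X)$-action on $X$ is what lets one ``contract'' the $X$-coordinate; (3) track the effect of the splitting $\varepsilon_i:\As(X)\to\Z$ of \eqref{hasz}: the extra $\Z^{\OX}$ summand in \eqref{eq.H2RQ} corresponds exactly to the free abelian part coming from $\varepsilon_i$, so that after quotienting it out one is left with $H_1$ of $\mathrm{Stab}(x_i)\cap\Ker(\varepsilon_i)$ rather than of $\mathrm{Stab}(x_i)$ itself; (4) finally invoke $H_1^{\rm gr}(\Gamma)\cong\Gamma_{\mathrm{ab}}$ for a group $\Gamma$ to conclude $H^Q_2(X)\cong\bigoplus_i\bigl(\mathrm{Stab}(x_i)\cap\Ker(\varepsilon_i)\bigr)_{\mathrm{ab}}$.

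I expect step (2) to be the main obstacle: making precise the equivalence between the action rack complex and the bar resolution of the stabilizer, and in particular checking that it is natural enough to carry the $\varepsilon_i$-grading through step (3) without sign or indexing errors. Concretely, one must choose, for each $x\in X$, a group element $g_x\in\As(X)$ with $x_i\cdot g_x=x$ (possible by connectivity), and use the $g_x$ to rewrite a rack $1$-chain $(x_i,x_1)$ — equivalently a rack $2$-chain $(x_0,x_1)$ over a point — in terms of elements of $\mathrm{Stab}(x_i)$; the relation $e_{x\cdot g}=g^{-1}e_xg$ of \eqref{hasz2} is what makes this substitution compatible with the boundary map, but verifying that the induced map on homology is an isomorphism (and not merely surjective) in degrees $\le 2$ requires a careful diagram chase or an explicit chain homotopy. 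Since the statement is quoted from \cite[Theorem 9.9]{Eis2}, I would in practice cite Eisermann for this core computation and only sketch how it fits the present notation; the reader can consult \cite{Eis2} for the full argument.
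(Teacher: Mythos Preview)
Your overall strategy matches the paper's: pass from $H_2^Q(X)$ to $H_1^R(X,X)$ via \eqref{eq.H2RQ} and Remark~\ref{hayano}, identify the latter with abelianized stabilizers, then split off the $\Z^{\OX}$ summand coming from $\varepsilon_i$. The paper's execution of step~(2), however, is much lighter than the chain-level comparison you anticipate as the main obstacle. Rather than comparing $C_*^R(X,X)$ to a bar resolution, the paper invokes Proposition~\ref{propFRS}: each $B(X,X_i)\to BX$ is a covering with $\pi_1(B(X,X_i))\cong\mathrm{Stab}(x_i)$, so $H_1^R(X,X_i)=H_1(B(X,X_i))\cong\mathrm{Stab}(x_i)_{\mathrm{ab}}$ is immediate from Hurewicz. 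No explicit chain homotopy or choice of $g_x$'s is needed. For step~(3), the paper observes that $e_{x_i}\in\mathrm{Stab}(x_i)$ is central there (by \eqref{hasz2}), so $\mathrm{Stab}(x_i)\cong\bigl(\mathrm{Stab}(x_i)\cap\Ker(\varepsilon_i)\bigr)\times\Z$ and the abelianization splits directly; one then matches the resulting $\Z^{\OX}$ with the summand generated by the diagonal classes $(x_i,x_i)$ in $H_2^R(X)$.

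Two small cautions. First, your phrasing ``$H_2^Q(X)\cong\bigoplus_i H_2^Q(X_i)$'' with $X_i$ regarded as subquandles is not what is used and is not obviously true: the decomposition happens on the $X$-set side, $C_*^R(X,X)=\bigoplus_i C_*^R(X,X_i)$, while the quandle remains the full $X$ throughout. Second, $B(X,X_i)$ is \emph{not} homotopy equivalent to $B\,\mathrm{Stab}(x_i)$ in any range beyond $\pi_1$ --- it is a covering of $BX$ and inherits $\pi_2(BX)\neq 0$ --- so your claim that $H_*^R(X,X)$ agrees with group homology of the stabilizer would fail already in degree~$2$; fortunately only degree~$1$ is needed.
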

\noindent
Eisermann showed topologically this result by using a certain CW-complex.
However, in \S \ref{kokoroni} we later give another proof as
a slight application of Proposition \ref{propFRS}.


Furthermore, we now turn into the study of the cycle invariant $\Phi_X(L)$ mentioned in \eqref{h2h247}, and
briefly explain a topological interpretation of this invariant shown by Eisermann \cite{Eis1,Eis2}. 
Decompose $X= \sqcup_{i \in \OX}X_i $ as above.
Given an $X$-coloring $\mathcal{C} \in \col_X(D)$,
with respect to a link component of $L$, we fix an arc $\gamma_{j}$ on $D$ for $1 \leq j \leq \# L$.
Let $x_j:=\mathcal{C}(\gamma_j) \in X_j$, and fix a longitude $ \mathfrak{l}_j$ of the component.
Recall from \eqref{Gammacdef} the associated group homomorphism $\Gamma_{\mathcal{C}} : \pi_1(S^3 \setminus L) \ra \mathrm{As}(X)$.
Remark that each longitude $ \mathfrak{l}_j$ commutes with the meridian in the same link component.
Accordingly, $\Gamma_{\mathcal{C}}( \mathfrak{l}_j)$ commutes with $e_{ x_j } $ in $\As(X)$: in other wards, $\Gamma_{\mathcal{C}}( \mathfrak{l}_j) \in \mathrm{Stab}(x_j)$.
Furthermore, since the class of the longitude $ \mathfrak{l}_j $ in $H_1(S^3 \setminus L )$ is zero, $\Gamma_{\mathcal{C}}( \mathfrak{l}_j)$ is contained in the kernel $ \Ker (\varepsilon_j )$ [see \eqref{epsilon}].
Therefore, $ \Gamma_{\mathcal{C}}( \mathfrak{l}_j)$ lies in $\mathrm{Stab}(x_j) \cap \Ker (\varepsilon_j ) $.
Further, consider the class $\Gamma_{\mathcal{C}}( \mathfrak{l}_j)$ in the abelianization of this $\mathrm{Stab}(x_j) \cap \Ker (\varepsilon_j ) $.
In summary, we obtain
\begin{equation}\label{kantanji} \bigl( [\Gamma_{\mathcal{C}}( \mathfrak{l}_1)], \dots, [\Gamma_{\mathcal{C}} ( \mathfrak{l}_{\# L})] \bigr) \in \bigoplus_{1 \leq j \leq \# L} \bigl( \mathrm{Stab}(x_j) \cap \Ker (\varepsilon_j) \bigr)_{\mathrm{ ab} }. \end{equation}
Here note from Theorem \ref{ehyouj2i3} that each direct summand in the right side is contained in $H_2^Q(X) $.
We then put the product $ [\Gamma_{\mathcal{C}}( \mathfrak{l}_1) \cdots \Gamma_{\mathcal{C}} ( \mathfrak{l}_{\# L})] \in H_2^Q(X)$.
By the discussion in \cite[Theorems 3.24 and 3.25]{Eis1}, it can be seen that the product coincides with the
value $\mathcal{H}_X (\mathcal{C})$ in \eqref{h2h2} exactly.
Hence, when $|X| < \infty $, the cycle invariant $\Phi_X(L)$ written in \eqref{h2h247} is reformulated as
\begin{equation}\label{kantan} \Phi_{X}(L)= \sum_{ \mathcal{C} \in \col_X(D)} [ \Gamma_{\mathcal{C}}( \mathfrak{l}_1) \cdots \Gamma_{\mathcal{C}}( \mathfrak{l}_{\# L}) ] \in \Z[ H_2^Q(X) ]. \end{equation}
This $\Phi_{X}(L) $ was called ``colouring polynomials" in \cite[\S 1]{Eis1}. As a result, this
formula suggests an easy computation and a topological meaning of the cycle invariant as desired.

Finally, we observe a relation between this formula \eqref{eq.H2RQ} and the Hurewicz homomorphism of $BX$.
Recall the map $\mathcal{H}_X:\Pi_2(X) \ra H_2^Q(X)$ in (\ref{h2h2}). Using the isomorphisms (\ref{eq.H2RQ}) and (\ref{bxbx}), we consider a composite
$$\pi_2(BX) \cong \Pi_2(X) \oplus \Z^{\mathrm{O}(X)} \xrightarrow{\mathrm{proj}.} \Pi_2(X) \xrightarrow{\mathcal{H}_X} H_2^Q(X) \hookrightarrow H_2^Q(X) \oplus \Z^{\mathrm{O}(X)} \cong H_2(BX).$$
From the definitions of the maps $\mathcal{H}_X$ and the 2-skeleton of the rack space $BX$, we can easily verify that this
composite coincides with the Hurewicz map of $BX$ modulo the direct summand $\Z^{\mathrm{O}(X)}$ (see \cite{RS} and \cite[Proposition 3.12]{Nos1} for details). In conclusion, the formula (\ref{kantan}) enables us to compute the Hurewicz map of
$BX$.

\section{Proof of Theorem \ref{theorem}}\label{ss2se}
This section proves Theorem \ref{theorem}.
Since the proof is ad hoc, the hasty reader may read only the outline in \S \ref{SS3j3out} and skip the details in other subsections.
Following the outline, Section \ref{vanishes} states a $t_X $-vanishing theorem and some properties of quandle coverings.
In Section \ref{SS4j3out}, we will investigate the homomorphism $\Theta_X$ in terms of relative bordism groups,
and complete the proof.
We will fix notation throughout this section.

\vskip 0.4pc
\noindent
{\bf Notation.} Let $X$ be a connected quandle of type $t_X < \infty$ (possibly, $X$ is of infinite order).
Furthermore, we write a symbol $\ell $ for a prime which is relatively prime to the integer $t_X $.
\vskip 0.4pc

\subsection{Outline of proofs of Theorem \ref{theorem}}\label{SS3j3out}

\baselineskip=16pt
We roughly outline the proof of Theorem \ref{theorem} to compute $\Pi_2(X) $.

As an approach to the homotopy group $\pi_2 (BX ) $,
the reader should keep in mind the following isomorphism shown by \cite{FRS2} (see also \cite[Theorem 6.2]{Nos1} for the detailed description):
\begin{equation}\label{bxbx}\pi_2 (BX ) \cong \Pi_2(X) \oplus \Z^{\oplus \mathrm{O}(X)}, \end{equation}
where the symbol $\mathrm{O}(X) $ is the set of the connected components of $X$.
According to the isomorphism \eqref{bxbx}, to compute $\Pi_2(X)$, we will change a focus on computing the homotopy group $\pi_2(BX)$ from a standard discussion of ``Postnikov tower on $BX$".
To illustrate, let $c : BX \hookrightarrow K (\pi_1(BX),1) $ be an inclusion obtained by killing the higher homotopy groups of $BX$. Notice that the homotopy fiber of $c$ is the universal covering of $BX$.
Thanks to the fact \cite{FRS2} that the action of $\pi_1(BX)$ on $\pi_2 (BX)$ is trivial (see also \cite[Proposition 2.16]{Cla}),
the Leray-Serre spectral sequence of the map $c$ provides an exact sequence
\begin{equation}\label{kihon2} H_3(BX )\stackrel{c_*}{\lra} H_3^{\rm gr}(\pi_1(BX) ) \stackrel{\tau}{\lra} \pi_2(BX ) \stackrel{\mathcal{H}}{\lra} H_2(BX ) \stackrel{c_*}{\lra} H^{\rm gr}_2(\pi_1(BX)) \ra 0 \ \ \ \ \ \ ({\rm exact}), \end{equation}
\noindent where $\mathcal{H}$ is the Hurewicz map of $BX $ and the $\tau$ is the transgression (see, e.g., \cite[\S $8.3^{bis}$]{McC}, \cite[\S II.5]{Bro} for details).

We now reduce this \eqref{kihon2} to \eqref{kihon3} below.
Recalling the isomorphism $H_2(BX) \cong \Z^{\mathrm{O}(X)} \oplus H_2^Q(X)$ (see \eqref{eq.H2RQ}), 
the restriction of the Hurewicz map $\mathcal{H}$ on the summand $\Z^{\mathrm{O}(X)} \subset \pi_2 (BX) $
is shown to be an isomorphism \cite[Proposition 3.12]{Nos1}.
Therefore, recalling the isomorphism $\As(X) \cong \pi_1(BX)$, 
the sequence \eqref{kihon2} is reformulated as
\begin{equation}\label{kihon3} H_3(BX )\stackrel{c_*}{\lra} H_3^{\rm gr}(\As (X) ) \stackrel{\tau}{\lra} \Pi_2(X ) \stackrel{\mathcal{H}}{\lra} H_2^Q(X ) \stackrel{c_*}{\lra} H^{\rm gr}_2(\As (X)) \ra 0 \ \ \ \ \ \ ({\rm exact}). \end{equation}
Since this paper often uses this sequence, we call it {\it the $P$-sequence (of} $X$).

Using the $P$-sequence, we outline the proof of Theorem \ref{theorem}.
Let $X$ be connected and of type $t_X < \infty $.
We later see Theorem \ref{centthm} which says that the maps
$c_*: H_n (BX ) \ra H_n^{\rm gr}(\pi_1(BX) ) $ in \eqref{kihon2} are annihilated by $t_X $ for $n \leq 3$.
Thus, the $P$-sequence \eqref{kihon3} becomes a short exact sequence up to $t_X$-torsion (Corollary \ref{cccor2}).
Hence, in order to show that the TH-map $ \Pi_2(X) \ra H_3^{\rm gr}(\As (X) ) \oplus H_2^Q(X ) $ is a [$1/t_X$]-isomorphism as stated in Theorem \ref{theorem},
we shall show that the T-map $\Theta_X: \Pi_2(X) \ra H_3^{\rm gr}(\As (X) )$ constructed in Theorem \ref{keykeipp}
turns out to be a splitting of the exact sequence \eqref{kihon3}.

To this end, we first show the splitting with respect to the extended quandles $\X $ (Proposition \ref{centthm4o1}).
So we will review properties of the $\X$ in \S \ref{vanishes}.
The point is that, using these properties, the transgression $\tau $ in \eqref{kihon3} can be regarded as
an inverse mapping of the T-map $\Theta_{\X} $ in a (relative) bordism theory.
After that, returning connected quandles $X$,
the functoriality of the projection $\X \ra X$ completes the proof of Theorem \ref{theorem}.

Before going to the next subsection, we now immediately prove Proposition \ref{buta}:
\begin{proof}[Proof of Proposition \ref{buta}]
Since the $t_X$-torsion of $ H_3^{\gr}( \As(X)) \oplus H_2^{Q}(X) $ is zero by assumption,
that of $\Pi_2(X)$ vanishes because of \eqref{kihon3}. Hence, that of the TH-map $ \Theta_X \oplus \mathcal{H}_X$ is zero as desired.
\end{proof}

\subsection{The $t_X $-vanishing of the map $c_*$, and the quandle covering.}\label{vanishes}
Following the preceding outline, we will review the results in \cite{Nos3}: First,
\begin{thm}[\cite{Nos3}]\label{centthm}
Let $X$ be a connected quandle of type $t_X $, and let $t_X < \infty $.
For $n = 2$ and $3$, the induced map $c_*: H_n(BX ) \ra H_n^{\rm gr} (\As(X)) $ in \eqref{kihon2} is
annihilated by $t_X $.

Furthermore, the second group homology $ H_2^{\rm gr} (\As(X)) $ is annihilated by $t_X $.
\end{thm}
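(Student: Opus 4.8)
The plan is to compare $BX$ directly with the Eilenberg--MacLane space $K(\As(X),1)$ through the canonical map $c$, and to exhibit an explicit chain-level homotopy that is divisible by $t_X$ on the relevant (co)chain groups. The starting point is the cell structure of the rack space $BX$ recalled in \S\ref{SS3j3}: its cellular chain complex in low degrees is the quandle complex $C_*^Q(X)$ (up to the splitting off of the $\Z^{\mathrm{O}(X)}$ summands recorded in \eqref{eq.H2RQ}), while $H_*^{\rm gr}(\As(X))$ is computed by the bar complex of $\As(X)$. The map $c$ induces, on generators $e_x$ of $\As(X)$ and on the $1$-cells of $BX$, a compatible map; the obstruction to $c_*$ being zero in degrees $2,3$ is measured by cycles in the bar complex of $\As(X)$ coming from the defining relations $e_x e_y = e_y e_{x\lhd y}$ and from the homology relations among them.

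First I would make precise, using Lemma \ref{daiji}, that $(e_x)^{t_X}$ is a \emph{central} element of $\As(X)$ independent of $x$; call it $z$. The element $z$ is exactly what controls the discrepancy between the quandle-type relation $x\lhd^{t_X}y=x$ in $X$ and the fact that $e_y^{t_X}$ need not be trivial in $\As(X)$. Concretely, the $2$- and $3$-cells of $BX$ that are ``extra'' relative to $K(\As(X),1)$ are governed by the diagonal degeneracies $(x,x)$ and $(x,x,y)$, $(x,y,y)$; one checks that the image under $c_*$ of the corresponding cellular cycles is, in the bar complex of $\As(X)$, a boundary of a chain built from the central element $z=e_x^{t_X}$. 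The key computation is that ``unwinding'' the type relation $t_X$ times produces precisely a $z$-coset, so the relevant bar cycle becomes null-homologous after multiplying by $t_X$ — equivalently, the cycle itself already lies in the image of the transfer/norm element associated to the cyclic subgroup $\langle z\rangle$, which is killed by $|{\rm something dividing }t_X|$.

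The cleanest route, and the one I would actually pursue, avoids bare-hands bar-complex juggling by invoking the bordism picture already set up: by Lemma \ref{key231} and the proof of Theorem \ref{keykeipp}, $\Theta_X$ factors through $\Omega_3(\Ker(\varepsilon_X))\cong H_3^{\rm gr}(\Ker(\varepsilon_X))$, and the $t_X$-fold branched covering construction is what introduces the type into the geometry. I would argue that $t_X\cdot c_*$ vanishes on $H_3(BX)$ by showing that $t_X$ times any $3$-cycle of $BX$ bounds in $K(\As(X),1)$: lift the $3$-cycle to a configuration of colored diagrams, pass to the $t_X$-fold branched cover (which exists precisely because the type is $t_X$), and observe that the pushforward to $K(\As(X),1)$ of the resulting closed $3$-manifold is null-bordant — the $t_X$ copies glue up along the branch locus to bound. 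For $n=2$ the statement is lower-dimensional and follows from the analogous (easier) fact that $t_X$ times a $2$-cycle of $BX$, pushed to $K(\As(X),1)$, is a boundary, which one reads off from the relation $e_x^{t_X}=e_y^{t_X}$ central, using the $3$-skeleton of $BX$ described in \cite[\S 2]{Nos1}.

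The main obstacle I expect is the bookkeeping in degree $3$: making the chain homotopy (or the bounding $4$-manifold/$4$-chain) genuinely divisible by $t_X$ rather than merely showing $c_*$ has $t_X$-power-torsion image. The subtlety is that ``unwind the type relation $t_X$ times'' must be implemented \emph{coherently across all the $3$-cells simultaneously}, respecting the quandle axiom (iii), so that the partial unwindings assemble into an honest cellular chain whose boundary is $t_X$ times the cycle in question. I would handle this by induction on the skeleta, using the centrality of $z=e_x^{t_X}$ to commute $z$ past everything and the connectivity of $X$ to make the choice of $z$ uniform, exactly as in Lemma \ref{daiji}; the functoriality of $c$ under the universal covering $\X\to X$ then lets me reduce, if needed, to the extended quandle where $\Ker(\varepsilon_X)$ is acting and the $\Z$-equivariance of Remark \ref{zdouhenno} streamlines the argument.
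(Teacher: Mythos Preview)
Your proposal identifies the right ingredient --- the central element $z=e_x^{t_X}$ of Lemma \ref{daiji} --- but it does not actually prove the theorem, and the ``cleanest route'' you sketch is flawed.

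The bordism argument conflates $H_3(BX)$ with $\Pi_2(X)$. Classes in $H_3(BX)$ are not represented by colored link diagrams; that is a description of $\Pi_2(X)\subset\pi_2(BX)$, which sits in an entirely different place in the exact sequence \eqref{kihon2}. So the sentence ``lift the $3$-cycle to a configuration of colored diagrams, pass to the $t_X$-fold branched cover'' has no content for a general $3$-cycle of $BX$. Moreover, the machinery of Lemma \ref{key231} and Proposition \ref{centthm4o1} is downstream of Theorem \ref{centthm} in the paper's logic (Corollary \ref{cccor2} is used in the proof of Proposition \ref{centthm4o1}), so invoking it here risks circularity.

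What the paper actually does is exactly the ``bare-hands bar-complex juggling'' you try to avoid: it writes down Kabaya's explicit chain map $c_n:C_n^R(X)\to C_n^{\rm gr}(\As(X))$ for $n\le 3$, and then exhibits explicit chain homotopies $h_1,h_2,h_3$ built from sums of the form $\sum_{1\le j\le t_X-1}(e_x,e_x^j,\dots)$ and verifies by direct (and in degree $3$, tedious) computation that $\partial^{\rm gr}h_n + h_{n-1}\partial^R = t_X\cdot c_n$. The identity $e_x^{t_X}=e_y^{t_X}$ from Lemma \ref{daiji} is what makes the telescoping sums collapse. Your first paragraph gestures at this, but you never write down $h_2$ or $h_3$, and your remark about ``diagonal degeneracies $(x,x)$, $(x,x,y)$, $(x,y,y)$'' is off-target: those are the chains killed in passing from rack to quandle homology, not the source of the obstruction to $c_*$ vanishing. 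The obstacle you name at the end --- making the homotopy genuinely divisible by $t_X$ coherently across all cells --- is the entire content of the proof, and it is resolved only by producing the explicit formulas and checking them.
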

This theorem yields two corollaries which are useful for the $P$-sequences as follows.
\begin{cor}\label{cccor2}
Let $X$ be as above,
and $\ell $ be a prime which is relatively prime to the $t_X $.
Then the $P$-sequence localized at $\ell$ is reduced to a short exact sequence
$$ 0\lra H_3^{\rm gr}(\As(X))_{(\ell)} \lra \pi_2(BX)_{(\ell)} \xrightarrow{\ \mathcal{H}_{X} \ } H_2(BX)_{(\ell)}\lra 0 \ \ \ \ (\mathrm{exact}). $$
\end{cor}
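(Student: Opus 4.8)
The plan is to derive Corollary \ref{cccor2} as a purely formal consequence of the $P$-sequence \eqref{kihon3} together with Theorem \ref{centthm}, using the elementary fact that localization at a prime $\ell$ is an exact functor on abelian groups. First I would recall that \eqref{kihon3} is the exact sequence
$$ H_3(BX )\stackrel{c_*}{\lra} H_3^{\rm gr}(\As (X) ) \stackrel{\tau}{\lra} \Pi_2(X ) \stackrel{\mathcal{H}_X}{\lra} H_2^Q(X ) \stackrel{c_*}{\lra} H^{\rm gr}_2(\As (X)) \ra 0 , $$
and that by \eqref{bxbx} and \eqref{eq.H2RQ} this is equivalent, after adding the split summand $\Z^{\mathrm{O}(X)}$ on both the $\pi_2$ and the $H_2$ terms, to the sequence with $\pi_2(BX)$ and $H_2(BX)$ in place of $\Pi_2(X)$ and $H_2^Q(X)$; so it suffices to prove the statement for \eqref{kihon3} and then re-add the $\Z^{\mathrm{O}(X)}$ summand, on which $\mathcal H_X$ restricts to an isomorphism.

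The key step is the observation that both occurrences of $c_*$ in \eqref{kihon3} become zero after localizing at $\ell$. Indeed, by Theorem \ref{centthm} the maps $c_*\colon H_n(BX)\to H_n^{\rm gr}(\As(X))$ are annihilated by $t_X$ for $n=2,3$; since $\ell$ is relatively prime to $t_X$, multiplication by $t_X$ is an automorphism of every $\Z_{(\ell)}$-module, so a homomorphism of $\Z_{(\ell)}$-modules killed by $t_X$ must itself vanish. Hence $(c_*)_{(\ell)}=0$ in both places. Now apply the exact functor $(-)_{(\ell)}$ to \eqref{kihon3}: exactness at $H_3^{\rm gr}(\As(X))_{(\ell)}$ together with $(c_*)_{(\ell)}=0$ on the left gives that $\tau_{(\ell)}$ is injective; exactness at $H_2^Q(X)_{(\ell)}$ together with $(c_*)_{(\ell)}=0$ on the right gives that $(\mathcal H_X)_{(\ell)}$ is surjective; and exactness in the middle is unchanged. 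This yields the short exact sequence
$$ 0 \lra H_3^{\rm gr}(\As(X))_{(\ell)} \stackrel{\tau}{\lra} \Pi_2(X)_{(\ell)} \stackrel{\mathcal{H}_X}{\lra} H_2^Q(X)_{(\ell)} \lra 0 . $$

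Finally I would re-introduce the split summand $\Z^{\mathrm{O}(X)}$: by \eqref{bxbx} one has $\pi_2(BX)_{(\ell)}\cong \Pi_2(X)_{(\ell)}\oplus \Z_{(\ell)}^{\mathrm{O}(X)}$, by \eqref{eq.H2RQ} one has $H_2(BX)_{(\ell)}\cong H_2^Q(X)_{(\ell)}\oplus \Z_{(\ell)}^{\mathrm{O}(X)}$, and (as recalled at the end of \S\ref{SS3j3}) the Hurewicz map $\mathcal H_X$ of $BX$ restricts to the identity on these $\Z^{\mathrm{O}(X)}$ summands and agrees with the quandle-homological $\mathcal H_X$ on the complement. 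Taking the direct sum of the displayed short exact sequence with the identity sequence $0\to 0\to \Z_{(\ell)}^{\mathrm{O}(X)}\xrightarrow{\id}\Z_{(\ell)}^{\mathrm{O}(X)}\to 0$ produces exactly the asserted short exact sequence
$$ 0\lra H_3^{\rm gr}(\As(X))_{(\ell)} \lra \pi_2(BX)_{(\ell)} \stackrel{\mathcal{H}_{X}}{\lra} H_2(BX)_{(\ell)}\lra 0 . $$
The only non-formal input is Theorem \ref{centthm}, whose proof is deferred to Appendix \ref{dsepr}; granting it, there is essentially no obstacle here — the argument is a one-line diagram chase after localization, and the main thing to be careful about is bookkeeping the identification of $\Pi_2(X)$ with $\pi_2(BX)$ modulo $\Z^{\mathrm{O}(X)}$ so that the two versions of the $P$-sequence genuinely match up.
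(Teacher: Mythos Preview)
Your proposal is correct and is exactly the argument the paper has in mind: the paper states Corollary \ref{cccor2} without proof, as an immediate consequence of Theorem \ref{centthm} applied to the $P$-sequence \eqref{kihon2}--\eqref{kihon3}, together with the exactness of localization at $\ell$. Your careful bookkeeping of the $\Z^{\mathrm{O}(X)}$ summand (via \eqref{bxbx} and \eqref{eq.H2RQ}) to pass between the $\Pi_2(X),\,H_2^Q(X)$ and the $\pi_2(BX),\,H_2(BX)$ versions is the only subtlety, and you have handled it correctly.
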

\begin{cor}\label{cccor3}
Let $X$ and $\ell \in \Z $ be as above. Let $X$ be of finite order. Then the quandle cycle invariant $\Phi_X$ in \eqref{h2h247}
is non-trivial in the $\ell$-torsion part. That is, for any class $ [ O ] \in H_2(BX)_{(\ell)}$,
there exists some $X$-coloring $\mathcal{C}$ of some link such that $\mathcal{H}_X ([\mathcal{C}]) =[O]$.
\end{cor}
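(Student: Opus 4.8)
The plan is to deduce the statement directly from Corollary \ref{cccor2}, combined with the tautological fact that every class in $\pi_2(BX)$ is realized by an honest $X$-coloring of a link. First I would recall, from the construction of the quandle homotopy invariant in \S\ref{reqhi} and the isomorphism \eqref{bxbx}, that $\pi_2(BX)\cong\Pi_2(X)\oplus\Z^{\mathrm{O}(X)}$ is generated by the classes $[\CC]=\Xi_{X,D}(\CC)$ of $X$-colorings of link diagrams, with disjoint union of coloured links realizing addition, the mirror coloring realizing the inverse, and each generator of the free summand $\Z^{\mathrm{O}(X)}$ being the class of a constant coloring of an unknot; hence in fact every element of $\pi_2(BX)$ equals $[\CC]$ for a single $X$-coloring $\CC$ of some (possibly split) link. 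Also, by the last paragraph of \S\ref{SS3j3}, the Hurewicz map of $BX$ sends such a $[\CC]$ to $\mathcal{H}_X([\CC])$, and $\Phi_X(L)=\sum_{\CC}\mathcal{H}_X([\CC])$.

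Next I would invoke Corollary \ref{cccor2}: since $\ell$ is prime to $t_X$ and $|X|<\infty$, the Hurewicz map $\mathcal{H}_X:\pi_2(BX)_{(\ell)}\to H_2(BX)_{(\ell)}$ is surjective. Given a class $[O]$ in the $\ell$-torsion part of $H_2(BX)_{(\ell)}$, of order $\ell^k$ say, I would pick $\xi\in\pi_2(BX)_{(\ell)}$ with $\mathcal{H}_X(\xi)=[O]$, write $\xi$ as the image of $\eta/n$ with $\eta\in\pi_2(BX)$ and $n$ prime to $\ell$, so that $\mathcal{H}_X(\eta)=n[O]$ in $H_2(BX)_{(\ell)}$, and then — using that $n$ is a unit modulo $\ell^k$, so that $n[O]$ generates the same cyclic group as $[O]$ — write $[O]=m\,\mathcal{H}_X(\eta)=\mathcal{H}_X(m\eta)$ for a suitable integer $m$. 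By the first paragraph $m\eta$ is represented by an $X$-coloring $\CC$ of some link $L$, and then $\mathcal{H}_X([\CC])=[O]$, i.e. $[O]$ appears in $\Phi_X(L)_{(\ell)}$; since $[O]$ was arbitrary this proves the asserted non-triviality.

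The only point needing care — and the main obstacle, modest as it is for a corollary — is the passage between the integral group $\pi_2(BX)$, where $X$-colorings live, and its $\ell$-localization, where Corollary \ref{cccor2} supplies surjectivity: one cannot lift elements of $\pi_2(BX)_{(\ell)}$ to $\pi_2(BX)$ on the nose, and the clearing-of-denominators step above, exploiting that $n$ is invertible modulo the order of the torsion class $[O]$, is precisely what bridges this gap. This also explains why the statement concerns the torsion part: on the free summand $\Z^{\mathrm{O}(X)}$ of $H_2(BX)$ no such realization can hold, whereas on the torsion everything reduces to the formal argument above. An equivalent route, avoiding even the denominators, would be to note via Theorem \ref{centthm} that $t_X\cdot H_2(BX)$ lies in the image of the integral Hurewicz map and that $t_X$ is a unit in $\Z_{(\ell)}$, and then argue on the finite torsion subgroup directly.
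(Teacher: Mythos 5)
Your proposal is correct and follows essentially the same route as the paper: the paper's own proof is exactly the combination of the surjectivity of $\mathcal{H}_X$ after $\ell$-localization from Corollary \ref{cccor2} with the fact that every class of $\Pi_2(X)$ is represented by an $X$-coloring of some link. Your additional clearing-of-denominators step (and the remark that the statement only has content on the torsion part) merely makes explicit the localization detail that the paper leaves implicit.
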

\begin{proof}By Corollary \ref{cccor2}, the map $\mathcal{H}_X$ localized at $\ell$ is surjective.
Since the $\Pi_2(X)$ is generated by $X$-colorings of links by definition,
we have $\mathcal{H}_X ([\mathcal{C}]) =[O]$ for some $X$-coloring $\mathcal{C}$. 
\end{proof}
\begin{rem}\label{cccl}
In general, we can similarly see that, for a quandle $X$ with $H_2^{\gr} (\As(X))=0$, 
any class $ [ O ] \in H_2(BX)$ ensures some $X$-coloring $\mathcal{C}$ such that $\mathcal{H}_X ([\mathcal{C}]) =[O]$.
\end{rem}

Changing the subject, we will mention extended quandles considered in \cite[\S 7]{Joy}.
Recall the epimorphism $\varepsilon_X :\As (X) \ra \Z$ in \eqref{hasz}.
Given a connected quandle $X$ with $a \in X$, we equip the kernel $\Ker (\varepsilon_X) $ with
a quandle operation by setting
$$ g \lhd h := e_a^{-1} g h^{-1} e_a h \ \ \ \ \ \ \ \mathrm{for} \ g,h \in \Ker(\varepsilon_X) . $$
Let us denote the quandle $(\Ker (\varepsilon_X ), \lhd)$ by $\X$, and call {\it the extended quandle (of $X$)}.
We easily see the independence of the choice of $a \in X$ up to quandle isomorphisms.
Using the restricted action $ X \curvearrowleft \Ker (\varepsilon_X) \subset \As (X)$,
the canonical map $p: \X \ra X$ sending $g$ to $a \cdot g$ is known to be
a quandle homomorphism (see \cite[Theorem 7.1]{Joy}), and is called {\it the universal (quandle) covering of }$X$.

The previous paper \cite{Nos3} showed their interesting properties, which will be used later.
\begin{thm}[{\cite[\S 5]{Nos3}}]\label{extthm}
Let $X $ be a connected quandle of type $t_X$, and let $p_* : \As (\X ) \ra \As (X) $ be the epimorphism induced from the covering $p : \X \ra X $.
Fix the identity element $ 1_{\X} \in \Ker (\varepsilon_X ) = \X $.
Then,
\begin{enumerate}[(i)]
\item The quandle $\X$ is connected and is of type $t_X$. If $X$ is of finite order, then so is $\X$.
\item Under the canonical action of $\As (\X)$ on $\X$, the stabilizer $\mathrm{Stab}(1_{\X} )$ of $1_{\X} $ is equal to
$ \Z \times \Ker (p_*)$ in $\As (\X)$. 
Furthermore, the summand $\Z$ is generated by $1_{\X} $.
\item The second quandle homology of $\X$ is isomorphic to the abelian kernel of the projection $p_* : \As (\X ) \ra \As (X) $. Namely $H_2^Q(\X) \cong \Ker (p_*)$.
\item The homology $H_2^Q(\X) \cong \Ker (p_*)$ is annihilated by the type $t_X $.
\item The above map $p_* $ induces a $[1/t_X]$-isomorphism $H_3^{\gr} (\As(\X)) \cong H_3^{\gr} (\As(X)) $.
\end{enumerate}
\end{thm}

\subsection{The T-map $\Theta_X $ as a splitting}\label{SS4j3out}
We first prove Theorem \ref{theorem} by using the following key proposition.
\begin{prop}\label{centthm4o1}
Let $p: \X \ra X$ be the universal covering.
If the homology $H_3^{\gr }(\As(X))$ is finitely generated, then the T-map $\Theta_{\X} : \Pi_2(\X )\ra \Omega_{3}(\As(\X))$ in Theorem \ref{keykeipp}
is a $[1/t_X]$-splitting in the short exact sequence in Corollary \ref{cccor2}.
\end{prop}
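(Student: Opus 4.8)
The plan is to prove Proposition \ref{centthm4o1} in two stages: first that $\Theta_{\X}$ is an isomorphism modulo $t_X$-torsion, and then that it splits the short exact sequence of Corollary \ref{cccor2}. For the splitting, recall that the $P$-sequence \eqref{kihon3} applied to $\X$ reads
\begin{equation*}
H_3(B\X)\xrightarrow{c_*}H_3^{\rm gr}(\As(\X))\xrightarrow{\tau}\Pi_2(\X)\xrightarrow{\mathcal{H}}H_2^Q(\X)\xrightarrow{c_*}H_2^{\rm gr}(\As(\X))\to 0,
\end{equation*}
and by Theorem \ref{centthm} the outer maps $c_*$ are killed by $t_X$, so after inverting $\ell$ we get a short exact sequence $0\to H_3^{\rm gr}(\As(\X))_{(\ell)}\xrightarrow{\tau}\Pi_2(\X)_{(\ell)}\xrightarrow{\mathcal{H}}H_2^Q(\X)_{(\ell)}\to 0$. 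Therefore it suffices to show that the composite $\Theta_{\X}\circ\tau:H_3^{\rm gr}(\As(\X))_{(\ell)}\to\Omega_3(\As(\X))_{(\ell)}$ is, after applying the Thom isomorphism $\mathcal{T}_{\As(\X)}:\Omega_3(\As(\X))\cong H_3^{\rm gr}(\As(\X))$, an isomorphism modulo $t_X$-torsion. Since $H_3^{\rm gr}(\As(\X))$ is finitely generated (by Proposition \ref{kunosan} it agrees with $H_3^{\rm gr}(\As(X))$ modulo $t_X$), it is enough to show this composite is surjective mod $t_X$; then it is an isomorphism mod $t_X$ for cardinality reasons, which simultaneously gives that $\Theta_{\X}$ itself is an isomorphism mod $t_X$ (as $\tau$ is already surjective onto the kernel of $\mathcal{H}$ and $\mathcal{H}_{\X}$ restricted to that kernel is trivial) and that it is a splitting.

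The heart of the argument is thus to identify the transgression $\tau$, in bordism-theoretic terms, as an inverse to $\Theta_{\X}=\mathcal{T}_{\As(\X)}\circ\iota_*\circ\Theta_{\Pi\Omega}$ from Lemma \ref{key231}. The approach I would take is the following. Start from a class in $H_3^{\rm gr}(\As(\X))$; by the Thom isomorphism it is represented by a pair $(M,f:\pi_1(M)\to\As(\X))$ with $M$ a closed oriented $3$-manifold. Present $M$ by surgery/Heegaard data, equivalently realize $M$ as a branched cyclic cover: since every closed oriented $3$-manifold is a $t_X$-fold cyclic branched cover of $S^3$ branched over some link $L$ (after possibly adjusting by something killed by $t_X$; one can use the Hilden--Montesinos type results, or more cheaply work with the universal-covering quandle $\X$ whose type is $t_X$ and invoke the presentation of $\pi_1(\widehat C_L^{t_X})$ given in \S\ref{SS3j3outjlw}), and since $\As(\X)$ maps to $\Ker(\varepsilon_X)$ $\Z$-equivariantly (Remark \ref{zdouhenno}), the homomorphism $f$ together with this branched-cover structure yields an $\X$-coloring $\mathcal{C}$ of $L$ — this is exactly where the covering properties of $\X$ from \S\ref{SScon32}, especially Propositions \ref{propf11ti} and \ref{lemwl}, are used to produce a \emph{quandle} coloring rather than just a group homomorphism. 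Then $\Theta_{\X}(\Xi_{\X,D}(\mathcal{C}))=\theta_{\X,D}(\mathcal{C})_*([\widehat C_L^{t_X}])=f_*([M])$, i.e. the original class, up to $t_X$-torsion. This shows surjectivity of $\Theta_{\X}$ mod $t_X$, and a diagram chase with the $P$-sequence (using that $\mathcal{H}_{\X}\circ\tau=0$ and that $\mathcal{H}_{\X}$ restricted to the torsion-free complement is injective mod $t_X$) upgrades it to an isomorphism and a splitting.

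The main obstacle, and the step I expect to require the most care, is the construction just sketched of an $\X$-coloring of a link from an arbitrary pair $(M,f)$: one must (i) arrange that $M$ arises as a $t_X$-fold cyclic branched cover in a way compatible, mod $t_X$-torsion, with the chosen class in $\Omega_3$ — this is where controlling the error by $t_X$ is delicate and where one leans on the branched-cover presentation of $\pi_1(\widehat C_L^{t_X})$ and on $\Omega_3(G)\cong H_3^{\rm gr}(G)$; and (ii) check that the lift of $f$ through $p_*:\As(\X)\to\As(X)$ respects the quandle structure, i.e. sends meridians to elements $e_{\tilde x}$ and kills longitudes, so that it genuinely lies in $\col_{\X}(D)$. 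Point (ii) is precisely controlled by Proposition \ref{propf11ti} (the stabilizer of $1_{\X}$ is $\Z\times\Ker(p_*)$) and Remark \ref{zdouhenno}, while point (i) is the genuinely nontrivial $3$-manifold-topology input; everything else is bookkeeping with the $P$-sequence and the Thom isomorphism. Once Proposition \ref{centthm4o1} is established for $\X$, Theorem \ref{theorem} for a general connected quandle $X$ of finite type follows by naturality of the whole picture along $p:\X\to X$: Proposition \ref{kunosan} gives $p_*:H_3^{\rm gr}(\As(\X))\cong H_3^{\rm gr}(\As(X))$ mod $t_X$, the induced map on $\Pi_2$ is compatible with the $P$-sequences, and hence the splitting $\Theta_{\X}$ descends to a splitting $\Theta_X$ mod $t_X$, which together with Corollary \ref{cccor2} yields that $\Theta_X\oplus\mathcal{H}_X$ is an isomorphism mod $t_X$.
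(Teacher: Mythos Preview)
Your overall strategy---reduce to showing $\Theta_{\X}$ is surjective mod $t_X$, then invoke finite generation---is sound, and your observation that $H_2^Q(\X)_{(\ell)}=0$ (Theorem \ref{propf11ti82}) collapses the $P$-sequence so that $\tau_{(\ell)}$ is already an isomorphism is exactly right. The gap is in your proposed mechanism for surjectivity: you want to take an arbitrary $(M,f)\in\Omega_3(\As(\X))$ and realize $M$ as a $t_X$-fold \emph{cyclic} branched cover of $S^3$. Hilden--Montesinos gives irregular $3$-fold branched covers, not cyclic covers of prescribed degree $t_X$; cyclic branched covers of $S^3$ over links form a genuinely restricted class of $3$-manifolds, and there is no reason to expect the bordism class of an arbitrary $(M,f)$ to be hit by one, even after discarding $t_X$-torsion. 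Your parenthetical ``after possibly adjusting by something killed by $t_X$'' papers over precisely the hard step, and I do not see how to make it work.

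The paper avoids this by never asking for $M$ to be a branched cover. Instead it introduces an auxiliary map $\Upsilon_{\X}:\Pi_2(\X)\to\Omega_3(K(\As(\X),1),B\X_1)$ to a \emph{relative} bordism group, built by sending an $\X$-coloring $\mathcal{C}$ to the link complement $S^3\setminus\overline{L}$ (not its branched cover) with the group homomorphism $\Gamma_{\overline{\mathcal{C}}}$; the longitudes land in the $1$-skeleton thanks to Proposition \ref{propf11ti}. Surjectivity of $\Upsilon_{\X}$ (Lemma \ref{mpru}) is proved using the loop-space structure on the universal cover of $B\X$ \cite{Cla}, which forces the relative bordism group to be generated by $3$-submanifolds of $S^3$ with torus boundary; these are link complements, and the bijection of Proposition \ref{prop.homcol} then produces the required $\X$-colorings. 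Finally an explicit bordism between $t_X\cdot\delta_*\circ\Theta_{\X}$ and $t_X\cdot\Upsilon_{\X}$ identifies $\Theta_{\X}$ with the isomorphism $\delta_*^{-1}\circ\Upsilon_{\X}$ after localization. The point is that link \emph{complements} with controlled boundary behavior generate the relevant bordism group, whereas closed cyclic branched covers need not.
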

\begin{proof}[Proof of Theorem \ref{theorem}]
Take the $P$-sequences associated to the covering $p: \X \ra X$:
$$
\xymatrix{ H_3^{\gr }(\As(\X))_{(\ell)} \ar[rr]^{\tilde{\tau}_*} \ar[d]_{p_*} & & \Pi_2 (\X)_{(\ell)} \ar[rr] \ar[d]_{p_*} & & H_2^Q(\X)_{(\ell)}\ar[d]_{p_*} \ar[rr] & & H_2^{\gr}( \As(\X))_{(\ell)} = 0 \ar[d]_{p_*} \\
H_3^{\gr}( \As(X))_{(\ell)} \ar[rr]^{\tau_*} & & \Pi_2(X)_{(\ell)} \ar[rr] & & H_2^Q(X)_{(\ell)} \ar[rr] & & H_2 ^{\gr}( \As(X))_{(\ell)}=0 .}
$$
Since the left $p_* $ between group homologies is a $[1/t_X]$-isomorphism (see Theorem \ref{extthm} (iv) and (v)),
the TH-map $\Theta_{X} \oplus \mathcal{H}_{X}: \Pi_2(X)_{(\ell)} \ra H_3^{\gr}( \As(X))_{(\ell)} \oplus H_2^Q(X)_{(\ell)}$ is an isomorphism by the functoriality of $\Theta_X$ and Proposition \ref{centthm4o1}.
\end{proof}
\noindent
Thus, we shall aim
to prove Proposition \ref{centthm4o1} with respect to
the extended quandles $\X$.

For the proof, we will review a classical bordism theory (see \cite[\S 1.4]{CF}). Given a space-pair $(Y,A)$ with $A \subset Y$,
consider a continuous map
$$ f: (M, \partial M ) \lra (Y ,A),$$
where $M $ is an oriented compact $n$-manifold. Such two maps $f_1, \ f_2$ are {\it $G$-bordant},
if there exist an oriented compact manifold $W$ of dimension $n+1$ and a map $F:W \ra Y $ for which
\begin{enumerate}[(I)]
\item There is an $n$-dimensional submanifold $M' \subset \partial W$ satisfying $F( \overline{\partial W \setminus M'}) \subset A. $
\item There is a diffeomorphism $ g : (-M_1) \cup M_2 \ra M' $
preserving orientation such that $ (-f_1) \cup f_2 = (F | M') \circ g$.
\end{enumerate}
Then the {\it bordism group} of $(Y,A)$, denoted by $\Omega_n(Y,A)$, is defined to be
the set of all such map $f$ subject to the $G$-bordant relations.
We make this $\Omega_n(Y,A)$ into an abelian group by disjoint union.
Furthermore, $\{ \Omega_n(Y,A) \}_{n \geq 0} $ gives a homology theory (see \cite[\S 1.6]{CF}),
and the isomorphism $\Omega_n(Y,A) \cong H_n(Y,A) $, for $n \leq 3$, is obtained by the Atiyah-Hirzebruch spectral sequence.
Furthermore, if $Y$ is the Eilenberg-MacLane space $K(G,1)$ and $A$ is the empty set, then $ \Omega_n(Y,A)$ is
isomorphic to the group $\Omega_n(G)$ introduced in \S \ref{SS3j3outjlw}.


Using the bordism groups, we will construct a homomorphism \eqref{mpru44} below. 
Given an $\X$-coloring $\CC$ with $\# L$ link components, we take $t_X $-copies of $\CC$, and denote them by $\CC_j $ for $1 \leq j \leq m$.
Let us fix an arc of each link component of $\CC$, and consider a connected sum of these $\CC_1 , \dots, \CC_m$ (see Figure \ref{S24}).
Denote the resulting link by $\overline{L}$ and the associated $\X $-coloring of $\overline{L} $ by $\overline{\CC}$.
We then set a homomorphism $\Gamma_{\overline{\CC}} : \pi_1(S^3 \setminus \overline{L})\ra \As(\X) $ discussed in \eqref{Gammacdef}.
Note that each meridian of $\overline{L} $ is sent to be $e_g $ for some $g \in \X $ by definition.
Furthermore each longitudes $\mathfrak{l}_j \in \pi_1(S^3 \setminus \overline{L} )$ are sent to be zero.
Actually the formula \eqref{kantanji} says that the $ \Gamma_{\overline{\CC}} (\mathfrak{l}_j) $ lies in $\Ker (\varepsilon_{\X}) \cap \mathrm{Stab}(1_{\X})$,
which is equal to the abelian kernel $\Ker (p_*)$ and is annihilate by $t_X $ (see Theorem \ref{extthm}).
Consequently, the map $\Gamma_{\overline{\CC}}$ sends every boundaries of $S^3 \setminus \overline{L} $ to a 1-cell of $B\X$.
Here note that the 1-skeleton $B \X_1$ is, by definition, a bouquet of circles labeled by elements of $\X$.
In the sequel, considering all $\X$-coloring $\overline{\CC} $ and such homomorphisms $\Gamma_{\overline{\CC}}$ modulo the bordance relations,
the map $ \mathcal{C}\mapsto \Gamma_{\overline{\CC}} $ defines the desired homomorphism
\begin{equation}\label{mpru44}\Upsilon_{\X}: \Pi_2(\X ) \lra \Omega_3 ( K(\As (\X) ,1 ), B\X_1 ).
\end{equation}
Hereafter, we denote by $\Omega_3^{\rm rel}(\X)$ this relative bordism $ \Omega_3( K(\As(\X),1) , B\X_1)$, for simplicity.

\begin{figure}[htpb]
$$
\begin{picture}(20,80)
\put(-220,25){\pc{link.tomlinks2}{0.655}}

\put(-158,25){\huge $ \rightsquigarrow $}

\put(-203,50){ $ \gamma_0 $}
\put(-185,10){$ \gamma_1 $}

\put(-96,48){ $ \gamma_0 $}
\put(-78,8){$ \gamma_1 $}

\put(-57,48){ $ \gamma_0 $}
\put(-39,8){$ \gamma_1 $}

\put(13,48){ $ \gamma_0 $}
\put(31,8){$ \gamma_1 $}

\put(-203,28){\Large $ \CC $}

\put(-103,25){\Large $ \CC_1 $}
\put(-63,25){\Large $ \CC_2 $}

\put(8,25){\Large $ \CC_m $}

\put(89,49){\Large $ \overline{\CC} $}

\put(58,25){\huge $ \rightsquigarrow $}

\put(-29,26){\large $\cdots \cdot $}

\put(176, 26){\large $\cdots \cdot \cdot \cdot $}
\put(179, 46){\large $\cdots \cdot $}
\put(185, 6){\large $\cdots$}

\end{picture}
$$
\caption{\label{S24} Construction of $\overline{\CC}$ from $\CC$, when the link components of $\CC$ are two. }
\end{figure}

We now prove Proposition \ref{centthm4o1} by using the following lemma:
\begin{lem}\label{mpru}
The homomorphism $ \Upsilon_{\X }: \Pi_2(\X) \ra \Omega_3^{\rm rel}(\X)$ is surjective up to $t_X $-torsion.
\end{lem}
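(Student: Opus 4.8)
\textbf{Proof plan for Lemma \ref{mpru}.} The goal is to show that every class in $\Omega_3^{\rm rel}(\X) = \Omega_3(K(\As(\X),1), B\X_1)$ is hit by $\Upsilon_{\X}$ after inverting all primes coprime to $t_X$. The plan is to represent an arbitrary relative bordism class by a map $f:(M,\partial M)\to (K(\As(\X),1), B\X_1)$ with $M$ a compact oriented $3$-manifold, and then to realize this data by an $\X$-colored link via surgery-theoretic / Heegaard-splitting techniques. Concretely, first I would reduce to the case where $\partial M$ is connected (or empty): since $B\X_1$ is a wedge of circles, the restriction $f|_{\partial M}$ factors through $\pi_1(B\X_1)$, a free group, and up to bordism one can arrange $\partial M$ to be a union of tori on which $f$ sends a meridian to a generator $e_g$ of $\As(\X)$ and the longitude to an element of $\Ker(p_*)\subset\As(\X)$ (using Proposition \ref{propf11ti}: $\Ker(\varepsilon_{\X})\cap\mathrm{Stab}(1_{\X})=\Ker(p_*)$ is abelian and $t_X$-torsion). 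Because we work modulo $t_X$-torsion, the longitude contribution can be absorbed after localization.

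Second, I would use the standard fact that any compact oriented $3$-manifold $M$ with toral boundary is obtained by Dehn surgery on a link $L\subset S^3$, so that $M \cong S^3\setminus N(L)$ glued along some framing; the homomorphism $f_*:\pi_1(M)\to\As(\X)$ then pulls back, via the Wirtinger presentation, to an assignment of generators $e_{x_i}\in\As(\X)$ to the arcs of a diagram $D$ of $L$ — but one must check the assignment is an honest $\X$-coloring, i.e. the relations $\CC(\gamma_k)=\CC(\gamma_i)\lhd\CC(\gamma_j)$ hold. This is precisely where the hypothesis that $f$ sends meridians to the special generators $e_g$ ($g\in\X$), together with the presentation of $\As(\X)$ by the relations $e_x e_y = e_y e_{x\lhd y}$, forces the colouring condition by Proposition \ref{prop.homcol}. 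Third, I would compare the bordism class $\Upsilon_{\X}([\overline{\CC}])$ of the resulting coloured link with the original class $[f]$: both are represented by $3$-manifolds (a branched/unbranched cover of $S^3$ minus a link, versus $M$) mapping to $K(\As(\X),1)$, and one checks they differ by a class supported on the boundary $B\X_1$ — hence they agree in $\Omega_3^{\rm rel}$ after localization at $\ell$. The device of taking $t_X$ copies $\CC_1,\dots,\CC_m$ connected by arcs (Figure \ref{S24}) is exactly what makes the resulting homomorphism $\Gamma_{\overline{\CC}}$ land in $\Ker(\varepsilon_{\X})$ on longitudes and thus factor appropriately; I would verify that this "$t_X$-fold doubling" has the effect of multiplying the bordism class by $t_X$ (or a unit times $t_X$), which is invertible after $\ell$-localization, so surjectivity modulo $t_X$-torsion follows.

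The main obstacle I expect is the second step: going from an abstract $3$-manifold $M$ with a map to $K(\As(\X),1)$ to a genuine $\X$-coloured link diagram, in a way that is controlled up to relative bordism. One has to be careful that surgery descriptions are highly non-unique, and that the colouring condition (not just a group homomorphism $\pi_1\to\As(\X)$, but one whose meridians go to the $X$-labelled generators) is preserved; the key technical input is Proposition \ref{prop.homcol}, which identifies colourings with homomorphisms sending meridians to generators $e_x$ and fixing the colour under the longitude action. A secondary subtlety is matching orientations and framings so that the branched cover $\widehat{C}^{t_X}_{\overline{L}}$ appearing in the definition of $\theta_{X,D}$ and $\Theta_{\X}$ is identified with the manifold one built — this is where Lemma \ref{key231} and its proof (the $t$-fold branched cover of the trivial $2$-component link being $S^2\times S^1$, and the saddle-move argument) are reused to handle the indeterminacy. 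Since the proof is, as the authors note, modeled on \cite[\S 4,5]{HN}, I would largely import that machinery, replacing the class of $4$-fold symmetric quandles used there by the connected quandle $X$ of type $t_X$ and its extension $\X$, and checking that all the $t_X$-torsion bookkeeping goes through using Propositions \ref{lemwl}, \ref{propf11ti} and Corollary \ref{cccor1}.
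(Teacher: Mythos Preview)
Your proposal has a genuine gap at the second step, and it is precisely the step you flagged as the main obstacle. You want to pass from an arbitrary compact oriented $3$-manifold $M$ with toral boundary and a map $f:\pi_1(M)\to\As(\X)$ to an $\X$-coloured link in $S^3$. Your claim that ``$M\cong S^3\setminus N(L)$ glued along some framing'' is not correct: not every $3$-manifold with toral boundary is a link complement in $S^3$. Dehn surgery descriptions produce $M$ from $S^3$ by drilling and refilling along a link $L$, but then $\pi_1(S^3\setminus L)$ surjects onto $\pi_1(M)$ (with extra relations from the fillings), and there is no reason the homomorphism $f$ lifts to $\pi_1(S^3\setminus L)$ in a way sending meridians to generators $e_x$. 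So the passage from abstract bordism data to an $\X$-colouring breaks down.

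The paper circumvents this by a homotopy-theoretic argument you did not anticipate. It invokes Clauwens' result that the universal cover of $B\X$ is a topological monoid, hence a loop space $\Omega\mathcal{L}_X$ with $\mathcal{L}_X$ $2$-connected. The resulting fibration $B\X\to K(\As(\X),1)\to\mathcal{L}_X$ gives, after localization at $\ell$, an isomorphism $\Omega_3(K(\As(\X),1),B\X)_{(\ell)}\cong\Omega_3(\mathcal{L}_X)_{(\ell)}$; by Hurewicz the latter is generated by maps from $S^3$. Chasing through the chain of inclusions $S^1\subset B\X_1\subset B\X\subset K(\As(\X),1)$ (all of which induce isomorphisms on $H_2$ after localization), one concludes that $\Omega_3^{\rm rel}(\X)_{(\ell)}$ is already generated by classes represented by \emph{submanifolds of $S^3$} with torus boundary, i.e.\ genuine link complements. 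Only after this reduction does the conversion to an $\X$-colouring proceed, and even then there is a further trick you omitted: the meridians are a priori sent only to conjugates of \emph{powers} $e_a^{n_i}$, so one must replace each link component by $n_i$ parallel copies to arrange that each meridian maps to a single generator $e_y$, after which Proposition~\ref{prop.homcol} applies.
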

\begin{proof}[Proof of Proposition \ref{centthm4o1}]
We first explain the following diagram:
$$
\xymatrix{\ar[r]^{\!\!\!\!\!\!\!\!\!\!\!\!\!\!\!\!\!\!\!\!\!\!\!\! 0} & \Omega_3( \As(\X))_{(\ell)} \ar@{^{(}-_{>}}[rr]^{ \!\!\!\!\!\!\! \delta_*} \ar@{=}[d] & & \ \Omega_3^{\rm rel} (\X)_{(\ell)} \ar[r] & \Omega_2( B\X_1 )_{(\ell)}=0 \\
\ar[r]^{\!\!\!\!\!\!\!\!\!\!\!\!\!\!\!\!\!\!\!\!\!\!\!\! 0} & H_3( \As(\X) )_{(\ell)} \ar[rr]^{ \!\!\!\!\!\!\! \widetilde{\tau}_* } & & \Pi_2(\X)_{(\ell)} \ar[u]_{\Upsilon_{\X}} \ar[r] & H_2^Q ( \X )_{(\ell)} =0
}$$
Here the upper sequence is derived by the homology theory $\Omega_n $ with considering the pair $B \X_1 \hookrightarrow K( \As (\X ),1) $, and
the bottom one is obtained from the $P$-sequence of $\X$ with Theorems \ref{centthm} and \ref{extthm}.
Since $ \Pi_2(\X)$ is $[1/t_X]$-isomorphic to the finitely generated module $\Omega_3( \As (\X) )$ by assumption, the localized map of $ \Upsilon_{\X }$ is an isomorphism by Lemma \ref{mpru}.

Therefore, to accomplish the proof, it is sufficient to show the equality
\begin{equation}\label{9999} t_X \cdot \bigl( \delta_* \circ \Theta_{\X}([\mathcal{C}])\bigr) =t_X \cdot \Upsilon_{\X }([\mathcal{C}]) \in \Omega_3^{\rm rel} (\X), \end{equation}
for any $\X$-coloring $\mathcal{C}$. For this, put the resulting link $\overline{L} $ and coloring $\overline{\mathcal{C}} $ explained above.
Furthermore, take the $t_X$-fold cyclic covering $p: C_{\overline{L}}^{t_X} \ra S^3 \setminus \overline{L}$, and consider the natural inclusion $i_{ \mathcal{C}}: C_{\overline{L}}^{t_X} \subset \widehat{C}_{\overline{L}}^{t_X} $ by gluing the 2-handles along the boundary tori.
Here notice that the composite $ \theta_{\X,D} (\overline{\mathcal{C}}) \circ (i_{ \mathcal{C}})_* : \pi_1 (C_{\overline{L}}^{t_X} )\ra \As(X) $ coincides with $ \Gamma_{\overline{\mathcal{C}}} \circ p_* $ by the definition \eqref{sanatana}.
Furthermore notice that the inclusion $i_{ \mathcal{C}}$ gives a bordance relation between the $ \theta_{\X,D} (\overline{\mathcal{C}}) $ and this composite $ \theta_{\X,D} (\overline{\mathcal{C}}) \circ (i_{ \mathcal{C}})_* $.
Since the above map $\delta_*$ comes from the correspondences with $(M,f)$ to $(M,f)$ itself by definition, we thus have
$$ t_X \cdot \delta_* \circ \Theta_{\X}([\mathcal{C}]) = \delta_* \circ \Theta_{\X}([\overline{\mathcal{C}}]) = [ \theta_{\X,D} (\overline{\mathcal{C}})]= [\theta_{\X,D} (\overline{\mathcal{C}}) \circ (i_{ \mathcal{C}})_*]=[ \Gamma_{\overline{\mathcal{C}}} \circ p_*] \in \Omega_3^{\rm rel} (\X), $$
where the first equality is derived from $ t_X[\mathcal{C}]= [\overline{\mathcal{C}} ]\in \Pi_2(\X)$ from the definition of $ \overline{L}$.
We notice $ [ \Gamma_{\overline{\mathcal{C}}} \circ p_*]= t_X [ \Gamma_{\overline{\mathcal{C}} } ]\in \Omega_3^{\rm rel} (\X) $
since the projection $p$ takes the (relative) fundamental class of $C_{\overline{L}}^{t_X}$ to the $t_X$-multiple of that of $S^3 \setminus \overline{L}$.
Hence,
since $\Upsilon_{\X }([\mathcal{C}]) = \Gamma_{\overline{\cal C }}$ by definition,
we have the desired equality \eqref{9999}.
\end{proof}
To conclude this section, we will work out the proof of Lemma \ref{mpru}.

\begin{proof}[Proof of Lemma \ref{mpru}] 
To begin, we claim that the $\Z_{(\ell)}$-module $\Omega_3^{\rm rel}(\X)_{(\ell)}$ is generated by bordism classes represented by (normal) 3-submanifolds in $S^3$ with torus boundary components.

For this purpose, we first set up the isomorphism \eqref{e94} below.
Here refer to the fact \cite[\S 2.5]{Cla} that the universal covering of $B\X$ is a topological monoid; hence,
it is a based loop space of some space $\mathcal{L}_X$. We therefore have two homotopy fibrations
$$ \Omega \mathcal{L}_X \lra B\X \stackrel{c_*}{\lra} K(\As(\X),1) , \ \ \ \ \ \ B\X \stackrel{c_*}{\lra} K(\As(\X),1) \stackrel{\mathcal{P}_L}{\lra} \mathcal{L}_X .$$
From the right map $\mathcal{P}_L$, we obtain an isomorphism after localization at $\ell$:
\begin{equation}\label{e94} (\mathcal{P}_L)_*: \Omega_3(B\X, K ( \As(\X),1))_{(\ell)} \cong \Omega_3 (\mathcal{L}_X)_{(\ell)}. \end{equation}
However, since the $\mathcal{L}_X$ is 2-connected by definition, the Hurewicz theorem $\pi_3(\mathcal{L}_X) \cong_{[1/t_X] } \Omega_3 (\mathcal{L}_X) $ implies that
this $ \Omega_3 (\mathcal{L}_X)$ is generated by maps $S^3 \ra \mathcal{L}_X$.
Noticing that the map $ (\mathcal{P}_L)_*$ can be regarded as a map coming from collapse of each boundaries of manifolds,
this isomorphism \eqref{e94} implies that generators of the $ \Omega_3(K ( \As(\X),1) , B\X )_{(\ell)}$ are derived from 3-submanifolds in $S^3$.

Next, so as to verify the claim above, consider the inclusions
$$S^1 \subset B\X_1 \subset B\X \stackrel{c}{ \longhookrightarrow }K(\As(\X),1), $$
where the first is obtained by taking the circle labeled by $a \in \X $.
Notice from Theorem \ref{centthm} that these inclusions $S^1 \subset B\X_1 \subset K ( \As(\X),1)$ induce isomorphisms
$$H_2 (S^1)_{(\ell)} \cong H_2( B\X_1 )_{(\ell)} \cong H_2^{\gr }( \As(\X) )_{(\ell)} \cong 0.$$ 
Therefore, they yield isomorphisms
$$ \Omega_3( K( \As(\X),1), S^1 )_{(\ell)} \cong \Omega_3^{\rm rel}(\X)_{(\ell)} \cong \Omega_3(K ( \As(\X),1), B\X)_{(\ell)}. $$
Here note that, since the last term is generated by classes from 3-manifolds in $S^3$ as observed above and
$\pi_1(S^1) \cong \Z $ is abelian, the first term is
generated by classes from 3-manifolds in $S^3$ with torus boundaries\footnote{To be precise,
since the first homology of any closed surface is
generated by homology classes from some tori,
given a submanifold $M \subset S^3$ with $f: \pi_1(M) \ra \As(X) $ such that $f(\pi_1(\partial M)) \subset \Z$,
we can obtain another $M' \subset S^3$ with torus boundaries by attaching some 2-handles to $M$, and the maps $f$ extend to $\bar{f}: \pi_1(M') \ra \As(X) $ such that $\bar{f}(\pi_1(\partial M')) \subset \Z$.}.
Hence, so is the $\Omega_3^{\rm rel}(\X)_{(\ell)}$ as claimed.

Finally, to show the required surjectivity of $\Upsilon_{\X}$, we will prove that any generator $O$ of $ \Omega_3^{\rm rel}(\X)_{(\ell)} $
comes from some $\X$-coloring via the bijection \eqref{1v4}. By the previous claim, $ t_X ^{-2}\cdot O$ is represented by
a homomorphism $ f: \pi_1(S^3 \setminus L ) \ra \As (\X)$ for some link $L \subset S^3$.
Furthermore put the resulting link $\overline{L}$ in constructing $\Upsilon_{\X}$ in \eqref{mpru44}.
By repeating the process, we have another $\overline{\overline{L}} $.
Then the $f$ extends to two maps $\overline{f}: \pi_1(S^3 \setminus \overline{L} ) \ra \As (\X) $ and $\overline{\overline{f}}: \pi_1(S^3 \setminus \overline{\overline{L}} ) \ra \As (\X) $ canonically, where
because the class of the latter in $ \Omega_3^{\rm rel}(\X)_{(\ell)} $ equals the generator $O$.
Notice that, for each link component $1 \leq i \leq \# L$, with a choice of meridian element $\mathfrak{m}_i$,
the $ \overline{f} (\mathfrak{m}_i ) \in \As (\X)$ is conjugate to $e_a^{n_i}$ for some $n_i \in \mathbb{N}$, from the definition of $ \Omega_3^{\rm rel}(\X)_{(\ell)} $: 
Namely, in Wirtinger presentation of $\pi_1( S^3 \setminus \overline{L}) $, each arc $\alpha $ is labeled by $ e_{y_{\alpha }}^{n_i} $ for some $y_{\alpha } \in \X$; see \eqref{hasz2}.
Accordingly, replacing the $i$-th component of the link $\overline{L} $ by $n_i$-parallel copies of the component, we have another link $\overline{L}_{\rm P } $
and, then, can construct a canonical homomorphism $ \overline{f}_{\rm P}: \pi_1(S^3 \setminus \overline{L}_{\rm P}) \ra \As (\X ) $ by which each meridian of $ \overline{L}_{\rm P} $ is sent to $e_y$ for some $y \in \X$ (see Figure \ref{S241}).
We remark that, this $\overline{f}_{\rm P} $ sends the associated longitude $\mathfrak{l}_i $ of $\overline{L}_{\rm P}$ to $e_y^{t_X n_y}$ for some $ n_y \in \Z $, by the reason similar to the construction of $\Upsilon_{\X} $ in \eqref{mpru44}.
In particular, we have $ y \cdot \overline{f}_{\rm P} (\mathfrak{l}_i ) = y \cdot e_y^{t_ X n_y} = y \in \X$.
Hence, the bijection \eqref{1v4} admits an $\X$-coloring $ \mathcal{C}_f$ such that $ \Gamma_{ \mathcal{C}_f }= \overline{f}_{\rm P}: \pi_1(S^3 \setminus \overline{L}_{\rm P}) \ra \As (\X ) $.
Consequently, we have the equality $\Upsilon_{\X}([ \mathcal{C}_f ]) = O \in \Omega_3^{\rm rel}(\X)_{(\ell)}$ by construction, which implies the surjectivity.
\end{proof}

\vskip -1.137pc
\begin{figure}[htpb]
$$
\begin{picture}(20,80)
\put(-120,25){\pc{kouten.m3}{0.305}}

\put(-125,45){\large $ e_a^{n_i} $}

\put(-67,45){\large $ e_b^{n_j} $}
\put(-67,18){\large $ e_{a \lhd^{n_j} b } ^{n_i} $}

\put(15,45){\large $ e_a $}
\put(99.7,45){\large $ e_b $}
\put(99.7 ,8){\large $ e_{a \lhd^{n_j} b } $}

\put(-29,26){\huge $ \rightsquigarrow $}

\put(70.16,61){\Large $ \overbrace{ \ \ \ \ \ \ \ }^{n_j \textrm{-strands} } $}

\put(7,61){\Large $ \overbrace{ \ \ \ \ \ }^{n_i \textrm{-strands} } $}


\end{picture}
$$
\vskip -0.637pc
\caption{\label{S241}Construction from the $ \overline{f} : \pi_1(S^3 \setminus \overline{ L} ) \ra \As (\X) $ to $\overline{f}_{\rm P} : \pi_1(S^3 \setminus \overline{L}_{\rm P} ) \ra \As (\X) $. }
\end{figure}

\section{Examples; computations of $\Pi_2(X)$. }\label{twoto}
We will compute the group $\Pi_2(X)$ with respect to same quandles.
and do $\Pi_2(X)$ of connected quandles of order $\leq 8$ in \S \ref{twoto8}.
The fundamental line of the proofs is to more investigate the exact sequence in \eqref{kihon3} that we called the $P$-sequence.
Actually, the proofs result from computations of the terms $H_3^{\rm gr}(\As (X) )$ and $H_2^Q(X )$ with the TH maps concretely including these $t_X $-torsions.

\subsection{On Alexander quandles}\label{asssymAll}

We start reviewing Alexander quandles.
Any $\Z[T, T^{-1}] $-module $M $ is a quandle with the operation
$ x\lhd y=y +T( x- y) $ for $x,y\in M$, which is called {\it an Alexander quandle}.
The type is the minimal $N$ such that $T^N=\mathrm{id}_M$ since $ x\lhd^n y= y+ T^n (x -y) $.
As a typical example, with a choice of an element $\omega \in \F_q \setminus \{ 0,1\}$,
the quandle of the form $X=\F_q [T]/(T -\omega)$ is
called an {\it Alexander quandle on a finite field} $\F_q$.
Furthermore, an Alexander quandle $X$ is said to be {\it regular}, if $X $ is connected and its type is relatively prime to the order $|X|$, e.g.,
the Alexander quandles on $\F_q$ owing to $\omega^{q-1}=1$. 

We now discuss regular Alexander quandles $X$ of finite order.
\begin{cor}\label{thm1a}Let $X$ be a regular Alexander quandle of finite order.
Then, the TH-map is an isomorphism.
\end{cor}
\begin{proof}
As is known \cite[\S 4.1]{Nos3},
the $\ell$-torsion subgroups of the homologies $ H_2^{Q}(X) $ and $H_3^{\gr }(\As(X)) $ are zero.
Hence, the proof is immediately obtained from Theorem \ref{theorem}.
\end{proof}

\noindent We however remark that the torsion $ \Pi_2( X ) \otimes \Z/p$ of the Alexander quandles on $\F_q$ with $p\neq 2$
were already computed from another direction (see \cite[Appendix]{Nos2}).
Thus, we omit exampling concrete computations.

\subsection{On symplectic and orthogonal quandles over $\F_q $}\label{asssym}
We will show Theorem \ref{thm1b2} that computes
$\Pi_2(X)$ of symplectic and orthogonal quandles $X$ over finite fields.
Let us begin introduce the two quandles in details:

\begin{exa}[{Symplectic quandle}]\label{Sympex} Let $K$ be a field, and $\Sigma_g$ be the closed surface of genus $g$.
Define $X$ to be the first homology with $K $-coefficients away from $0$,
that is, $H^1(\Sigma_g; K ) \setminus \{ 0 \}= K^{2g} \setminus \{ 0 \}.$
Denote the symplectic form by $\langle, \rangle: H^1(\Sigma_g; K)^2 \ra K$.
Then, this set $X $ is made into a quandle by the operation $ x \lhd y := \langle x,y \rangle y +x \in X$ for any $x,y \in X$, and is called {\it a symplectic quandle (over $K $)}.
The operation $ \bullet \lhd y: X\ra X$ is called
{\it the transvection} of $y$, in the common sense.
Note that the quandle $X$ is of type $p=$Char$( K)$ since $ x\lhd^N y= N \langle x,y \rangle y +x $.
When $K$ is a finite field $\F_q$, we denote the quandle by $\mathsf{Sp}_q^g $.
\end{exa}

\begin{exa}[{Spherical quandle}]\label{sphex}
Let $K $ be a field of characteristic $ \neq 2$. Let $ \langle, \rangle : K ^{n+1} \otimes K^{n+1} \ra K $ be the standard symmetric bilinear form.
Consider a set of the form
$$ S^{n}_K := \{\ x \in K^{n+1} \ | \ \langle x, x \rangle =1\ \}. $$
We define the operation $ x \lhd y $ to be $ 2 \langle x,y \rangle y -x \in S^{n}_K $.
This pair $(S^{n}_K, \ \lhd )$ is a quandle of type 2, and is referred to as {\it a spherical quandle} (over $K $).
In the finite case $K=\F_q$, we denote the quandle by $S_q^n $.
\end{exa}
\noindent
As observed above, quandle consists of, figuratively speaking, `operations itself centered at $y \in X$', which can be described as homogenous spaces (see \cite[\S 7]{Joy} for detail).

To describe the theorem, we call $q= p^d \in \mathbb{N}$ {\it exceptional}, if the $q$ is one of $\{ 3, \ 3^2, \ 3^3, \ 5, \ 7 \}$,
that is, $d (p-1) \leq 6$ (cf. the condition in Theorem \ref{factsl22} later).

\begin{thm}\label{thm1b2}
Let $q = p^d$ be odd, and be not exceptional. 
\begin{enumerate}[(I)]
\item
Let $X$ be the symplectic quandle $\mathsf{Sp}_q^n$ over $\F_q$.
Then, the TH-map is
an isomorphism. Furthermore, 
$$\Pi_2(X) \cong \left\{
\begin{array}{ll}
\Z/(q^2 -1) , &\quad {\mathrm if \ } n>1 ,\\
\Z/(q^2 -1) \oplus (\Z/p)^d, &\quad {\mathrm if \ } n=1.
\end{array}
\right.$$
\item 
Let $X$ be the spherical quandle $S^{n}_{q}$ over $\F_q$. The TH-map is
a $[1/2]$-isomorphism.
Moreover, $$\Pi_2(X) \cong_{[1/2]} \left\{
\begin{array}{ll}
\Z/(q^2 -1) , &\quad {\mathrm if \ } n>2 ,\\
\Z/(q^2 -1) \oplus \Z/(q-\delta_q), &\quad {\mathrm if \ } n=2.
\end{array}
\right.$$Here $\delta_q = \pm 1 $ is according to $q \equiv \pm 1$ $(\mathrm{mod} \ 4)$.
\end{enumerate}
\end{thm}

\begin{proof}
The proof essentially relies on some works of Quillen and Friedlander, who had calculated
group homologies of some groups of Lie type over $\F_q$.
We list their works after the proof.

For (I), we will mention some homologies associated with the symplectic quandle $X=\mathsf{Sp}^{n}_q$.
As is shown \cite[\S 4.2]{Nos3}, $\As(X) \cong \Z \times Sp(2n;\F_q)$;
hence Theorems \ref{factsl21} and \ref{factsl22} below tell us the group homology $ H_3^{\rm gr}(\As (X)) \cong \Z/(q^2-1)$.
In addition, when $n \geq 2$ the second quandle homology $H_2^Q(X) $ vanishes; see \cite{Nos3};
Therefore the $P$-sequence \eqref{kihon2} is reduced to be
an epimorphism $\Z/(q^2 -1) \ra \Pi_2(X) \ra 0$.
Since $X$ is of type $p$, Theorem \ref{theorem} with $n \geq 2$ thus concludes the isomorphism $\Z/(q^2 -1) \cong \Pi_2(X) $ as required.

Next, we work out the case $n=1 $.
Note from \cite{Nos3} that the second quandle homology $H_2^Q(X) $ is $(\Z/p)^d$. 
Thereby the $P$-sequence \eqref{kihon2} is rewritten as
\begin{equation}\label{eesp} \ \Z/(q^2 -1) \lra \Pi_2(X) \lra (\Z/p)^d \lra 0 \ \ \ \ \ \ \ \ \mathrm{(exact)}. \end{equation}
Using Theorem \ref{theorem} again, we have $ \Pi_2(X) \cong \Z/(q^2 -1) \oplus (\Z/p)^d$ as required.

For (II), we similarly deal with the spherical quandle $X=S^{n}_q$ with $n \geq 2 $.
As is shown \cite[\S 4.3]{Nos3},
there is a [1/2]-isomorphism $ H_3^{\rm gr}(\As (X)) \cong H_3^{\rm gr}(O(n+1))$.
Then, it follows from Theorem \ref{factsl21} below that $ H_3^{\rm gr}(\As (X)) \cong \Z/(q^2-1)$ without 2-torsion.
Moreover, it is shown \cite[\S 4.3]{Nos3}
that if $n \geq 3$, $H_2^Q(X)$ is an elementary abelian 2-group, and
that if $n=2$ the quandle homology $H_2^Q(X) $ is $\Z / (q - \delta_q)$.
Hence, the desired isomorphism $\Pi_2(X) \cong \Z/(q^2 -1)$ results from the $P$-sequence \eqref{kihon2} and Theorem \ref{theorem}.
\end{proof}

As mentioned above, we review the group homologies of the symplectic groups $Sp(2g,\F_q) $ and the orthogonal groups $O(n;\F_q)$.
There is nothing new until the end of this subsection.
We start by recalling the homologies of $Sp(2,\F_q ) $ and $O(3;\F_q)$.
\begin{prop}\label{factsp2} If $p \neq 2$ and $q \neq 3, \ 9 $, then
the first and second homologies of $Sp(2g;\F_q)$ vanish, i.e., $H_1^{\rm gr} (Sp(2g,\F_q)) \cong H_2^{\rm gr} (Sp(2g,\F_q)) \cong 0.$

Furthermore, the $\ell$-torsions of the third homology of $Sp(2,\F_q)$ are expressed as
$$ H_3^{\rm gr}(Sp(2,\F_q))_{(\ell)} \cong \bigl( \Z /(q^2 -1) \bigr)_{(\ell)}, \ \ \ \ \mathrm{for} \ \ell \neq p . $$

On the other hand, the homologies $H_1^{\rm gr}(O(3,\F_q))$ and $H_2^{\rm gr}(O(3,\F_q))$ are annihilated by 2.
Furthermore, the $\ell$-torsions of the third homology of $O(3,\F_q)$ are expressed by
$$ H_3^{\rm gr}(O(3,\F_q))_{(\ell)} \cong \bigl( \Z /(q^2 -1) \bigr)_{(\ell)}, \ \ \ \ \mathrm{for} \ \ell \neq p, \ 2.\ $$
\end{prop}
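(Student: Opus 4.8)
The plan is to deduce all four assertions from the known stable and low-degree homology computations of symplectic and orthogonal groups over finite fields, which I would import as black boxes. First I would recall that $Sp(2g,\F_q)$ is a perfect group for $q\notin\{3\}$ (and $Sp(2,\F_q)=SL(2,\F_q)$ is perfect for $q\geq 4$), so $H_1^{\gr}(Sp(2g,\F_q))\cong 0$ outside the excluded cases. For $H_2^{\gr}$, the relevant input is the computation of the Schur multiplier of $Sp(2g,\F_q)$: it is trivial for $q$ odd except for the small exceptional cases, the genuinely exceptional one in the range being $Sp(2g,\F_3)$ (whence $q\neq 3$) and $Sp(4,\F_2)$-type coincidences (whence also $q\neq 9$ must be excluded because $Sp(2,\F_9)\cong SL(2,\F_9)$ has a nontrivial multiplier $\Z/3$). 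I would cite the classical tables (e.g. Steinberg's relations, or the references to Quillen--Friedlander that the paper promises to list) for these facts rather than reprove them.

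Next, for the third homology of $Sp(2,\F_q)=SL(2,\F_q)$, the key point is Quillen's description of the mod-$\ell$ homology of $SL$ over $\F_q$ for $\ell\neq p$: localized away from $p$, the homology of $SL(2,\F_q)$ agrees in low degrees with that of the homotopy fiber of $\psi^q-1$ on $BSU$, and in degree $3$ this yields $H_3^{\gr}(SL(2,\F_q))_{(\ell)}\cong (\Z/(q^2-1))_{(\ell)}$ for $\ell\neq p$. Concretely I would invoke the fact that $H_3^{\gr}(SL(2,\F_q))$ contains the image of $K_3(\F_q)\cong\Z/(q^2-1)$ and that, after inverting $p$, the stabilization map $H_3^{\gr}(SL(2,\F_q))\to H_3^{\gr}(SL(\F_q))\cong K_3(\F_q)$ is an isomorphism in this degree; this is exactly the content I expect to be stated as the promised Theorems \ref{factsl21}, \ref{factsl22}. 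So this part reduces to quoting the already-announced results.

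For the orthogonal group $O(3,\F_q)$, the strategy is to exploit the exceptional isomorphism $SO(3,\F_q)\cong PSL(2,\F_q)$ (for $q$ odd), together with $O(3,\F_q)\cong SO(3,\F_q)\times\{\pm 1\}$ since $-I$ is central of determinant $-1$ in odd dimension $3$. Thus, after inverting $2$, $H_*^{\gr}(O(3,\F_q))_{(\ell)}\cong H_*^{\gr}(PSL(2,\F_q))_{(\ell)}$, and the central extension $1\to\{\pm1\}\to SL(2,\F_q)\to PSL(2,\F_q)\to1$ shows that, after inverting $2$, $H_*^{\gr}(PSL(2,\F_q))_{(\ell)}\cong H_*^{\gr}(SL(2,\F_q))_{(\ell)}$ via the Lyndon--Hochschild--Serre spectral sequence (the kernel $\Z/2$ contributes nothing $\ell$-locally for $\ell\neq 2$). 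Combining this with the previous paragraph gives $H_3^{\gr}(O(3,\F_q))_{(\ell)}\cong(\Z/(q^2-1))_{(\ell)}$ for $\ell\neq p,2$. The statements that $H_1^{\gr}(O(3,\F_q))$ and $H_2^{\gr}(O(3,\F_q))$ are annihilated by $2$ follow the same way: $\ell$-locally for odd $\ell$ they coincide with $H_i^{\gr}(SL(2,\F_q))_{(\ell)}=0$ (using $q\neq 3,9$), so the only possible torsion is $2$-primary, and the determinant map $O(3,\F_q)\to\{\pm1\}$ together with $-I$ shows $H_1^{\gr}$ has a $\Z/2$ and no more.

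The main obstacle I anticipate is bookkeeping the precise list of small exceptional $q$: one must be careful that $q=9$ is genuinely excluded (because of the $\Z/3$ Schur multiplier of $SL(2,\F_9)$), that $q=3$ is excluded (imperfectness plus exotic multiplier of $Sp(2g,\F_3)$), and that for general genus $g$ no further exceptions intrude in degrees $\leq 2$ — this requires citing the stable-range results of Quillen--Friedlander and the finite-group tables carefully rather than hand-waving. The homotopy-theoretic inputs themselves (Quillen's fiber-sequence description, the exceptional isomorphisms) are standard; the risk is purely in getting the exclusion set right and in making sure the spectral-sequence arguments for $PSL$ versus $SL$ and for the direct factor $\{\pm1\}$ are stated $\ell$-locally so that no $2$-torsion sneaks into the $\ell\neq 2$ conclusions.
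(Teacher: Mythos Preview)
Your outline is essentially correct and in the same spirit as the paper's proof, which is simply a citation: ``See \cite[VIII.~\S 4]{FP} or \cite{Fri}, noting the order $|O(3,\F_q)|=2q(q^2-1)$.'' You have merely unpacked what is inside those references (Quillen's mod-$\ell$ computations, the exceptional isomorphism to rank-one groups, the LHS spectral sequence), which is fine and arguably more informative.

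One small slip: the exceptional isomorphism is $SO(3,\F_q)\cong PGL(2,\F_q)$, not $PSL(2,\F_q)$; check the orders, $|SO(3,\F_q)|=q(q^2-1)$ while $|PSL(2,\F_q)|=q(q^2-1)/2$ for $q$ odd. This does not damage your argument, since you work $\ell$-locally for $\ell\neq 2$ and the index-$2$ inclusion $PSL(2,\F_q)\subset PGL(2,\F_q)$ induces an isomorphism on homology after inverting $2$; but you should state the correct isomorphism. The paper's remark on $|O(3,\F_q)|$ is presumably there for the same reason: once you know the order, the $\ell$-local homology for $\ell\nmid 2p$ is governed by the $(q^2-1)$-part, and the cited references pin it down exactly.
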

\begin{proof} See \cite[VIII. \S 4]{FP} or \cite{Fri}, noting the order $ | O(3,\F_q)|=2q (q^2 -1)$.
\end{proof}

We moreover review the group homologies of $Sp(2g ; \mathbb{F}_{q})$ and $O(n, \F_q)$ as follows:

\begin{thm}[\cite{FP,Fri}]\label{factsl21} Let $q = p^d$ be odd.
The inclusion $Sp(2,\F_q) \hookrightarrow Sp(2n, \F_q)$ induce isomorphisms
$H_3^{\rm gr}(Sp(2,\F_q )) \cong_{(\ell)} H_3^{\rm gr}(Sp(2n,\F_q ) ) \cong_{(\ell)} \Z /(q^2 -1) $ localized at $\ell \neq p $.
Furthermore, for $n \geq 3$, the inclusion $O(3,\F_q) \hookrightarrow O(n, \F_q)$ induces isomorphisms
$H_3^{\rm gr}(O(3,\F_q)) \cong_{(\ell)} H_3^{\rm gr}(O(n, \F_q)) \cong_{(\ell)} \Z /(q^2 -1)$ localized at $\ell \neq p, \ 2$.
\end{thm}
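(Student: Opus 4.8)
This is essentially the content of \cite{FP} and \cite{Fri}; the strategy is to combine three ingredients: the stable homology of the classical groups over $\F_q$, computed by Quillen's method in the forms due to Friedlander and to Fiedorowicz--Priddy; the unstable base computations of Proposition \ref{factsp2}; and homological stability for the symplectic and orthogonal groups over a finite field.

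First I would recall the stable computation. After inverting $p$, Friedlander's Brauer-lift argument identifies the plus construction $BSp(\F_q)^{+}$ (resp.\ $BO(\F_q)^{+}$) with the homotopy fibre of $\psi^{q}-1$ acting on $BSp$ (resp.\ on $BO$); inspecting the low homotopy groups of that fibre gives $\pi_{1}=\pi_{2}=0$ and $\pi_{3}\cong\Z/(q^{2}-1)$ away from $p$ in the symplectic case, and the same away from $2p$ in the orthogonal case. Since these plus constructions are simply connected, the Hurewicz theorem yields
\[ H_{3}^{\gr}\bigl(Sp(\F_q)\bigr)_{(\ell)}\cong\Z/(q^{2}-1)_{(\ell)}\ \ (\ell\neq p),\qquad H_{3}^{\gr}\bigl(O(\F_q)\bigr)_{(\ell)}\cong\Z/(q^{2}-1)_{(\ell)}\ \ (\ell\neq 2,p), \]
where $Sp(\F_q)=\varinjlim_{n}Sp(2n,\F_q)$ and $O(\F_q)=\varinjlim_{n}O(n,\F_q)$. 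In fact Fiedorowicz--Priddy describe the homology $H_{*}(BSp(2n,\F_q);\F_{\ell})$ and $H_{*}(BO(n,\F_q);\F_{\ell})$ for every finite $n$ explicitly, as Hopf algebras, so the statement we want can be read off once we control how fast these homologies stabilise.

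Next I would run the descent from the stable range to $n=1$ (resp.\ $n=3$). Homological stability for $Sp(2n,\F_q)$ and for $O(n,\F_q)$ shows that the stabilisation maps are isomorphisms on $H_{i}$ once $n$ exceeds an explicit linear bound in $i$; specialising to $i=3$ and feeding in the explicit Fiedorowicz--Priddy description — which exhibits no new generator of $H_{3}(BSp(2n,\F_q);\F_{\ell})$ beyond $n=1$, and none of $H_{3}(BO(n,\F_q);\F_{\ell})$ beyond $n=3$ after inverting $2$ — one concludes that $H_{3}^{\gr}(Sp(2,\F_q))\to H_{3}^{\gr}(Sp(2n,\F_q))$ is an isomorphism away from $p$ for all $n\geq 1$, and likewise $H_{3}^{\gr}(O(3,\F_q))\to H_{3}^{\gr}(O(n,\F_q))$ is an isomorphism away from $2,p$ for all $n\geq 3$ (for the orthogonal base group one may use the isomorphism $O(3,\F_q)\cong\Z/2\times PGL_{2}(\F_q)$ underlying Proposition \ref{factsp2}, valid because $q$ is odd). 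Finally Proposition \ref{factsp2} pins down the common value as $\Z/(q^{2}-1)$ away from $p$ in the symplectic case and away from $2,p$ in the orthogonal case, which is the assertion.

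The main obstacle is the behaviour at the prime $2$ in the orthogonal case: there the interplay among $O$, $\mathrm{SO}$, the kernel of the spinor norm and $\mathrm{Spin}$ affects the $2$-primary homology in low degrees, the Fiedorowicz--Priddy Hopf algebra acquires extra $2$-torsion generators, and homological stability with $2$-local coefficients is considerably more delicate; this is precisely why the orthogonal statement is claimed only modulo $2$-torsion and why the whole orthogonal argument is carried out after inverting $2$. A lesser technical point is that Proposition \ref{factsp2} excludes the fields $q=3$ and $q=9$ from its base computation, so for those two values one would instead verify $H_{3}^{\gr}(SL_{2}(\F_q))$ and $H_{3}^{\gr}(O(3,\F_q))$ by hand; as $q=3,9$ are in any case among the exceptional fields barred in the applications (Theorems \ref{thm1b}, \ref{3homology}), this does not affect the use made of the theorem here.
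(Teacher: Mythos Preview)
Your plan is correct and follows essentially the same route as the paper: the paper's proof cites \cite{FP} for the fact that the inclusions already induce isomorphisms on $\Z/\ell$-cohomology, cites \cite[Theorem~1.7]{Fri} for the stable value $\Z/(q^2-1)$, and invokes Proposition~\ref{factsp2} to match the base case. Your sketch supplies more of the underlying mechanism (Brauer lift, plus constructions, the Hopf-algebra description) and is more careful about the $2$-torsion and the $q=3,9$ exclusions, but the architecture is the same.
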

\begin{proof} According to \cite{FP}, the inclusions
induces isomorphisms their cohomology with $\Z/ \ell$-coefficients.
Taking limits as $n \ra \infty$, their homologies are known to be $H_3^{\rm gr}(Sp(\infty,\F_q)) \cong_{(\ell)} \Z/(q^2 -1)$ and
$ H_3^{\rm gr}(O(\infty, \F_q)) \cong_{(\ell)} \Z /( q^2 -1) $; see \cite[Theorem 1.7]{Fri}.
Hence, by Propositions \ref{factsp2}, the induced maps on homologies localized at $\ell$ are isomorphisms.
\end{proof}

In addition, we focus on these $p$-torsion parts, and state the vanishing theorem.
\begin{thm}[{Quillen and \cite[\S 4]{Fri}}]\label{factsl22}
Let $q = p^d$ be odd.
If $ d (p-1) >6 $, then the $p$-torsion parts $H_3^{\rm gr}(Sp(2n,\F_q))_{(p)}$
and $H_3^{\rm gr}(O(n+2,\F_q))_{(p)}$ vanish for any $n \geq 1$.
Furthermore, if $n$ is enough large, the $p$-vanishing holds even for $d (p-1) \leq 6 $.
\end{thm}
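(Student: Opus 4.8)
The plan is to derive this citation-level statement from Quillen's analysis of the mod $p$ (co)homology of the finite classical groups, by combining a stable $p$-torsion-freeness, homological stability, and --- for the assertion valid for \emph{all} $n$ --- a uniform-in-rank vanishing range for unstable mod $p$ cohomology.

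First I would establish stable $p$-torsion-freeness. By the Brauer lift together with Quillen's identification of $BGL_\infty(\F_q)^+$, after localisation at $p$, with the homotopy fibre of $\psi^q - \id$ on $BU$, and the analogous identifications of $BSp_\infty(\F_q)^+$ and $BO_\infty(\F_q)^+$ with homotopy fibres of self-maps of $BSp$, respectively $BO$, due to Fiedorowicz--Priddy and Friedlander (see \cite{FP,Fri}), the homotopy groups of these plus-constructions are finite in each positive degree, with orders that are products of integers of the form $q^i - 1$ or $q^i + 1$. As $q = p^d$, every such factor is prime to $p$; hence $H_n^{\rm gr}(Sp(\infty,\F_q);\Z)$ and $H_n^{\rm gr}(O(\infty,\F_q);\Z)$ have no $p$-torsion for all $n$ --- compatibly with $H_3^{\rm gr} \cong \Z/(q^2-1)$ in Theorem \ref{factsl21}.

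Next, for the case of large $n$ with no hypothesis on $q$, I would invoke homological stability for the finite symplectic and orthogonal groups (Fiedorowicz--Priddy; see also \cite{FP}): the stabilisation maps $H_3^{\rm gr}(Sp(2n,\F_q)) \to H_3^{\rm gr}(Sp(\infty,\F_q))$ and $H_3^{\rm gr}(O(n+2,\F_q)) \to H_3^{\rm gr}(O(\infty,\F_q))$ are isomorphisms once $n$ exceeds an explicit linear bound. With the previous step this gives the degree-$3$ $p$-vanishing for all sufficiently large $n$.

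For the assertion valid for all $n \ge 1$ under $d(p-1) > 6$, the key input is Quillen's vanishing range for the unstable mod $p$ cohomology of finite Chevalley groups of classical type: for such $G(\F_q)$ with $q = p^d$, one has $H^i(G(\F_q);\F_p) = 0$ for $0 < i < d(p-1)/2$, uniformly in the rank. The mechanism is the comparison of $H^*(G(\F_q);\F_p)$ with the rational cohomology of the ambient algebraic group $G$ --- through the Frobenius endomorphism and the generic-cohomology formalism --- the latter vanishing in the stated range. Since $d(p-1)$ is even for odd $p$, the condition $d(p-1) > 6$ means $d(p-1) \ge 8$, so degree $3$ lies strictly below $d(p-1)/2$, whence $H^3(Sp(2n,\F_q);\F_p) = H^3(O(n+2,\F_q);\F_p) = 0$ for every $n$; as these groups are finite, the universal coefficient theorem then forces $H_3^{\rm gr}(\,\cdot\,;\Z)$ to be $p$-torsion-free. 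The hard part is exactly this last step --- pinning down the uniform-in-$n$ vanishing range and matching its numerical threshold with degree $3$ --- which is the core of Quillen's argument and the reason the statement is quoted from \cite{Fri} rather than reproved here; the remaining steps are routine plus-construction homotopy theory and homological stability.
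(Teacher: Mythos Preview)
The paper does not give its own proof of this theorem: it is stated as a citation to Quillen's work via \cite[\S 4]{Fri}, with no proof environment following it. Your proposal therefore goes well beyond what the paper does --- you have sketched the architecture of the cited argument (stable $p$-torsion-freeness from the plus-construction description of $BG_\infty(\F_q)^+$, homological stability for the large-$n$ statement, and Quillen's unstable vanishing range for the uniform-in-$n$ statement), whereas the paper simply invokes the result as a black box. Your outline is sound and matches the structure of the actual arguments in \cite{FP,Fri}; you also correctly identify that the numerical matching of the vanishing range with degree $3$ under $d(p-1) > 6$ is the substantive point, and that this is precisely why the paper defers to the literature rather than reproving it.
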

\begin{rem}\label{dowl2} As a result in \cite[Corollary 1.8]{Fri}, the inclusions $Sp(2n;\F_q) \hookrightarrow GL(2n;\F_q ) \hookrightarrow GL(\infty ;\F_q )$ induce the isomorphism between
the third homology $ H_3^{\rm gr}(Sp(2n ,\F_q)) $ and the Quillen $K$-group $K_3(\F_q)\cong H_3^{\gr }(GL(\infty ;\F_q)) \cong \Z/(q^2 -1)$,
if $ d (p-1) >6$ or $n$ is enough large.
For instance, following \cite[\S 4]{Fri}, we can see that, when $q =p= 5$ and $n \geq 7$, the third homology $ H_3^{\rm gr}(Sp(2n ,\F_q)) $ is $\Z/(5^2 -1)= \Z/24.$
We later use this result in \S \ref{s87}.
\end{rem}

\subsection{Connected quandles of order $\leq 8$. }\label{twoto8}

Furthermore, we focus on connected quandles $X$ of order $\leq 8$ as follows:
\begin{thm}\label{thm1c}
For any connected quandle $X$ of order $\leq 8$, the TH-map is an isomorphism $\Pi_2(X) \cong H_3^{\rm gr}(\As(X)) \oplus \mathrm{Im}(\mathcal{H}_X).$
\end{thm}
\noindent
This theorem is a generalization of the previous paper \cite[\S 4]{Nos1}, which dealt with $\Pi_2(X)$ of only quandles $X$ with $|X| \leq 6$.

In the subsequent subsections,
we will prove Theorem \ref{thm1c} by computing concretely $\Pi_2(X)$ from the list of connected quandles of order $\leq 8$.
In the proof, we will often use the T-map $\Theta_{\Pi \Omega}: \Pi_2(X) \ra \Omega_3(\Ker (\varepsilon_X))$ in Lemma \ref{key231}.


\subsubsection{Connected Alexander quandles of order 4, 8}\label{tetete}

We first determine the $\Pi_2(X)$ of the connected quandle of order $4$ as follows:
\begin{prop}\label{thmqq}
If $X$ is the Alexander quandle of the form $\Z[T]/(2, T ^2+ T +1 )$,
then $\Pi_2(X) \cong \Z/2 \oplus \Z /8.$
\end{prop}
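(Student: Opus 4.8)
The plan is to apply the $P$-sequence \eqref{kihon3} of the quandle $X = \Z[T]/(2,T^2+T+1)$ together with Theorem \ref{theorem}. First I would identify $X$ as a regular Alexander quandle: it is connected since $(1-T)X = X$ (as $1-T$ is invertible in $\F_4$), and its type is the minimal $N$ with $T^N = \id$, which is $N = 3$ because $T$ has order $3$ in $\F_4^\times$; since $|X| = 4$ and $t_X = 3$ are coprime, $X$ is regular and Corollary \ref{thm1a} applies, so the TH-map $\Theta_X \oplus \mathcal{H}_X$ is an isomorphism (in fact $H_*^{\gr}(\As(X))$ has no $3$-torsion by the lower central series, so the $P$-sequence is short exact on the nose). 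Hence it suffices to compute $H_3^{\gr}(\As(X))$ and $H_2^Q(X) = \mathrm{Im}(\mathcal{H}_X)$, and then check that the extension $0 \to H_3^{\gr}(\As(X)) \to \Pi_2(X) \to H_2^Q(X) \to 0$ splits as claimed (or compute $\Pi_2(X)$ directly and verify it is $\Z/2 \oplus \Z/8$).

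The second quandle homology is the easy ingredient: by Proposition \ref{H2dfg31sha} (the Alexander case of Eisermann's Theorem \ref{ehyouj2i3}), $H_2^Q(X)$ is a known quotient of $X \otimes_\Z X$; for $X = \F_4$ with $T = \omega$ a primitive cube root of unity this is a small $2$-group, and I expect $H_2^Q(X) \cong \Z/2$. The main work is $H_3^{\gr}(\As(X))$. By Clauwens \cite{Cla2} (cf. Appendix \ref{Sin2}), $\As(X)$ for a connected Alexander quandle is a nilpotent group of class $2$ which is a central extension of $X$ by a quotient of $\bigwedge^2 X$; concretely $\As(X)$ fits in $0 \to \Z \oplus H_2^Q(X) \to \As(X) \to \Z \ltimes X \to 0$ or can be described via the lower central series \eqref{lower}. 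I would compute $H_3^{\gr}$ of this explicit nilpotent group — using the split epimorphism $\varepsilon_X : \As(X) \to \Z$ and the Lyndon--Hochschild--Serre spectral sequence of $0 \to \Ker(\varepsilon_X) \to \As(X) \to \Z \to 0$, where $\Ker(\varepsilon_X)$ is the finitely generated nilpotent group $\As(\X)$ up to the ($3$-torsion-free, hence here irrelevant) covering map. The dimensions are small enough that the spectral sequence collapses after identifying the low homology of $\Ker(\varepsilon_X)$, and I expect to land on $H_3^{\gr}(\As(X)) \cong \Z/4$ or a group of order $8$.

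Granting $H_2^Q(X) \cong \Z/2$ and $H_3^{\gr}(\As(X))$ of order $8$ (a cyclic $\Z/8$, given the target), the $P$-sequence gives $|\Pi_2(X)| = 16$, and I must show $\Pi_2(X) \cong \Z/2 \oplus \Z/8$ rather than $\Z/16$ or $\Z/4 \oplus \Z/4$. For this I would exhibit an explicit element of order $8$ (coming from $\Theta_X$, i.e., from the $3$-fold branched cyclic cover of some small link, e.g. the trefoil or Hopf link, pushed into $H_3^{\gr}(\As(X))$) and an element of order $2$ not in its span, using the alternative computation of $\Pi_2(X) \otimes \Z/p$ for the Alexander quandles on $\F_q$ from \cite[Appendix]{Nos2} to pin down the $2$-primary structure; in particular the mod-$2$ reduction $\Pi_2(X) \otimes \Z/2$ should have rank $2$, ruling out $\Z/16$, while the order-$8$ class rules out $\Z/4 \oplus \Z/4$. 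The main obstacle I anticipate is precisely the extension problem: the $P$-sequence only determines $\Pi_2(X)$ up to extension, so the delicate part is the mod-$2$ rank computation (equivalently, showing the transgression $\tau$ in \eqref{kihon3} does not merge the order-$2$ and order-$8$ parts into a single cyclic summand), and this is where the explicit knot-colouring computation via $\Theta_{\Pi\Omega}$ of Lemma \ref{key231} does the real work.
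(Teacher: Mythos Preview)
Your setup via Corollary \ref{thm1a} is exactly right, but you then undercut yourself: once the TH-map $\Theta_X \oplus \mathcal{H}_X : \Pi_2(X) \to H_3^{\gr}(\As(X)) \oplus H_2^Q(X)$ is an \emph{isomorphism} (not merely an isomorphism modulo $t_X$), there is no extension problem to solve --- $\Pi_2(X)$ \emph{is} the direct sum. Your last paragraph, worrying about ruling out $\Z/16$ or $\Z/4 \oplus \Z/4$ and invoking mod-$2$ rank computations from \cite{Nos2}, is therefore unnecessary. The whole question reduces to computing the two summands.

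The genuine gap is the computation of $H_3^{\gr}(\As(X))$, which you leave at ``I expect $\Z/4$ or a group of order $8$''. The paper does not attack this via the generic Clauwens nilpotent description and a Lyndon--Hochschild--Serre spectral sequence; instead it uses the concrete identification (from \cite[Lemma 4.8]{Nos1}) that $\As(X) \cong Q_8 \rtimes \Z$, where $Q_8$ is the quaternion group of order $8$. Passing to the quotient by the type, $Q_8 \rtimes \Z/3$ is recognised as $Sp(2;\F_3)$, whose third homology is $\Z/24$ (Proposition \ref{factsp2}); the transfer then gives $H_3^{\gr}(\As(X)) \cong \Z/8$ modulo $3$-torsion. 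Since $H_2^Q(X) \cong \Z/2$ (which you correctly anticipate), the TH-isomorphism yields $\Pi_2(X) \cong \Z/8 \oplus \Z/2$ directly. Your spectral-sequence plan could in principle recover $H_3^{\gr}(Q_8 \rtimes \Z)$, but without first identifying $\Ker(\varepsilon_X)$ as $Q_8$ you have no handle on the input, and the $\Z/8$ (rather than $\Z/4$) is exactly the point. The paper also shortcuts by quoting \cite[Proposition 4.5]{Nos1}, which already narrows $\Pi_2(X)$ to $\Z/2 \oplus \Z/4$ or $\Z/2 \oplus \Z/8$; that is convenient but not essential once the $Q_8$ identification is in hand.
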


\begin{proof}
We first recall the fact \cite[Proposition 4.5]{Nos1} which says that $\Pi_2(X)$ is either
$ \Z/ 2 \oplus \Z/ 4$ or $ \Z/ 2 \oplus \Z/ 8$ and that $H_2^Q(X)\cong \Z/2$.
By the main theorem \ref{theorem}, hence it suffices to construct a $[1/3]$-isomorphism $H_3^{\rm gr} (\As(X)) \cong_{[1/3]} \Z/8$.
For this, note $\As (X) \cong Q_8 \rtimes \Z $, where $Q_8$ is
the quaternion group of order $8$
(see \cite[Lemma 4.8]{Nos1} for details).
Noting $\mathrm{Type}(X)=3$, consider the quotient $ Q_8 \rtimes \Z/3 $,
which is isomorphic to $Sp(2; \F_3)$.
By using Proposition \ref{factsp2} and the transfer, we thus have $H_3^{\rm gr} (\As(X)) \cong H_3^{\rm gr} (Sp(2; \F_3)) \cong \Z /8 $ up to $3$-torsion as desired.
\end{proof}
\begin{rem}\label{thmqq41}
We here note a relation between $\Pi_2(X)$ and quandle homology groups.
As is known, $H_2^Q(X) \cong \Z/ 2$ and $H_3^Q(X) \cong \Z/ 4$ \cite[Remark 6.10]{CJKLS}.
Namely, the summand $\Z/8$ of $\Pi_2(X)$ is evaluated not by the {\it quandle} cohomology,
but by the {\it group} cohomology $H^3_{\gr}(Q_8;\Z/8)$.
It is therefore sensible to deal with 2-torsion of the groups $ \Pi_2(X) $ in general.
\end{rem}

Next, let us consider two Alexander quandles of order $8$ of the forms $ X=\Z[T ]/(2,T ^3+ T ^2+1)$ and $ X=\Z[T ]/(2,T ^3+T +1)$.
Then the both $\Pi_2(X)$ were shown to be $ \Z/2$ \cite[Table 1]{Nos2}.
We remark that, as is known (see, e.g., \cite{Cla3}),
a connected Alexander quandle $X$ of order 4 or 8 is one of
the three quandles above; hence $\Pi_2(X)$ has been determined.


\subsubsection{Two conjugate quandles of order 6}\label{tetedwte}

In this subsection, we will calculate $\Pi_2(X)$ of two quandles $S_6$, $S_6'$ of order 6.
Here the quandle $S_6$ (resp. $S_6'$) is defined to be the set of elements of a conjugacy class in the symmetric group $\mathfrak{S}_4$
including $(12)\in \mathfrak{S}_4$ (resp. $ (1234)\in \mathfrak{S}_4$) with the binary operation $x\lhd y = y^{-1} x y$.


\begin{figure}[htpb]
$$
\begin{picture}(20,80)
\put(-120,25){\pc{pic12a}{0.35}}
\put(-143,20){\normalsize $(1432)$}
\put(-65,48){\normalsize $(1342)$}
\put(-52,23){\normalsize $(1423)$}

\put(40,25){\pc{3.4torus3}{0.325}}


\put(29,55){\normalsize $(12)$}
\put(29,36){\normalsize $(23)$}
\put(29,18){\normalsize $(24)$}
\put(29,-1){\normalsize $(12)$}
\end{picture}
$$
\caption{\label{S'S'6} An $S_6$-coloring $\CC_3$ of the trefoil knot $3_1$, and an $S_6'$-coloring $\CC_4$ of $T_{3,4}$.}
\end{figure}
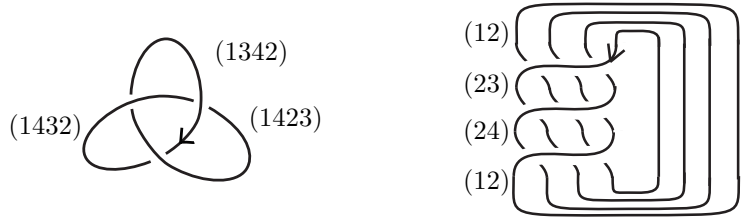

\begin{prop}\label{QSS6}
For the quandle $S_6$, $\Pi_2(S_6) \cong \Z/ 24 \oplus \Z /4$.
The first summand $\Z/24 $ is generated by $\Xi_{S_6, 3_1 }( \CC_3)$, where $\CC_3$ is a coloring of the trefoil knot shown as Figure \ref{S'S'6}.

On the other hand, for another quandle $S'_6$,
we have $\Pi_2(S'_6) \cong \Z/ 12$.
The generator is represented by $\Xi_{S_6' , T_{3,4}}( \CC_4)$, where $\CC_4$ is a coloring of the torus knot $T_{3,4}$ shown as Figure \ref{S'S'6}.
\end{prop}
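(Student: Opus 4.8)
The plan is to run the $P$-sequence \eqref{kihon3} for $X=S_6$ and for $X=S_6'$, pin down all of its terms including their $2$-torsion, and then name the generators by evaluating the displayed colourings through the $T$-map $\Theta_{\Pi\Omega}$ of Lemma \ref{key231}. I would begin with the elementary data. Both $S_6$ and $S_6'$ are conjugacy classes generating $\mathfrak S_4$ (the transpositions generate $\mathfrak S_4$; the $4$-cycles generate a conjugation-invariant subgroup containing odd permutations, hence all of $\mathfrak S_4$), so both are connected, and reading off orders of elements gives $t_{S_6}=2$ and $t_{S_6'}=4$. Since the conjugation action of $\mathfrak S_4$ on either class is faithful (trivial centre), $\mathrm{Inn}(S_6)\cong\mathrm{Inn}(S_6')\cong\mathfrak S_4$, so \eqref{AI} presents $\As(X)$ as a central extension $0\to\Ker(\psi_X)\to\As(X)\to\mathfrak S_4\to0$, and Corollary \ref{cccor1} shows $\Ker(\psi_X)\cong\Z\oplus H_2^{\rm gr}(\mathfrak S_4)\cong\Z\oplus\Z/2$ modulo $t_X$-torsion.

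Next I would compute $\As(X)$ more precisely. The central element $e_a^{t_X}$ of Lemma \ref{daiji} spans a free $\Z$ inside $\Ker(\psi_X)$, and $G_X:=\As(X)/\langle e_a^{t_X}\rangle$ is a finite central extension of $\mathfrak S_4$. For $S_6$, since $e_a^2$ acts trivially while $e_a$ maps to a transposition, the lift of every transposition to $G_{S_6}$ has order $2$, so $G_{S_6}$ is the double cover $GL(2;\F_3)$ of $\mathfrak S_4$, provided $\Ker(\psi_{S_6})$ genuinely carries its $\Z/2$-summand; one verifies this, e.g. via Appendix \ref{Sin}, so that $\As(S_6)$ is the non-split central $\Z$-extension of $GL(2;\F_3)$ with $\As(S_6)^{\rm ab}\cong\Z$. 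For $S_6'$ the same bookkeeping, now with $e_a^2$ mapping to a double transposition, gives $G_{S_6'}\cong\mathfrak S_4$ (no extra $\Z/2$). In either case the Gysin / Lyndon--Hochschild--Serre sequence of the central $\Z$ reads $H_3^{\rm gr}(\As(X))\to H_3^{\rm gr}(G_X)\xrightarrow{\cap e}H_1^{\rm gr}(G_X)\to H_2^{\rm gr}(\As(X))\to H_2^{\rm gr}(G_X)=0$, so $H_3^{\rm gr}(\As(X))=\ker(\cap e)$. Here $H_3^{\rm gr}(GL(2;\F_3))\cong\Z/24$ — the $\Z/8$ being the Quillen group $K_3(\F_3)$, carried by the Singer cycle $\F_9^\times=\Z/8\hookrightarrow GL(2;\F_3)$, for which I would cite Proposition \ref{factsp2} and \S\ref{asssym} — and $H_3^{\rm gr}(\mathfrak S_4)\cong\Z/2\oplus\Z/12$ is classical; a short cap-product check on the Singer cycle shows $\cap e$ vanishes on $H_3^{\rm gr}(GL(2;\F_3))$ and is the appropriate surjection on $H_3^{\rm gr}(\mathfrak S_4)$, giving $H_3^{\rm gr}(\As(S_6))\cong\Z/24$ and $H_3^{\rm gr}(\As(S_6'))\cong\Z/12$. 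Finally Theorem \ref{ehyouj2i3} applied to the relevant stabiliser computes $H_2^Q(S_6)\cong\Z/4$ (with $\mathcal H_{S_6}$ onto) and $H_2^Q(S_6')=0$.

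Feeding these into \eqref{kihon3} and using Theorem \ref{theorem} for the part prime to $t_X$, the proposition reduces to splitting the extension $0\to\Z/24\to\Pi_2(S_6)\to\Z/4\to0$ and to ruling out any collapse of $\Pi_2(S_6')\cong\Z/12$ at the prime $2$. For this I would evaluate the displayed colourings. The double cyclic branched cover of the trefoil is the lens space $L(3,1)$, with $\pi_1\cong\Z/3$ and classical fundamental class; pushing $[L(3,1)]$ forward along $\widehat{\Gamma}_{\CC_3}$ into $\Ker(\varepsilon_{S_6})$ and on to $H_3^{\rm gr}(\As(S_6))$ via the Thom homomorphism, and reading $\mathcal H_{S_6}(\CC_3)$ off the longitude through \eqref{kantan}, I would show $\Xi_{S_6,3_1}(\CC_3)$ has order exactly $24$ and generates a summand complementary to $\mathrm{Im}(\mathcal H_{S_6})$; with the upper bound from \eqref{kihon3} this forces $\Pi_2(S_6)\cong\Z/24\oplus\Z/4$ with the first factor generated by $\Xi_{S_6,3_1}(\CC_3)$. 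The same computation for $\CC_4$, using the $4$-fold cyclic branched cover of $T_{3,4}$ (the Brieskorn manifold $\Sigma(3,4,4)$, a Seifert fibered space whose $\pi_1$ and fundamental class are standard), shows $\Xi_{S_6',T_{3,4}}(\CC_4)$ generates $H_3^{\rm gr}(\As(S_6'))\cong\Z/12$, hence all of $\Pi_2(S_6')$ since $H_2^Q(S_6')=0$.

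The step I expect to be the main obstacle is the $2$-primary analysis for $S_6$: because $t_{S_6}=2$, Theorem \ref{theorem} gives nothing at the prime $2$, so the appearance of $\Z/8$ has to be produced by hand. This means verifying that $\Ker(\psi_{S_6})$ really contains the Schur-multiplier summand $\Z/2$ (so that the relevant finite quotient is $GL(2;\F_3)$ rather than merely $\mathfrak S_4$), computing $H_3^{\rm gr}(GL(2;\F_3))$ together with the cap-product with the extension class, and then splitting $0\to\Z/24\to\Pi_2(S_6)\to\Z/4\to0$ via the explicit order computation of $\Xi_{S_6,3_1}(\CC_3)$. By contrast $S_6'$ is comparatively clean: $H_2^Q(S_6')$ vanishes, so the cyclic group $\Pi_2(S_6')$ is forced as soon as $H_3^{\rm gr}(\As(S_6'))$ is known.
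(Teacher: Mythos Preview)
Your types are backwards relative to the proof the paper actually runs, and this is fatal. Look at Figure~\ref{S'S'6}: the colouring $\CC_3$ attached to $S_6$ uses $4$-cycles, while $\CC_4$ for $S_6'$ uses transpositions. The paper's textual definition is inconsistent with its own figure and proof; throughout the argument $S_6$ is the $4$-cycle quandle of type $4$ with $\Ker(\varepsilon_{S_6})\cong SL(2;\F_3)$ (binary tetrahedral), and $S_6'$ is the transposition quandle of type $2$ with $\Ker(\varepsilon_{S_6'})\cong A_4$. With your assignment $t_{S_6}=2$, the $T$-map of Lemma~\ref{key231} sends $\CC_3$ to the \emph{double} branched cover of $3_1$, namely $L(3,1)$ with $\pi_1\cong\Z/3$; any homomorphism out of $\Z/3$ can only hit $3$-torsion in $\Omega_3$, so there is no way to produce an element of order $24$ (or even $8$) from $\CC_3$. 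The paper instead uses the $4$-fold branched cover of the trefoil, which is $S^3/SL(2;\F_3)$; here $\theta_{X,D}(\CC_3)$ is literally $\mathrm{id}_{SL(2;\F_3)}$, generating $\Omega_3(SL(2;\F_3))\cong\Z/24$. The splitting of $0\to\Z/24\to\Pi_2(S_6)\to\Z/4\to0$ is then obtained by computing the longitude via \eqref{kantan}: one finds $\mathcal H_X([\CC_3])=2\in\Z/4$, so $[\CC_3]$ has order $24$ while its image in $H_2^Q$ has order $2$, forcing $\Z/24\oplus\Z/4$.

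The swap also corrupts the $S_6'$ side. For the transposition quandle the paper has $H_2^Q(S_6')\cong\Z/2$ and $H_2^{\rm gr}(\As(S_6'))\cong\Z/2$, not $0$; the $P$-sequence reads $H_3^{\rm gr}(\As(S_6'))\to\Pi_2(S_6')\to\Z/2\to\Z/2\to 0$ with the last map an isomorphism. The paper then uses the \emph{double} branched cover of $T_{3,4}$, again $S^3/SL(2;\F_3)$, and $\theta_{X,D}(\CC_4)$ is the central quotient $SL(2;\F_3)\twoheadrightarrow A_4$, which still generates $\Omega_3(A_4)\cong\Z/12$; since $|H_3^{\rm gr}(\As(S_6'))|\le 12$ was already known from \cite{Nos1}, this pins $\Pi_2(S_6')\cong\Z/12$. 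Your claimed identifications $G_{S_6}\cong GL(2;\F_3)$ and $H_2^Q(S_6')=0$ are both artefacts of the swap: for the transposition quandle $\Ker(\varepsilon)\cong A_4$ gives $|G|=24$, not $48$, and for the $4$-cycle quandle the centraliser of a $4$-cycle is cyclic of order $4$, giving $H_2^Q\cong\Z/4$.
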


\begin{proof}
We will show the sequence \eqref{e.s.541790} below.
It follows from the proof of \cite[Proposition 4.9]{Nos1} that $H_2^Q(S_6) \cong \Z/4 $ and $H_2^{\gr }(\As(S_6)) \cong 0$,
and that $H_3^{\rm gr}(\As(S_6)) $ is a quotient of $\Z /24$.
Hence the $P$-sequence \eqref{kihon3} becomes
\begin{equation}\label{e.s.541790}
\Z/24 \lra \Pi_2(S_6) \stackrel{\mathcal{H}}{\lra} H_2^Q(X) \ \bigl ( \cong \Z/4 \bigr) \ \lra 0.
\end{equation}

Next we will show that $\Pi_2 (S_6)$ surjects onto $\Z/2 4$.
It is shown \cite[Lemma 4.10]{Nos1} that
the kernel $ \Ker (\varepsilon_X)$ is the binary tetrahedral group $D_{24}= Sp(2;\F_3)$ whose third homology is $\Z/24 $ (see Proposition \ref{factsp2}).
Let $D_{24}$ act canonically the 3-sphere $S^3$.
Since the 4-fold covering branched over the trefoil is $S^3 / D_{24} $ (see \cite[\S 10.D]{Rolfsen}),
it can be easily seen that the map $\theta_{X,D}$ in \eqref{sanatana} sends the $X$-coloring $\CC_{3 }$ to an isomorphism $ \pi_1(S^3 / D_{24} ) \stackrel{\sim}{\ra} Sp(2;\mathbb{F}_3)$.
Since $\Omega_3( D_{24}) \cong \Z/24$ is known to be generated by the pair ($S^3 / D_{24} $, $\mathrm{id}_{D_{24}}$)
[see \cite[VI. Examples 9.2]{Bro}], the T-map $\Theta_{ \Pi \Omega}: \Pi_2 (S_6) \ra \Omega_3( D_{24}) $ in Lemma \ref{key231} turns out to be surjective.

Finally, for proving the decomposition $\Pi_2(S_6) \cong \Z/4 \oplus \Z/ 24$,
it is enough to show that the class $[\CC_3]\in \Pi_2(S_6) $ is sent to $2 \in H_2^Q(X) \cong \Z/4$ by the map $\mathcal{H}_X.$
By the formula \eqref{kantan},
$$\mathcal{H}_X ( [\CC_3])= \Gamma_{\CC_3}(\mathfrak{l})= e_{(1432)}^{-2} e_{(1432) } e_{(1423)} \in \Ker (\varepsilon_X) \cap \mathrm{Stab}(x_0) \cong \Z /4 . $$
An elementary calculation can show the square $\mathcal{H}( [\CC_3])^2 = 1 $ and $ \mathcal{H}( [\CC_3]) \neq 1 $ in $\As (X)$, although
we will not go into the details.
In the sequel, $\mathcal{H}( [\CC_3]) =2 \neq 0$ as desired.

\

Changing the subject, we will compute another $\Pi_2(S'_6)$.
For this, we now explain the sequence \eqref{e.s.5417} below.
It is shown \cite[Lemma 4.12 and Appendix A.2]{Nos1} that $H_2^Q(S_6' ) \cong H_2^{\gr }(\As(S_6')) \cong \Z/2$,
and to estimate the order $|H_3^{\rm gr}(\As(S_6')) | \leq 12$.
Hence, as a routine work, the $P$-sequence \eqref{kihon3} becomes
\begin{equation}\label{e.s.5417}
H_3^{\rm gr}(\As(S_6')) \lra \Pi_2(S_6') \stackrel{\mathcal{H}}{\lra} \Z/2 \lra \Z/2 \lra 0.
\end{equation}

For the sake of proving $\Pi_2(S_6') \cong \Z/12$, it is sufficient to show the
surjectivity of the T-map $\Theta_{\Pi \Omega} : \Pi_2(S_6') \ra \Z/12$.
As is shown \cite[Lemma 4.12]{Nos1},
the kernel $ \Ker (\varepsilon_X)$ is the alternating group $A_4 $ of order $12$ whose third homology is $\Z/12$.
Since the quandle $S_6'$ is of type $4$ and
the double cover branched over the knot $T_{3,4}$ is $S^3 / D_{24} $ (see \cite[\S 10. D and E]{Rolfsen}),
we can show that the map $\theta_{X,D}$ in \eqref{sanatana} sends the $X$-coloring $\mathcal{C}_{4 }$ to the epimorphism
$ \pi_1(S^3 / D_{24} )=D_{24} \ra A_4 $, which is a central extension.
Hence, the class $[\theta_{X,D}(\CC_4) ]$ is a generator of $\Omega_3(A_4)\cong \Z/12$.
This means the surjectivity of $\Theta_{\Pi \Omega} : \Pi_2(S_6') \ra \Z/12$; Recalling $ |H_3^{\rm gr}(\As(S_6')) | \leq 12$ and the sequence \eqref{e.s.5417} above,
the T-map $\Theta_{\Pi \Omega}$ is an isomorphism $ \Pi_2(S'_6) \cong \Z/ 12$.
Furthermore, by this process, the generator is represented by the coloring $\CC_4$.
\end{proof}
\subsubsection{The remaining quandle and the proof of Theorem \ref{thm1c}}\label{s87ojf}
Finally, the rest of connected quandle of order $8$ is the (extended) quandle $\X$ explained in \S \ref{s87ab3e}, where
$X$ is the Alexander quandle of the form $X=\Z[T]/(2, T^2 +T+1 ).$
Since $ \As (X) \cong Q_8 \rtimes \Z $ (see the proof of Proposition \ref{thmqq}), we have $|\X|=8$ by definition.

\begin{prop}\label{QSS8} Let $\X$ be the above quandle of order $8$.
Then $\Pi_2(\X) \cong \Z/8$.
\end{prop}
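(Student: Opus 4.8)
The plan is to run the $P$-sequence \eqref{kihon3} for the extended quandle $\X$ and to pin down its two flanking terms exactly; Theorem \ref{theorem} then determines $\Pi_2(\X)$. First I would record the structure of the associated groups. By \cite{Nos1} one has $\As(X)\cong Q_8\rtimes\Z$, where $Q_8$ is the quaternion group, the factor $\Z$ acts on $Q_8$ through an automorphism $\phi$ of order $3$, and $\varepsilon_X$ is the projection onto $\Z$; in particular $\Ker(\varepsilon_X)=[\As(X),\As(X)]\cong Q_8$, which is why $|\X|=8$. Since $H_2^Q(X)\cong\Z/2$ (\cite{Nos1}) while $H_2^{\gr}(\As(X))$ is, by the $P$-sequence of $X$, a quotient of $H_2^Q(X)$ and, by Corollary \ref{cccor1}, annihilated by $t_X=3$, it must vanish: $H_2^{\gr}(\As(X))=0$. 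Running the inflation–restriction sequence for the central extension $p_*\colon\As(\X)\to\As(X)$ exactly as in the proof of Proposition \ref{lemwl}, the kernel $\Ker(p_*)$ is a quotient of $H_2^{\gr}(\As(X))=0$, so $p_*$ is an isomorphism; hence $\As(\X)\cong Q_8\rtimes\Z$, $\Ker(\varepsilon_{\X})\cong Q_8$, and $H_2^Q(\X)\cong\Ker(p_*)=0$ by Theorem \ref{propf11ti82}.

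Next I would compute $H_3^{\gr}(\As(\X))=H_3^{\gr}(Q_8\rtimes\Z)$ exactly. The Wang exact sequence of $1\to Q_8\to Q_8\rtimes\Z\to\Z\to1$ reads
$$ H_3^{\gr}(Q_8)\xrightarrow{\ \phi_*-\mathrm{id}\ }H_3^{\gr}(Q_8)\lra H_3^{\gr}(Q_8\rtimes\Z)\lra H_2^{\gr}(Q_8)=0, $$
using $H_2^{\gr}(Q_8)=0$. As $Q_8$ acts freely on $S^3$ its homology is $4$-periodic and $H_3^{\gr}(Q_8)\cong\Z/8$; the automorphism $\phi_*$ of $\Z/8$ induced by the order-$3$ map $\phi$ is necessarily the identity, because $\mathrm{Aut}(\Z/8)\cong(\Z/8)^{\times}$ is a $2$-group and contains no element of order $3$. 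Hence $\phi_*-\mathrm{id}=0$ and $H_3^{\gr}(\As(\X))\cong\Z/8$, with trivial $3$-torsion.

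Finally, since $H_2^Q(\X)=0$ and $H_2^{\gr}(\As(\X))\cong H_2^{\gr}(\As(X))=0$, the $P$-sequence \eqref{kihon3} of $\X$ collapses to a surjection $H_3^{\gr}(\As(\X))\cong\Z/8\twoheadrightarrow\Pi_2(\X)$, so $\Pi_2(\X)$ is cyclic of order dividing $8$, in particular a $2$-group. By Theorem \ref{theorem} the TH-map $\Theta_{\X}\oplus\mathcal{H}_{\X}\colon\Pi_2(\X)\to H_3^{\gr}(\As(\X))\oplus H_2^Q(\X)=\Z/8$ is an isomorphism modulo $3$-torsion; as source and target are $2$-groups it is a genuine isomorphism, giving $\Pi_2(\X)\cong\Z/8$. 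For concreteness, a generator can be realized by an $\X$-colouring of the trefoil $3_1$, whose $3$-fold cyclic branched cover is the quaternionic space form $S^3/Q_8$ \cite{Rolfsen}: choosing the colouring so that the map $\theta_{\X,D}$ of \eqref{sanatana} sends $\pi_1(S^3/Q_8)=Q_8$ isomorphically onto $\Ker(\varepsilon_{\X})\cong Q_8$, its class maps to a generator of $\Omega_3(Q_8)\cong\Z/8$, and the T-map $\Theta_{\Pi\Omega}$ of Lemma \ref{key231} is surjective. The hard part is precisely the exact computation of $H_3^{\gr}(Q_8\rtimes\Z)$: Proposition \ref{thmqq} only controls it away from the prime $3$, whereas here killing the $3$-torsion is the whole issue, and it rests on the (elementary but essential) absence of order-$3$ elements in $\mathrm{Aut}(\Z/8)$ together with the identification $\As(\X)\cong\As(X)$, which itself needs $H_2^{\gr}(\As(X))=0$.
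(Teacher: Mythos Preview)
Your proof is correct and follows essentially the same route as the paper's: show $p_*\colon\As(\X)\to\As(X)$ is an isomorphism via $H_2^{\gr}(\As(X))=0$, deduce $H_2^Q(\X)=0$ from Theorem~\ref{propf11ti82}, compute $H_3^{\gr}(\As(\X))\cong\Z/8$, reduce the $P$-sequence to a surjection $\Z/8\twoheadrightarrow\Pi_2(\X)$, and close with the main theorem modulo $t_X=3$ plus the observation that both sides are $2$-groups. The only real difference is bookkeeping: the paper cites the proof of Proposition~\ref{QSS6} for $H_3^{\gr}(Q_8\rtimes\Z)\cong\Z/8$ and invokes Theorem~\ref{3hom9} at the end, whereas you run the Wang sequence for $Q_8\to Q_8\rtimes\Z\to\Z$ directly and appeal to Theorem~\ref{theorem}. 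Your Wang argument (using $H_2^{\gr}(Q_8)=0$ and the absence of order-$3$ elements in $\mathrm{Aut}(\Z/8)$) is in fact more self-contained than the paper's cross-reference.
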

\begin{proof}
We first show that the induced map $p_* : \As (\X) \ra \As (X)$ is an isomorphism. 
As is seen in the proof of Proposition \ref{thmqq}, we note
$H_2^{\rm gr}(\As (X)) \cong 0$ and $H_1^{\rm gr}(\As (X)) \cong \Z$.
Since the map $p_*$ is a central extension (see Theorem \ref{extthm}(iii)), $p_*$ is an isomorphism.
Next we will show $ \Pi_2(\X) \cong \Z/8$ as follows.
Note $H_2^Q(\X) \cong 0$ from Theorem \ref{extthm} (iii),
and $H_3^{\rm gr}(\As (\X)) \cong H_3^{\gr} (Q_8 \rtimes \Z ) \cong \Z /8$ from the proof of Proposition \ref{QSS6}.
Therefore the $P$-sequence
is reduced to be an epimorphism $\Z/8 \ra \Pi_2(\X)$.
By Theorem \ref{theorem} again, we conclude $ \Pi_2(\X) \cong \Z/8$. 
\end{proof}

\begin{proof}[Proof of Theorem \ref{thm1c}]
One first deals with quandles $X$ with $|X| = 3,5,7$.
Then $X$ is shown to be an Alexander quandle over the finite field $\F_{|X|}$ \cite{EGS}.
By Theorem \ref{thm1a}, the isomorphism $\Theta_X \oplus \mathcal{H}_X $ holds for such $X$.
Next, a connected quandle of even order with $|X | \leq 8$ is known to be one of the quandles in the previous subsections (see \cite{Cla3}).
Hence the above proposition completes the proof.
\end{proof}
\subsection{Dehn quandle of genus $ \geq 7$}\label{s87}

This subsection deals with Dehn quandles. To discuss this, we fix some notation:

\vskip 0.4pc
\noindent
{\bf Notation.} Denote by $\Sigma_{g,k}$ the closed surface of genus $g$ with $k$ boundaries as usual.
Let $ \mathcal{M}_{g,k}$ denote the mapping class group of $\Sigma_{g,k} $ which is the identity on the $k$-boundaries.
In the case $k=0$, we often suppress the symbol $k$, e.g., $\Sigma_{g,0}=\Sigma_{g}$.

\vskip 0.4pc

We now review Dehn quandles \cite{Y}. Consider the set, $\D$, defined to be
\begin{equation}\label{defdg} \notag \D:= \{ \ \textrm{ isotopy classes of (unoriented) non-separating simple closed curves } \gamma \ {\rm in \ } \Sigma_g \ \}. \end{equation}
For $ \alpha, \ \beta \in \D $, we define $\alpha \tri \beta \in \D$ by $ \tau_{\beta}(\alpha)$,
where $ \tau_{\beta} \in \M_{g} $ is the positive Dehn twist along $\beta$.
The pair ($\D, \lhd)$ is a quandle, and called {\it (non}-{\it separating) Dehn quandle}.
As is well-known, any two non-separating simple closed curves are
related by some Dehn twists. Hence, the quandle $\D$ is connected, and is not of any type $t_X $.
In addition, since the Dehn twists are transvections in the view of the cohomology $H^1(\Sigma_g ;\F_p )$, for any prime $p$, we have a quandle epimorphism $\mathcal{P}_p$ from $\D$ to
the symplectic quandle $\mathsf{Sp}^g_p$. Concisely, $ \mathcal{P}_p: \D \ra \mathsf{Sp}^g_p. $
The Dehn quandle $\D$ is applicable to study 4-dimensional Lefschetz fibrations (see, e.g., \cite{Y,Nos3}).

We now aim to compute the second homotopy groups $\pi_2(B \DD)$ in a stable range as follows:
\begin{thm}\label{hom3ocdr}
Let $g \geq 7 $.
The group $ \Pi_2( \DD)$ is isomorphic to either $\Z/24 $ or $ \Z/48 $.
Furthermore, a generator of $\Pi_2 (\DD)$ is represented by a $\D$-coloring in Figure \ref{koutssnpn}.
\end{thm}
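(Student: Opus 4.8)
The plan is to isolate the torsion of $\pi_2(B\DD)$ and squeeze it between an explicit lower bound and a cohomological upper bound. As $\DD$ is connected, \eqref{bxbx} gives $\pi_2(B\DD)\cong\Z\oplus\Pi_2(\DD)$, so the task is to show $\Pi_2(\DD)\cong\Z/24$ or $\Z/48$ and that the colouring of Figure \ref{koutssnpn} generates it. The lower bound will come from the quandle epimorphism $\mathcal P_5\colon\DD\to\SQ_5^g$ to the symplectic quandle over $\F_5$; the upper bound from the $P$-sequence \eqref{kihon3} for $\DD$ itself, which forces me to understand $\As(\DD)$ and its third group homology.

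For the lower bound I would first compute $\Pi_2(\SQ_5^g)\cong\Z/24$ for $g\ge7$. Although $q=5$ is exceptional, Proposition \ref{dfg31sp} gives $\As(\SQ_5^g)\cong\Z\times Sp(2g;\F_5)$, Proposition \ref{factsp2} gives $H_1^{\gr}(Sp(2g;\F_5))\cong H_2^{\gr}(Sp(2g;\F_5))\cong0$, and Remark \ref{dowl2} (after Friedlander) gives $H_3^{\gr}(Sp(2g;\F_5))\cong K_3(\F_5)\cong\Z/24$ for $g\ge7$; Künneth then yields $H_3^{\gr}(\As(\SQ_5^g))\cong\Z/24$ and $H_2^{\gr}(\As(\SQ_5^g))\cong0$, while $H_2^Q(\SQ_5^g)\cong0$ for $g>1$ by Proposition \ref{dfg31spk}. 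So \eqref{kihon3} collapses to $H_3(B\SQ_5^g)\xrightarrow{c_*}\Z/24\xrightarrow{\tau}\Pi_2(\SQ_5^g)\to0$, and since Theorem \ref{centthm} kills $c_*$ by $t_X=5$ while $5$ and $24$ are coprime, $c_*=0$ and $\Pi_2(\SQ_5^g)\cong\Z/24$. Now $\mathcal P_5$ is surjective (every nonzero vector of $\F_5^{2g}$ lifts to a primitive class of $H_1(\Sigma_g;\Z)$, hence is realised by a non-separating simple closed curve), but $\mathcal P_5$ is not a quandle covering, so this does not by itself give surjectivity on $\Pi_2$. Instead I would exhibit the concrete $\DD$-colouring $\CC_0$ of Figure \ref{koutssnpn} --- a link coloured by a configuration of curves carrying a symplectic basis of $\Sigma_g$, in the spirit of the colourings used in \S\ref{twoto8} for quandles of order $\leq 8$ --- and check, using the $T$-map of Theorem \ref{keykeipp} and the identification just made, that $\mathcal P_5\circ\CC_0$ generates $\Pi_2(\SQ_5^g)\cong\Z/24$. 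This fixes the generator and gives $\Pi_2(\DD)\twoheadrightarrow\Z/24$, hence $24$ divides $|\Pi_2(\DD)|$.

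For the upper bound I would analyse $\As(\DD)$. For $g\ge3$ the group $\M_g$ acts faithfully on isotopy classes of non-separating simple closed curves, so \eqref{AI} presents $\As(\DD)$ as a central extension of $\M_g$; combining its five-term exact sequence with $H_1^{\gr}(\M_g)=0$, $H_2^{\gr}(\M_g)\cong\Z$ (for $g\ge4$) and $\As(\DD)_{\mathrm{ab}}\cong\Z$ identifies $\As(\DD)$ with $\Z$ times the Gervais central extension of $\M_g$. Feeding the low-degree stable integral homology of $\M_g$ --- $H_1=0$, $H_2\cong\Z$, $H_3\cong\Z/12$, and partial information on $H_4$ --- into the Lyndon--Hochschild--Serre spectral sequence of this extension gives $H_2^{\gr}(\As(\DD))\cong0$ and shows $H_3^{\gr}(\As(\DD))$ is cyclic of order $24$ or $48$, the ambiguity lying in its $2$-torsion; meanwhile Eisermann's formula (Theorem \ref{ehyouj2i3}) applied to the stabiliser of a non-separating curve --- whose image in $\M_g$ is controlled by the cut surface $\Sigma_{g-1,2}$ --- gives $H_2^Q(\DD)\cong0$. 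Then \eqref{kihon3} reads $H_3(B\DD)\xrightarrow{c_*}H_3^{\gr}(\As(\DD))\xrightarrow{\tau}\Pi_2(\DD)\to0$, so $\Pi_2(\DD)$ is a cyclic quotient of a group of order $24$ or $48$; the only such quotients surjecting onto $\Z/24$ are $\Z/24$ and $\Z/48$, which is the assertion.

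The hard part is the upper bound, and the residual ambiguity is entirely $2$-primary. Determining $\As(\DD)$ precisely and pushing the spectral sequence through the only partially understood $2$-torsion of the stable homology of $\M_g$ is the delicate step; moreover, because $\DD$ has no finite type, Theorem \ref{centthm} gives no control over the edge homomorphism $c_*$, whose image is either $0$ or an order-$2$ subgroup --- a second $2$-primary indeterminacy. Away from the prime $2$ the homomorphism $\mathcal P_5$ determines $\Pi_2(\DD)$ completely, so what survives is exactly the factor $2$ between $\Z/24$ and $\Z/48$, the same flavour of $2$-torsion subtlety already seen in Remark \ref{thmqq41}.
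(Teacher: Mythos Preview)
Your overall architecture—squeeze $\Pi_2(\DD)$ between a lower bound coming from $\mathcal P_5\colon\DD\to\SQ_5^g$ and an upper bound from the $P$-sequence \eqref{kihon3}—is exactly the paper's strategy, and your treatment of the lower bound (computing $\Pi_2(\SQ_5^g)\cong\Z/24$ for $g\ge7$ and realising a generator by the explicit colouring) is essentially correct, though the paper makes the generator concrete via the trefoil and the Poincar\'e sphere $\Sigma(2,3,5)$.

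The upper bound, however, contains two genuine errors which happen to cancel. First, $H_2^Q(\DD)\not\cong0$; it is $\Z/2$ (Proposition \ref{eahges}). Eisermann's formula requires the abelianisation of the stabiliser in $\As(\DD)$, and that stabiliser, while mapping onto the perfect group $\mathcal M_{g-1,2}$, picks up an extra $\Z/2$ from the involution exchanging the two sides of the curve (the map $\lambda$ in \eqref{kkkl}). Second, $H_3^{\gr}(\As(\DD))$ is not ambiguous: it is exactly $\Z/24$ (Lemma \ref{Les64t2}). The paper obtains this cleanly by identifying $H_3^{\gr}(\mathcal T_g)$ with $\pi_3(B\mathcal M_g^+)$ via the plus construction and then invoking Madsen--Tillmann for $\pi_3(B\mathcal M_\infty^+)\cong\Z/24$; your Lyndon--Hochschild--Serre approach would require control of the differential $d_2\colon H_4(\mathcal M_g)\to\Z$, which you do not supply, and in any case cannot yield ``$24$ or $48$'' without that input.

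With the correct values the $P$-sequence reads $\Z/24\to\Pi_2(\DD)\to\Z/2\to0$, so $\Pi_2(\DD)$ has order $24$ or $48$. To rule out $\Z/24\oplus\Z/2$ and prove cyclicity (which you get for free from your incorrect $H_2^Q=0$), the paper needs the additional fact that $(\mathcal P_5)_*\colon H_3^{\gr}(\As(\DD))\to H_3^{\gr}(\As(\SQ_5^g))$ is \emph{not} surjective (Lemma \ref{agh}: it factors through $H_3^{\gr}(\mathcal M_g)\cong\Z/12$). This, combined with surjectivity of $(\mathcal P_5)_*$ on $\Pi_2$, forces the extension to be cyclic by an elementary diagram chase. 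Your proposal misses this step entirely.
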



\begin{proof}
We first observe homologies of the associated group $\As( \DD)$.
Note the well-known facts $H_1^{\rm gr} (\M_g) \cong 0 $ and $H_2^{\rm gr} (\M_g) \cong \Z$ (see \cite{FM});
Gervais \cite{Ger} showed the isomorphism $\As (\DD) \cong \Z \times \mathcal{T}_g $,
where $ \mathcal{T}_g $ is the universal central extension of $ \M_g$.
Then, Lemma \ref{Les64t2} below and Kunneth theorem immediately imply $H_2^{\rm gr} (\As (\DD)) \cong 0 $ and $H_3^{\rm gr}(\As (\DD)) \cong \Z/24$.

Next, we study the $P$-sequences in respect to the above epimorphism $\mathcal{P}_5 : \DD \ra \mathsf{Sp}^g_5$ with $p=5.$
Let us use the facts $H_2^Q(\D) \cong \Z /2 $
and $ H_2^Q( \mathsf{Sp}^g_5) \cong 0 $; see \cite[\S 4.2 and \S 4.6]{Nos3}.
Then, these $P$-sequences are written in
$$
\xymatrix{ \Z/ 24 \ar[d]_{(\mathcal{P}_5)_* } \ar[rr]& & \Pi_2 (\D ) \ar[d]_{(\mathcal{P}_5)_* } \ar[rr]& & H_2^Q(\D) \bigl(\cong \Z /2 \bigr) \ar[r] \ar[d]_{(\mathcal{P}_5)_* } & 0 & (\mathrm{exact}) \ \\
H_3^{\rm gr}(\As (\mathsf{Sp}^g_5)) \ar[rr]_{\delta_*} & & \Pi_2( \mathsf{Sp}^g_5) \bigl( \cong \Z /24 \bigr)\ar[rr] & & H_2^Q( \mathsf{Sp}^g_5) \bigl( \cong 0 \bigr) \ar[r] & 0 & (\mathrm{exact}).}
$$
Here the proof of Theorem \ref{theorem} says that the bottom $\delta_*$ is an isomorphism $ H_3^{\rm gr}(\As (\mathsf{Sp}^g_5)) \ra \Pi_2( \mathsf{Sp}^g_5) \cong \Z/24$.

Furthermore, we claim that the middle map $(\mathcal{P}_5)_*: \Pi_2 (\D )\ra \Pi_2(\mathsf{Sp}^g_5)$ is surjective.
By combing the T-map $\Theta_{\Pi \Omega}$ in \eqref{sanatana2}
with the epimorphism $ \mathcal{P}_5 : \D \ra \mathsf{Sp}^g_5$ above, we have a composite map
\begin{equation}\label{seqdh55}
\Pi_2 (\D ) \xrightarrow{ \ (\mathcal{P}_5 )_* \ } \Pi_2(\mathsf{Sp}^g_5 ) \xrightarrow{ \ \Theta_{\Pi \Omega} \ } \Omega_3(Sp(2g;\F_5)). \end{equation}
To show the claim, it is enough to prove the surjectivity of this composite.
Let us recall the isomorphisms $\Omega_3(Sp(2g;\F_5)) \cong H^{\rm gr}_3(Sp(2g;\F_5)) \cong \Z/24$ from Remark \ref{dowl2}, and consider the $\DD$-coloring $\mathcal{C}$ illustrated below.
Notice that the quandle $\mathsf{Sp}^g_5$ is of type $5$, and
that the 5-fold cover of $S^3$ branched along the trefoil is the Poincar\'{e} sphere $\Sigma(2,3,5)$ (see \cite[\S 10.D]{Rolfsen}),
whose $\pi_1$ is $ Sp(2;\F_5)$ exactly.
Using the map $ \theta_{X,D} $ in \eqref{sanatana}, we can see that the associated homomorphism $ \theta_{X,D}(\mathcal{C}): \pi_1(\Sigma(2,3,5) ) \ra Sp(2;\F_5)$ is an isomorphism.
Hence, the class $[\theta_{X,D}(\mathcal{C})]$ is a generator of $ \Omega_3( Sp(2;\F_5)) $.
It follows from Theorem \ref{factsl21} that the inclusion $Sp(2;\F_5) \hookrightarrow Sp(2g;\F_5) $ induces an isomorphism between these homologies without $5 $-torsion,
which means the claimed surjectivity of $(\mathcal{P}_5)_* $.

\vskip 0.4pc
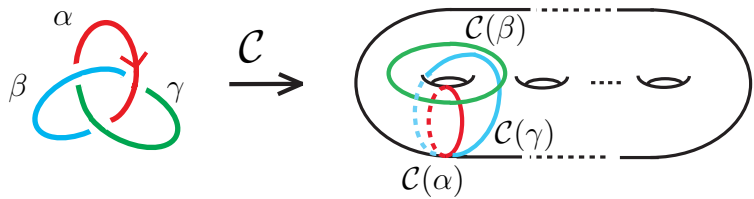
\begin{figure}[h]
$$
\begin{picture}(90,66)
\put(-79,25){\pc{coloringtrefoil2}{0.40056}}
\put(-66,56){\large $\alpha $}
\put(-83,29){\large $\beta $}
\put(-23,29){\large $\gamma $}

\put(4.2,42){\LARGE $\mathcal{C}$}

\put(66,-6){\large $\mathcal{C}(\alpha )$}
\put(101,11){\large $\mathcal{C}(\gamma )$}
\put(90,51){\large $\mathcal{C}(\beta ) $}

\end{picture}$$

\caption{\label{koutssnpn} A $\D$-coloring of the trefoil knot}
\end{figure}

However, the left vertical map $(\mathcal{P}_5)_*: \Z/24 \ra H_3^{\rm gr}(\As (\mathsf{Sp}^g_5) )$ is not surjective (see Lemma \ref{agh} below).
Hence, by carefully observing the above commutative diagram, $\Pi_2(\D)$ turns out to be either $\Z/48 $ or $\Z /24$.
\end{proof}
We now show two lemmas which are used in the proof above.

\begin{lem}\label{Les64t2}
Let $\mathcal{T}_g $ be the universal central extension on the group $\M_g$.
If $g \geq 3$, then $H_2^{\rm gr} ( \mathcal{T}_g ) $ vanishes. Furthermore, if $g \geq 7$, then $H_3^{\rm gr}( \mathcal{T}_g ) \cong \Z/24$.
\end{lem}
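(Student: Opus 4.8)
The plan is to deduce everything from known computations of the low-degree (co)homology of the mapping class group $\M_g$ together with the central-extension structure of $\mathcal{T}_g$. Recall that $\mathcal{T}_g$ is the universal central extension of $\M_g$, so there is a central extension
\begin{equation}\label{uce}
0 \lra H_2^{\rm gr}(\M_g) \lra \mathcal{T}_g \lra \M_g \lra 0,
\end{equation}
and for $g \geq 3$ we have $H_2^{\rm gr}(\M_g) \cong \Z$ and $H_1^{\rm gr}(\M_g) \cong 0$ (see \cite{FM}). First I would run the five-term exact sequence of \eqref{uce} in low degrees: since $H_1^{\rm gr}(\M_g)=0$, it gives $H_2^{\rm gr}(\mathcal{T}_g) \ra H_2^{\rm gr}(\M_g) \ra H_2^{\rm gr}(\M_g)/(\text{image of }H_2)$, and the defining property of the universal central extension (equivalently, $\mathcal{T}_g$ is superperfect in the relevant range, or a direct argument via the inflation–restriction sequence) forces $H_2^{\rm gr}(\mathcal{T}_g)=0$ as soon as $\M_g$ is perfect, i.e. $g \geq 3$. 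This settles the first claim.

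For the second claim I would use the Lyndon–Hochschild–Serre spectral sequence of the central extension \eqref{uce} with $\Z$-coefficients, $E^2_{p,q} = H_p^{\rm gr}(\M_g; H_q^{\rm gr}(\Z))$, converging to $H_{p+q}^{\rm gr}(\mathcal{T}_g)$. The relevant entries on the line $p+q=3$ are $E^2_{3,0} = H_3^{\rm gr}(\M_g)$, $E^2_{1,2} = H_1^{\rm gr}(\M_g; H_2^{\rm gr}(\Z)) = 0$ (again $H_1^{\rm gr}(\M_g)=0$, and $H_2^{\rm gr}(\Z)=0$ anyway), and $E^2_{2,1} = H_2^{\rm gr}(\M_g; \Z) = H_2^{\rm gr}(\M_g) \cong \Z$. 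The key input here is that $H_3^{\rm gr}(\M_g)$ is known in the stable range: for $g \geq 7$ (or thereabouts) one has $H_3^{\rm gr}(\M_g) \cong \Z/24$ — this is the classical stable computation of Mumford/Powell/ and is recorded e.g. in \cite{FM}. I would then need to understand the differentials $d_2 \colon E^2_{2,1} \to E^2_{0,2}$ and $d_3 \colon E^3_{3,0} \to E^3_{0,2}$: since $E^2_{0,2}=H_2^{\rm gr}(\Z)=0$ these vanish trivially, and the differential $d_2 \colon E^2_{3,0}=H_3^{\rm gr}(\M_g) \to E^2_{1,1} = H_1^{\rm gr}(\M_g;\Z)=0$ also vanishes. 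The only possibly nonzero differential affecting the $p+q=3$ line is $d_2 \colon E^2_{2,1} \to E^2_{0,2}=0$, so all relevant differentials vanish; there remains the extension problem in assembling $H_3^{\rm gr}(\mathcal{T}_g)$ from $E^\infty_{3,0}=\Z/24$ and $E^\infty_{2,1}=\Z$ — but the quotient of the extension is free ($E^\infty_{2,1}\subset \Z$), while its value is pinned down by the fact that $H_1^{\rm gr}(\mathcal{T}_g)=H_2^{\rm gr}(\mathcal{T}_g)=0$ forces the stable homology of $\mathcal{T}_g$ to agree with that of $\M_g$ in this degree up to the obvious $\Z$-summand, and the argument that the $\Z$ does not contribute a torsion extension uses that the class of \eqref{uce} in $H^2_{\rm gr}(\M_g;\Z)$ is the canonical generator (by universality).

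The main obstacle I expect is precisely this last point: controlling the extension problem / the behaviour of the $\Z$-summand coming from $E^\infty_{2,1}$, i.e. verifying that $H_3^{\rm gr}(\mathcal{T}_g)$ has no extra $\Z$-free part and that the torsion is exactly $\Z/24$ rather than a nontrivial extension of $\Z/24$ by something. The cleanest way around this is probably to argue with coefficients: tensor with $\Q$ to see $H_3^{\rm gr}(\mathcal{T}_g;\Q)=0$ (using that $H_3^{\rm gr}(\M_g;\Q)=0$ stably and rational collapse of the spectral sequence), which kills any free part, and then work prime-by-prime, using that for $\ell > 3$ the group $H_3^{\rm gr}(\M_g)$ has no $\ell$-torsion and the central $\Z$ contributes nothing in odd total degree $\leq 3$ after localization. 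Alternatively, one can cite directly the literature on $H_*^{\rm gr}(\mathcal{T}_g)$ in the stable range (the same sources used for Theorem \ref{hom3ocdr}), which record $H_3^{\rm gr}(\mathcal{T}_g)\cong \Z/24$ for $g\geq 7$. Either route finishes the lemma.
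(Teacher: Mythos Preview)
Your argument for $H_2^{\rm gr}(\mathcal{T}_g)=0$ is fine and matches the paper's. The problem is in the $H_3$ computation: your key input is wrong. In the stable range one has $H_3^{\rm gr}(\M_g)\cong \Z/12$, not $\Z/24$ (this is recorded in \cite{TM}, see also \cite{Eb}, and the paper itself uses $H_3^{\rm gr}(\M_g)\cong\Z/12$ in Lemma \ref{agh}). So your spectral sequence, even if run correctly, would only produce $\Z/12$ on the $E^2_{3,0}$ line, and the missing factor of $2$ has to come from the $E^2_{2,1}=\Z$ piece via a nontrivial extension. But you have also overlooked the differential $d_2\colon E^2_{4,0}=H_4^{\rm gr}(\M_g)\to E^2_{2,1}=\Z$ (cap product with the extension class); this must cut $E^2_{2,1}$ down to a finite group before the extension can yield $\Z/24$. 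Carrying this out would require knowing $H_4^{\rm gr}(\M_g)$ and the cap-product map, and then resolving a genuinely nontrivial extension---none of which you address.

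The paper sidesteps all of this by identifying $H_3^{\rm gr}(\mathcal{T}_g)$ with $\pi_3(B\M_g^+)$ via the Quillen plus construction (a standard fact for universal central extensions of perfect groups, cf.\ \cite[Theorem 5.2.7]{Ros}), then invoking Harer--Ivanov stability to pass to $\pi_3(B\M_{\infty,1}^+)$, which Madsen--Tillmann \cite{TM} computed to be $\Z/24$. Your ``alternative'' of citing the literature directly is essentially this route; it is the clean way to do it, and your LHS-spectral-sequence approach, as written, does not go through.
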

\noindent
We now prove Lemma \ref{Les64t2} by using Quillen plus constructions and Madsen-Tillmann \cite{TM}.

\begin{proof}
We first immediately have $H_2^{\gr }(\mathcal{T}_{g} ) \cong 0$,
since $ \mathcal{T}_{g} $ is the universal central extension of $\M_g$ and the group $\M_g$ is perfect (see, e.g., \cite[Corollary 4.1.18]{Ros}).

We next focus on $H_3^{\rm gr}( \mathcal{T}_{g} ) $ with $g \geq 3$.
Let $ B\M_{g,k}^+ $ denote Quillen plus construction of an Eilenberg-MacLane space of $\M_{g,k} $ (see, e.g., \cite[Chapter 5.2]{Ros} for the definition).
Since $ \M_{g,k}$ is perfect, the space $ B\M_{g,k}^+ $ is simply connected.
As a basic property of plus constructions (see \cite[Theorem 5.2.7]{Ros}), the homotopy group $\pi_3 ( B\M_{g}^+ ) $ is isomorphic to $H_3^{\rm gr}(\mathcal{T}_{g} ) $.

It is therefore sufficient to calculate $\pi_3 ( B\M_{g}^+) $ for $g\geq 7$.
For this, we set up some preliminaries.
Consider the inclusion $\M_{g,1} \ra \M_{g+1,1}$ obtained by gluing the surface $\Sigma_{1,2}$ along one of its boundary components.
Let $\M_{\infty,1 } := \lim_{g \ra \infty}\M_{g,1} $.
Furthermore put an epimorphism $\delta_g: \M_{g,1} \ra \M_{g}$ induced by gluing a disc to the boundary component of $\Sigma_{g,1}$.
According to the Harer-Ivanov stability theorem improved by \cite{RW}, the inclusion $ \iota_{\infty}: \M_{g,1} \ra \M_{\infty,1 }$ induces
an isomorphism $H_j^{\rm gr}( \M_{g,1} ) \cong H_j^{\rm gr}( \M_{\infty,1 } )$, and the map $\delta_g$ does $H_j^{\rm gr}( \M_g ) \cong H_j^{\rm gr}( \M_{g,1} )$, for $j \leq 3$.

Finally, we consider the maps $\delta_g^+: B\M_{g,1}^+ \ra B\M_{g}^+$ and $ \iota^+_{\infty}: B\M_{g,1}^+ \ra B\M_{\infty,1 }^+$
induced by $\delta_g$ and $\iota_{\infty}$, respectively.
By Whitehead theorem, these maps induce isomorphisms
$$ (\delta_g^+)_*: \pi_3 ( B\M_{g,1}^+) \cong \pi_3 ( B\M_{g}^+), \ \ \ \ \ \ \ \ (\iota^+_{\infty})_* : \pi_3 ( B\M_{g,1}^+) \cong \pi_3 ( B\M_{\infty,1 }^+).$$
However the $\pi_3 ( B\M_{\infty}^+) \cong \Z /24 $ was shown by Madsen and Tillmann \cite{TM} (see also \cite{Eb}).
In summary, we have $ H_3^{\rm gr}( \mathcal{T}_{g} ) \cong \pi_3 ( B\M_{\infty ,1 }^+) \cong \Z /24 $ as required.\end{proof}

\begin{lem}\label{agh}
The induced map $(\mathcal{P}_5)_* : H_3^{\rm gr}( \As(\D)) \ra H_3^{\rm gr}( \As(\mathsf{Sp}_5^g) )$ with $g \geq 7$ is not surjective.
\end{lem}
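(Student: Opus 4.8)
The plan is to identify $(\mathcal{P}_5)_*$ in degree $3$ with a self-map of $\Z/24$ and to show it is not surjective. First I would unwind the associated groups. By Gervais' theorem $\As(\D)\cong\Z\times\mathcal{T}_g$ and $\As(\mathsf{Sp}^g_5)\cong\Z\times Sp(2g;\F_5)$, the $\Z$-factor in each being $\varepsilon$ (both $\mathcal{T}_g$ and $Sp(2g;\F_5)$ are perfect, so these are the abelianizations). As $\mathcal{P}_5$ commutes with $\varepsilon$, it induces $\mathrm{id}_\Z\times f$ on associated groups, where $f\colon\mathcal{T}_g\to Sp(2g;\F_5)$ is the composite $\mathcal{T}_g\twoheadrightarrow\M_g\xrightarrow{\ \rho\ }Sp(2g;\Z)\to Sp(2g;\F_5)$ with $\rho$ the symplectic representation and the last arrow reduction modulo $5$: a Dehn twist acts on $H_1(\Sigma_g;\Z)$ by the transvection along its class, and the mod-$5$ reduction of that transvection is exactly the operation defining $\mathsf{Sp}^g_5$. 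Since $H_*^{\rm gr}(\Z)$ is concentrated in degrees $0,1$ and $H_2^{\rm gr}(\mathcal{T}_g)=0=H_2^{\rm gr}(Sp(2g;\F_5))$ (the latter by Proposition \ref{factsp2}), the Künneth formula identifies $(\mathcal{P}_5)_*$ on $H_3$ with $f_*\colon H_3^{\rm gr}(\mathcal{T}_g)\to H_3^{\rm gr}(Sp(2g;\F_5))$. By Lemma \ref{Les64t2} the source is $\Z/24$ and by Remark \ref{dowl2} (with $q=5$, $g\geq 7$) the target is $\Z/24$, so it remains to prove $f_*$ is not surjective.

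Second, $f$ factors through $Sp(2g;\Z)$, so $f_*$ factors as
$$H_3^{\rm gr}(\mathcal{T}_g)\ \longrightarrow\ H_3^{\rm gr}(\M_g)\xrightarrow{\ \rho_*\ }H_3^{\rm gr}(Sp(2g;\Z))\ \longrightarrow\ H_3^{\rm gr}(Sp(2g;\F_5)).$$
In the stable range $g\geq 7$ (Charney stability) $H_3^{\rm gr}(Sp(2g;\Z);\Z)$ is independent of $g$, and by Borel it is a finite group. I would then invoke the known value of this stable torsion — equivalently, of the symplectic $K$-group $KSp_3(\Z)$, available from Hermitian $K$-theory (Karoubi; Berrick--Karoubi) — to see that the composite through it cannot be onto. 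Concretely: one shows either that $H_3^{\rm gr}(Sp(2g;\Z))$ maps to $H_3^{\rm gr}(Sp(2g;\F_5))\cong\Z/24$ with image of index $\geq 2$ (reduction mod $5$ does not manufacture a generator of $K_3(\F_5)$ out of it), or that the earlier map $\rho_*\colon H_3^{\rm gr}(\M_g)\to H_3^{\rm gr}(Sp(2g;\Z))$ already has image too small; in either case $f_*$ misses a generator. Feeding this back into the commutative diagram displayed in the proof of Theorem \ref{hom3ocdr} then forces $\Pi_2(\D)$ to be cyclic, as claimed there.

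The main obstacle is precisely this last input. No purely formal argument can suffice, because source and target of $f_*$ are abstractly isomorphic to $\Z/24$; one genuinely has to locate the image of $H_3^{\rm gr}(\mathcal{T}_g)$ in $H_3^{\rm gr}(Sp(2g;\F_5))$. The hard part is therefore controlling the stable homology $H_3^{\rm gr}(Sp(2g;\Z))$ (equivalently $KSp_3(\Z)$) at the primes $2$ and $3$ together with its reduction modulo $5$, or — as an alternative route — tracking the generator of $\pi_3(B\M_\infty^+)\cong\Z/24$ under Madsen--Tillmann's identification with a stable stem through the symplectic representation and reduction mod $5$, one prime at a time. Either way this calls for genuine $K$-theoretic or homotopy-theoretic information, beyond the $P$-sequence bookkeeping used elsewhere in this section.
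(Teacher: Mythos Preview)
Your setup is correct and matches the paper: $(\mathcal{P}_5)_*$ on $H_3$ reduces to $f_*\colon H_3^{\rm gr}(\mathcal{T}_g)\to H_3^{\rm gr}(Sp(2g;\F_5))$, both groups are $\Z/24$, and $f$ factors through $\M_g$. But you then take a detour through $Sp(2g;\Z)$ and Hermitian $K$-theory that you yourself flag as unfinished. The paper avoids this entirely.

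The missing observation is that the factorization through $\M_g$ is already decisive, because $H_3^{\rm gr}(\M_g)\cong\Z/12$ in the stable range. This is part of the same Madsen--Tillmann package you are already citing for $\pi_3(B\M_\infty^+)\cong\Z/24$: they compute the low-dimensional homology of $\M_\infty$, and in particular $H_3$ is $\Z/12$ (see also Ebert). Once you know that, the composite
\[
H_3^{\rm gr}(\mathcal{T}_g)\;\longrightarrow\;H_3^{\rm gr}(\M_g)\cong\Z/12\;\longrightarrow\;H_3^{\rm gr}(Sp(2g;\F_5))\cong\Z/24
\]
visibly cannot surject, and you are done. No information about $H_3^{\rm gr}(Sp(2g;\Z))$ or $KSp_3(\Z)$ is needed; that route, while not wrong in principle, is substantially harder and you have not actually carried it out.
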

\begin{proof}
As mentioned previous, recall the reduction of $\mathcal{P}_5: \As(\D) \ra \As(\mathsf{Sp}_5^g) $ to $\Z \times \mathcal{T}_{g} \ra \Z \times Sp(2g ;\mathbb{F}_5) $.
We easily see that it factors through $\Z \times \M_g$. However $H_3^{\gr }(\M_g) \cong \Z/12$ is known \cite{TM} (see also \cite[\S 1]{Eb}).
Since $H_3^{\rm gr}( \mathcal{T}_{g} ) \cong H_3^{\rm gr}( Sp(2g ;\mathbb{F}_5 )) \cong \Z/24$ as above,
the map $(\mathcal{P}_5)_*$ is not a surjection.
\end{proof}

\section{Application to third quandle homologies}\label{s87as}
As an application from the study of the homotopy group $\pi_2(BX)$,
we compute some torsion subgroups of third quandle homologies $H_3^Q(X)$ of finite connected quandles $X$.
First, we prove Theorem \ref{thm3homology} owing to the facts explained in \S \ref{sSproof}.
Next, in \S \ref{s87Al}--\ref{s87asl}, we later determine $H_3^Q(X)$ of some quandles.

We briefly explain a basic line to study $ H_3^Q(X)$ in this section.
Let $B(X,X)$ be the rack space associated to the primitive $X$-set.
Note the following isomorphisms:
\begin{equation}
\label{eq.H3RQ} \hspace*{5pc}
H_2 (B(X,X))\cong H^R_3 (X) \cong H_3^Q (X) \oplus H_2^Q (X) \oplus \Z,
\end{equation}
where the first isomorphism is derived from Remark \ref{hayano}, and the second was shown \cite[Theorem 2.2]{LN}.
Composing this \eqref{eq.H3RQ} with the result on $\pi_2(BX) = \pi_2(B(X,X) )$ from Theorem \ref{theorem} can compute
some torsion of the quandle homology $ H_3^Q (X)$ [see Lemma \eqref{clamonoid}].

Following this line, we now prove Theorem \ref{thm3homology} as a general statement, which is rewritten as
\begin{thm}[{Theorem \ref{thm3homology}}]\label{thm3homology2}
Let $X$ be a connected quandle with $|X| < \infty$.
Let $\Ker (\psi_X) $ be the abelian kernel in \eqref{AI}.
Then an isomorphism $H_3^Q(X) \cong_{(\ell)} H_3^{\rm gr}(\As(X)) \oplus (\Ker (\psi_X) \wedge \Ker (\psi_X) ) $ holds
after localization at any prime $\ell$ which does not divide $2|\mathrm{Inn}(X)|/|X|$.
\end{thm}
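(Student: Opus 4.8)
The strategy is to run the argument through the rack space $B(X,X)$ associated to the primitive $X$-set and combine it with the computation of $\pi_2(BX)=\pi_2(B(X,X))$ that Theorem \ref{theorem} (together with Corollary \ref{cccor2}) already provides. First I would recall the decomposition \eqref{eq.H3RQ}, so that determining torsion in $H_3^Q(X)$ reduces to understanding $H_2(B(X,X))$ up to $H_2^Q(X)\oplus\Z$. Since $X$ is finite and connected, $H_3^{\gr}(\As(X))$ is finitely generated, so Theorem \ref{theorem} applies and yields $\pi_2(BX)\cong_{(\ell)} H_3^{\gr}(\As(X))\oplus H_2^Q(X)$ for every prime $\ell$ not dividing $t_X$; note $t_X$ divides $|\mathrm{Inn}(X)|/|X|$ by Lemma \ref{lem11}, so this already covers all the primes in the statement except possibly those dividing the remaining factor. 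The universal covering of $B(X,X)$ is a topological monoid (by \cite[\S 2.5]{Cla}, as used in the proof of Lemma \ref{mpru}), hence a based loop space $\Omega\mathcal{L}$ with $\mathcal{L}$ simply connected; therefore by the Hurewicz and the loop-space relation between $\pi_*$ and $H_*$ of $\Omega\mathcal{L}$, the Pontryagin product controls $H_2$ of that universal cover. Concretely, $H_2(\widetilde{B(X,X)})$ is built from $\pi_2$ and $\pi_2\wedge\pi_2$ (the degree-two part of the divided-power / free graded-commutative structure on $H_*(\Omega\mathcal{L})$), and after inverting the relevant primes this gives $H_2(B(X,X))\cong_{(\ell)} H_3^{\gr}(\As(X)) \oplus H_2(\widetilde{B(X,X)})_{(\ell)}$ with the wedge-square term appearing.

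The key identification I would then make is that $\pi_2$ of the universal cover, localized away from $2|\mathrm{Inn}(X)|/|X|$, is isomorphic to $\Ker(\psi_X)$ rather than to $\pi_2(BX)$ itself: this is exactly where the inner automorphism group enters. Here I would use the group extension \eqref{AI}, $0\to\Ker(\psi_X)\to\As(X)\to\mathrm{Inn}(X)\to 0$, together with the fact (Corollary \ref{cccor1}) that $\Ker(\psi_X)\cong \Z\oplus H_2^{\gr}(\mathrm{Inn}(X))$ modulo $t_X$-torsion, and the Clauwens-type estimate recalled in Remark \ref{clarem} that $(\psi_X)_*\circ c_*$ is annihilated by $|\mathrm{Inn}(X)|/|X|$. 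Running the Leray–Serre / Postnikov comparison for $B(X,X)$ against $K(\As(X),1)$ and against $K(\mathrm{Inn}(X),1)$, and localizing at $\ell\nmid 2|\mathrm{Inn}(X)|/|X|$, one sees that the transgression and Hurewicz maps assemble into a short exact sequence whose splitting is provided, as in the proof of Theorem \ref{theorem}, by the $\Theta$-type map; this yields $\pi_2(\widetilde{B(X,X)})_{(\ell)}\cong \Ker(\psi_X)_{(\ell)}$. Feeding this back into the $H_2$-of-a-loop-space computation produces $H_2(\widetilde{B(X,X)})_{(\ell)}\cong \Ker(\psi_X)_{(\ell)} \oplus (\Ker(\psi_X)\wedge\Ker(\psi_X))_{(\ell)}$, and after subtracting the $H_2^Q(X)\oplus\Z$ summand from \eqref{eq.H3RQ} and matching $\Ker(\psi_X)$ against $\Z\oplus H_2^Q(X)$ up to the allowed torsion, the two $\Z$'s and the $H_2^Q(X)$ cancel, leaving $H_3^Q(X)\cong_{(\ell)} H_3^{\gr}(\As(X))\oplus(\Ker(\psi_X)\wedge\Ker(\psi_X))$.

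The main obstacle is the bookkeeping of exactly which primes must be inverted: one must check that both the loop-space multiplicative structure (the divided-power correction terms, which a priori involve $2$-torsion) and the replacement of $\pi_2(BX)$ by $\Ker(\psi_X)$ (which costs the factor $|\mathrm{Inn}(X)|/|X|$ coming from the transfer for $\As(X)\to\mathrm{Inn}(X)$) are simultaneously controlled by $2|\mathrm{Inn}(X)|/|X|$ and by nothing worse. The factor $2$ is genuinely needed, as Remark \ref{thm3homologyre} points out via the order-$4$ Alexander quandle counterexample, so the argument must not accidentally prove a $2$-local statement. A secondary technical point is verifying that the wedge-square $\Ker(\psi_X)\wedge\Ker(\psi_X)$ — and not, say, a symmetric-square or a tensor-square — is the correct degree-$2$ contribution of $H_*(\Omega\mathcal{L})$; this follows from the graded-commutativity of the Pontryagin ring of a loop space in the relevant range, but should be stated carefully. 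Once these two localization counts are pinned down, the rest is the formal assembly described above.
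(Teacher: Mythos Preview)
Your outline has a genuine gap in the middle, and it stems from working with the wrong covering of $BX$. You pass to the \emph{universal} cover $\widetilde{B(X,X)}=B(X,\As(X))$ and try to extract $\Ker(\psi_X)\wedge\Ker(\psi_X)$ from the Pontryagin ring of the loop space $\Omega\mathcal{L}$. But the universal cover is simply connected, so Hurewicz gives $H_2(\widetilde{B(X,X)})\cong\pi_2(\widetilde{B(X,X)})=\pi_2(BX)$ on the nose; there is no extra $\pi_2\wedge\pi_2$ contribution in degree~$2$. Your subsequent claim that $\pi_2(\widetilde{B(X,X)})_{(\ell)}\cong\Ker(\psi_X)_{(\ell)}$ is also false: covering maps preserve $\pi_2$, and $\pi_2(BX)_{(\ell)}\cong\Z\oplus H_3^{\gr}(\As(X))\oplus H_2^Q(X)$ is simply not $\Ker(\psi_X)_{(\ell)}\cong\Z\oplus H_2^{\gr}(\mathrm{Inn}(X))$ in general (e.g.\ the symplectic quandle over $\F_q$ with $n\ge 2$ has $\pi_2(BX)_{(\ell)}\cong\Z\oplus\Z/(q^2-1)$ but $\Ker(\psi_X)\cong\Z$).

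The paper instead works with the \emph{intermediate} cover $B(X,\mathrm{Inn}(X))$, whose fundamental group is exactly the abelian group $\Ker(\psi_X)$ (Remark~\ref{setrans}). Clauwens' monoid structure lives on this space, and the point is that for a path-connected topological monoid the second $k$-invariant is $2$-torsion, so modulo~$2$ the Hurewicz map splits and one gets $H_2(B(X,\mathrm{Inn}(X)))\cong\pi_2(BX)\oplus H_2^{\gr}(\Ker(\psi_X))$. The wedge square then appears for the mundane reason that $H_2^{\gr}(A)\cong A\wedge A$ for any abelian group $A$; it has nothing to do with a Pontryagin product on $\pi_2$. Finally the finite covering $B(X,\mathrm{Inn}(X))\to B(X,X)$ has degree $|\mathrm{Inn}(X)|/|X|$, and the transfer gives $H_2(B(X,\mathrm{Inn}(X)))_{(\ell)}\cong H_2(B(X,X))_{(\ell)}$ for $\ell\nmid|\mathrm{Inn}(X)|/|X|$; this is where that factor enters. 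Plugging in Theorem~\ref{theorem} for $\pi_2(BX)_{(\ell)}$ and cancelling against \eqref{eq.H3RQ} finishes the argument. If you rewrite your proof around $B(X,\mathrm{Inn}(X))$ rather than the universal cover, the pieces you already have (Theorem~\ref{theorem}, \eqref{eq.H3RQ}, the role of the factor $2$) fall into place.
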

\begin{proof}
By Lemma \ref{clamonoid} and the isomorphism \eqref{relezafk} below, we have an isomorphism
\begin{equation}\label{re0ej} \pi_2(BX)_{(\ell)} \oplus H_2^{\rm gr}(\Ker (\psi_X) )_{(\ell)} \cong H_2(B(X,X))_{(\ell)}. \end{equation}
Recall from Theorem \ref{theorem} the isomorphism $\pi_2(BX)_{(\ell)} \cong H_3^{\rm gr}(\As(X))_{(\ell)} \oplus H_2^Q(X)_{(\ell)} \oplus \Z_{(\ell)}.$
Hence, together with \eqref{eq.H3RQ} above, the isomorphism \eqref{re0ej} is rewritten in
$$ H_3^{\rm gr}(\As(X))_{(\ell)} \oplus H_2^Q(X)_{(\ell)} \oplus \Z_{(\ell)} \oplus H_2^{\rm gr}(\Ker (\psi_X) )_{(\ell)} \cong \Z_{(\ell)} \oplus H_3^Q(X)_{(\ell)} \oplus H_2^Q(X)_{(\ell)}. $$
Since
the second group homology $ H^{\rm gr}_2( \Ker (\psi_X))$ is the exterior product $ \bigwedge^2 (\Ker (\psi_X)) $ [see \cite[\S V.6]{Bro}],
by a reduction of the both hand sides, we reach at the conclusion. 
\end{proof}

\subsection{Preliminaries}\label{sSproof}

\baselineskip=16pt
We now recall basic properties of the rack space $B(X,Y)$ introduced in \S \ref{SS3j3out},
which are used in the preceding proof. To begin, we review the following:

\begin{prop}[{\cite[Theorem 3.7 and Proposition 5.1]{FRS1}}]\label{propFRS}
Let $X$ be a quandle, and $Y$ an $X$-set. Decompose $Y$
into the orbits as $Y = \sqcup_{i \in I}Y_i$. For $i \in I$ and an element $y_i \in Y_i$, denote by $\mathrm{Stab}(y_i ) \subset \As(X)$ the stabilizer of $y_i$.
Then, the subspace $B(X,Y_i) \subset B(X,Y)$ is path-connected, and the
natural projection $B(X,Y_i) \ra BX$ is a covering.
Furthermore, the $\pi_1(B(X,Y_i))$ is
the stabilizer $\mathrm{Stab}(y_i ) $, by observing the covering transformation group.\end{prop}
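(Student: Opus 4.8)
The plan is to read everything off the explicit CW structure of the rack space, using van Kampen's theorem and elementary covering-space theory. First I recall that $B(X,Y)$ is a CW complex whose $n$-cells are indexed by $Y\times X^{n}$, the $0$-cells being the points of $Y$; tracing the identifications \eqref{equ.relation} one sees that the $1$-cell labelled $(y;x)$ runs from the vertex $y$ to the vertex $y\cdot e_{x}$, and that the $2$-cells encode the relations $e_{x}e_{y}=e_{y}e_{x\tri y}$. In the special case $Y=\{\ast\}$ all $1$-cells are loops and the $2$-cells impose exactly the defining relations of $\As(X)$, so van Kampen gives the (well-known) isomorphism $\pi_{1}(BX)\cong\As(X)$ sending the class of the $1$-cell $(x)$ to $e_{x}$; I take this as the starting point.

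Second, I would observe that the relations \eqref{equ.relation} only ever replace the $Y$-coordinate $y$ by $y\cdot e_{x_{j}}$, hence never leave its $\As(X)$-orbit; therefore $B(X,Y)=\bigsqcup_{i\in I}B(X,Y_{i})$, and each $B(X,Y_{i})$ is open and closed in $B(X,Y)$. Path-connectedness of $B(X,Y_{i})$ then follows from the $1$-skeleton alone: its vertices are the elements of $Y_{i}$, and since $\As(X)$ is generated by the $e_{x}$ and acts transitively on $Y_{i}$, for any $y,y'\in Y_{i}$ one can write $y'=y\cdot e_{x_{1}}^{\pm1}\cdots e_{x_{k}}^{\pm1}$ and follow the corresponding chain of $1$-cells, traversed forwards or backwards, to obtain a path $y\ra y'$.

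Third --- and this is the technical core --- I would show that the forgetful map $p\colon B(X,Y)\to BX$, which on the cell $(y;x_{1},\dots,x_{n})$ drops the $Y$-coordinate, is a covering. It is cellular, restricts to a homeomorphism on each open cell, and over a given open $n$-cell of $BX$ its preimage is the disjoint union of the $|Y|$ open cells $(y;x_{1},\dots,x_{n})$, $y\in Y$. The point to check is that $p$ is a local homeomorphism: around any point of $B(X,Y)$ the local cell pattern is carried bijectively onto that of its image, because the identifications \eqref{equ.relation} pin down the $Y$-coordinate of every cell incident to a given one once that one is fixed. I find it cleanest to phrase this as unique path lifting: a path in $BX$ issuing from the base point $\ast$ with homotopy class $g\in\As(X)\cong\pi_{1}(BX)$ lifts, from any chosen lift $\tilde y_{0}\in p^{-1}(\ast)=Y$ of the initial point, to a path ending at $\tilde y_{0}\cdot g$; this is forced, and well defined, precisely because the relations among the $e_{x}$ are realised by the $2$-cells of $BX$. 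It follows that $p$ is a covering whose monodromy action of $\pi_{1}(BX)=\As(X)$ on the fibre $p^{-1}(\ast)=Y$ is the given $X$-set action.

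Finally, restricting $p$ to a component $B(X,Y_{i})$ gives a covering $B(X,Y_{i})\to BX$ with fibre $Y_{i}$, on which the monodromy is transitive; so it is a connected covering, and under the usual correspondence between connected coverings of $BX$ and conjugacy classes of subgroups of $\pi_{1}(BX)=\As(X)$ it corresponds to the stabilizer $\mathrm{Stab}(y_{i})$ of a chosen $y_{i}\in Y_{i}$ --- equivalently, $B(X,Y_{i})$ is the quotient of the universal cover $B(X,\As(X))$ by the deck action of $\mathrm{Stab}(y_{i})$, which is what the phrase ``by observing the covering transformation group'' refers to. Hence $p_{\ast}$ identifies $\pi_{1}(B(X,Y_{i}))$ with $\mathrm{Stab}(y_{i})\subset\As(X)$, as claimed. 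I expect the only genuine difficulty to be the rigorous verification in the third step that $p$ is a local homeomorphism (equivalently, has unique path lifting): this demands careful bookkeeping of the identifications \eqref{equ.relation} together with the fact that the $2$-skeleton of $BX$ realises exactly the given presentation of $\As(X)$; every other step is essentially formal.
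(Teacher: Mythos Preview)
The paper does not prove this proposition at all: it is quoted from \cite[Theorem 3.7 and Proposition 5.1]{FRS1} and merely recalled as background in \S\ref{sSproof}. So there is no ``paper's own proof'' to compare against.

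That said, your outline is essentially the standard argument and is correct in spirit. Two remarks. First, in your third step you say you will check $p$ is a local homeomorphism and then write ``I find it cleanest to phrase this as unique path lifting''; be careful here, since unique path lifting by itself does not imply that a map is a covering (there are well-known counterexamples). What you actually need is an evenly covered neighbourhood of each point of $BX$, and the CW structure hands you this directly: for an open $n$-cell $\sigma$ of $BX$ indexed by $(x_1,\dots,x_n)$, the open star of $\sigma$ pulls back to the disjoint union over $y\in Y$ of the open stars of the cells $(y;x_1,\dots,x_n)$, and the identifications \eqref{equ.relation} show each of these maps homeomorphically down. Second, once $p$ is a covering with monodromy equal to the given $\As(X)$-action on $Y$, your last paragraph is exactly right and is the cleanest way to finish: the connected covering $B(X,Y_i)\to BX$ corresponds to the subgroup $\mathrm{Stab}(y_i)\subset\As(X)$, hence $\pi_1(B(X,Y_i))\cong\mathrm{Stab}(y_i)$.
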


We next observe the spaces $B(X,Y)$ in some cases of $Y$, where $X$ is assumed to be connected.
First, since $ \pi_1(BX) \cong \As (X)$ from the 2-skeleton of $BX$,
the projection $ B(X,Y)\ra BX$ with $Y= \As(X)$ is the universal covering.
Next, we let $Y$ be the inner automorphism group Inn$(X)$, and be acted on by $\As(X)$ via \eqref{AI}. 
Considering the surjections Inn$(X) \ra X \ra \{ \mathrm{pt} \}$ as $X$-sets,
they then yield a sequence of the coverings
\begin{equation}\label{relfk} B(X, \mathrm{Inn}(X)) \lra B(X,X) \lra BX. \end{equation}
\begin{rem}\label{setrans}
According to Proposition \ref{propFRS},
$\pi_1( B(X,X)) $ is the stabilizer $\mathrm{Stab}(x_0) \subset \As(X) $, and
$\pi_1 (B(X, \mathrm{Inn}(X))) $ is the abelian kernel $\Ker (\psi_X )$ of $\psi_X:\As(X) \ra \mathrm{Inn}(X)$ in \eqref{AI}.
\end{rem}


We further observe homologies of the space $B(X,X)$.
Let $\ell$ be a prime which does not divide the order $|\mathrm{Inn}(X)|/|X|$.
As is known, the action of $\pi_1(BX)$ on the homology group $H_*(BX)$ is trivial (see \cite{Cla}).
Then the first covering in \eqref{relfk} induces an isomorphism between their homologies localized at $\ell$. To be precise
\begin{equation}\label{relezafk} H_*( B(X, \mathrm{Inn}(X) ))_{(\ell)} \cong H_* ( B(X,X))_{(\ell)}. \end{equation}
Actually the transfer map of the covering is an inverse mapping (see \cite[Proposition 4.2]{Cla}).

Finally, we will observe a relation between the homotopy and homology groups of the rack space $B(X,\mathrm{Inn}(X) ) $. 
Refer to the fact
that Clauwens \cite[\S 2.5]{Cla} gave a topological monoid structure on $B(X,\mathrm{Inn}(X) )$.
\begin{lem}\label{clamonoid} Let $X$ be a connected quandle. Let $B X_G $ denote the rack space $B(X,\mathrm{Inn}(X))) $ for short.
Then the Hurewicz homomorphism $\pi_2(BX) = \pi_2( BX_G )\ra H_2(B X_G )$ gives a $[1/2]$-splitting. In particular
$$ H_2(B X_G ) \cong_{[1/2]} \pi_2(BX) \oplus H_2^{\rm gr}(\Ker (\psi_X) ). $$
\end{lem}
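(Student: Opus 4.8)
The plan is to exploit the topological monoid structure that Clauwens \cite[\S 2.5]{Cla} puts on $BX_G:=B(X,\mathrm{Inn}(X))$. Since $BX_G$ is path-connected (its index set $\mathrm{Inn}(X)$ is a single $\As(X)$-orbit), this monoid is grouplike, so $BX_G$ is a connected $H$-space; in particular $H_*(BX_G;R)$ is a connected graded $R$-Hopf algebra for every commutative ring $R$. Recall from Proposition \ref{propFRS} and Remark \ref{setrans} that $\pi_1(BX_G)=\Ker(\psi_X)=:G$ is abelian, and that the projection $BX_G\to BX$ in \eqref{relfk} is a covering, so $\pi_2(BX_G)=\pi_2(BX)$ and the universal cover $\widetilde{BX_G}=\widetilde{BX}$ is simply connected with $H_2(\widetilde{BX})\cong\pi_2(BX)$ by Hurewicz. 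Consequently the Hurewicz map $\mathcal H$ of $BX_G$ is the composite $\pi_2(BX)\cong H_2(\widetilde{BX})\lra H_2(BX_G)$, and the five-term exact sequence of the Cartan–Leray (Serre) spectral sequence of $\widetilde{BX}\to BX_G\to K(G,1)$ reads
\begin{equation*}
H_3^{\rm gr}(G)\xrightarrow{\ \tau\ }\pi_2(BX)\xrightarrow{\ \mathcal H\ }H_2(BX_G)\lra H_2^{\rm gr}(G)\lra 0,
\end{equation*}
where $H_2^{\rm gr}(G)=\bigwedge^2\Ker(\psi_X)$. Thus the lemma is equivalent to the assertion that $\mathcal H$ is a split monomorphism after inverting $2$.

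To prove this I would argue one prime at a time. Rationally, the Milnor–Moore/Cartan–Serre theorem for the $H$-space $BX_G$ gives that $H_*(BX_G;\Q)$ is the free graded-commutative $\Q$-algebra on $\pi_*(BX_G)\otimes\Q$, so in degree $2$ it is $(\pi_2(BX)\otimes\Q)\oplus\bigwedge^2(G\otimes\Q)$, with the first summand the Hurewicz image and the second spanned by Pontryagin products of degree-$1$ classes. For an odd prime $p$, Borel's structure theorem for graded-commutative Hopf algebras over $\mathbb{F}_p$ (the hypothesis $p\neq2$ being used for graded-commutativity) likewise decomposes the degree-$2$ part of $H_*(BX_G;\mathbb{F}_p)$ as (its primitives) $\oplus\,\bigwedge^2 H_1(BX_G;\mathbb{F}_p)$; using the fibration $\widetilde{BX}\to BX_G\to K(G,1)$ and the mod-$p$ Hurewicz theorem for the simply connected $\widetilde{BX}$ one identifies the degree-$2$ primitives with $\mathcal H(\pi_2(BX))\otimes\mathbb{F}_p$ and $H_1(BX_G;\mathbb{F}_p)$ with $G\otimes\mathbb{F}_p$. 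On the other hand graded-commutativity forces the Pontryagin pairing $G\otimes G=H_1(BX_G)\otimes H_1(BX_G)\to H_2(BX_G)$ to be anti-symmetric with $2$-torsion diagonal, so it descends after inverting $2$ to a map $\bigwedge^2 G\to H_2(BX_G)$ which, via the identification $K(G,1)=BX_G[1]$, is a section of $H_2(BX_G)\to H_2^{\rm gr}(G)$ modulo $2$-torsion. Assembling the rational and odd-primary computations, $\mathcal H$ together with this $\bigwedge^2$-section gives an isomorphism $\pi_2(BX)\oplus\bigl(\bigwedge^2\Ker(\psi_X)\bigr)\xrightarrow{\ \sim\ }H_2(BX_G)$ modulo $2$-torsion, the first summand being the Hurewicz image; equivalently $\tau$ is $2$-torsion and $\mathcal H$ is split mono modulo $2$-torsion, as claimed.

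The main obstacle I anticipate is the identification, modulo each odd prime $p$, of the degree-$2$ primitives of $H_*(BX_G;\mathbb{F}_p)$ with exactly $\mathcal H(\pi_2(BX))\otimes\mathbb{F}_p$ — i.e. that the fibration $\widetilde{BX}\to BX_G\to K(G,1)$ introduces no extra indecomposable degree-$2$ class beyond those coming from the simply connected fibre and the obvious $\bigwedge^2$ from the aspherical base $K(G,1)$ (here one uses that $G$ is abelian, so that the low-degree $\mathbb{F}_p$-Hopf-algebra structure of $K(G,1)$ is transparent, only Bockstein/Tor terms of $H_1$ entering besides $\bigwedge^2 H_1$, and these are $p$-locally harmless). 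The prime $2$ must genuinely be excluded: there the Pontryagin square $a\mapsto a\cdot a$ on $H_1(BX_G;\mathbb{F}_2)$ is a nontrivial quadratic operation, so $H_2(BX_G;\mathbb{F}_2)$ may be strictly larger than $\pi_2(BX)\otimes\mathbb{F}_2\oplus\bigwedge^2 H_1$, which is exactly why this lemma — and hence Theorems \ref{thm3homology} and \ref{Ale3homology} — are only valid modulo $2$-torsion (cf. Remarks \ref{thm3homologyre} and \ref{thmqq41}).
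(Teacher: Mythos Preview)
Your approach is genuinely different from the paper's, and it has a gap that you yourself flag but do not close.

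The paper's proof is a two-line citation: Clauwens shows $BX_G$ is a topological monoid, and Soul\'e \cite{Sou} and Arlettaz--Pointet-Tischler \cite{AP} prove that the second Postnikov $k$-invariant of any path-connected CW topological monoid is annihilated by $2$. That single fact finishes the lemma: $2k^3=0$ means that after inverting $2$ the second Postnikov section splits as $K(\pi_1,1)\times K(\pi_2,2)$, hence $H_2(BX_G)_{(\ell)}\cong \pi_2(BX)_{(\ell)}\oplus H_2^{\rm gr}(\Ker\psi_X)_{(\ell)}$ for every odd $\ell$, with the first summand exactly the Hurewicz image. No Hopf-algebra bookkeeping is needed.

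Your route tries instead to rebuild this splitting by hand from the Hopf-algebra structure on $H_*(BX_G;\mathbb{F}_p)$, prime by prime. The rational step is fine (though invoking Milnor--Moore for a non-simply-connected $H$-space is a bit loose). The odd-prime step, however, has the gap you name: you need the degree-$2$ primitives of $H_*(BX_G;\mathbb{F}_p)$ to coincide with $\mathcal H(\pi_2)\otimes\mathbb{F}_p$. That is not automatic from Borel's theorem alone --- the Hurewicz map lands in the primitives, but surjectivity onto them is exactly the content of ``$\tau$ is $2$-torsion'', which is what you are trying to prove. Relatedly, your Pontryagin-product section targets $\bigwedge^2(G\otimes\mathbb{F}_p)$, whereas the map you must split has target $H_2^{\rm gr}(G)\otimes\mathbb{F}_p\cong(\bigwedge^2 G)\otimes\mathbb{F}_p$; these differ by Tor/Bockstein terms, and ``$p$-locally harmless'' is an assertion, not an argument. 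In effect you are re-proving the Soul\'e/Arlettaz--Pointet-Tischler theorem in low degrees; the paper simply quotes it.
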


\begin{proof} As is known, the second $k$-invariants of path-connected topological monoids with CW-structure are
annihilated by $2$ \cite{Sou,AP}. Namely, the Hurewicz map is a $[1/2]$-splitting $\mathcal{H}$.
Noting $\pi_1( B X_G ) \cong \Ker (\psi_X) $ by Remark \ref{setrans},
we have $\Coker(\mathcal{H}) \cong H_2^{\rm gr}(\Ker (\psi_X) ) $,
which implies the required decomposition.
\end{proof}

\subsection{Example 1; Alexander quandles.}\label{s87Al}

\large
\baselineskip=16pt
We will compute the third quandle homologies of some quandles in more details than Theorem \ref{thm3homology2} shown above.
Notice that Theorem \ref{thm3homology} is of use with respect to quandles $X$ such that the order $|\mathrm{Inn}(X)|/|X| $ is small.
As such examples, 
we consider the third quandle homologies of Alexander quandles.
Actually, notice that the order $|\mathrm{Inn}(X)|/|X| $ equals $\mathrm{Type}(X)$ exactly.
\begin{thm}\label{Ale3homology}
Let $X$ be a regular Alexander quandle of finite order.
Then, there is the [1/2]-isomorphism $H_3^Q(X) \cong H_3^{\rm gr}(\As(X)) \oplus \bigl( \bigwedge^2 \Ker (\psi_X) \bigr) $.

Moreover, if the order of $X$ is odd, then the 2-torsion subgroups of the both sides are zero. \end{thm}
Here remark that, the associated group $\As(X)$ and the kernel $\Ker (\psi_X)$ have been completely calculated by Clauwens \cite{Cla2}.
In particular, the group $\As(X)$ is known to be a nilpotent group of degree $2$;
hence the group (co)homology is not simple, e.g., it contains some Massey products (see \cite[\S 4]{Nos4}).

\begin{proof}[Proof of Theorem \ref{Ale3homology}]
Let $\mathcal{K}$ denote the abelian $\Ker (\psi_X)$ for short.
Consider the rack space $B( X, \mathrm{Inn}(X))$
whose $\pi_1$ is $\Ker(\psi_X) = \Z \times \mathcal{K}$ by Remark \ref{setrans}.
It is shown that the torsion subgroups of the homology $ H_2( B(X, \mathrm{Inn}(X ))) $
and $ H_2( B(X,X)) $ are annihilated by $|X|^3$ (see \cite[Lemma 5.7]{Nos2} and \cite[Theorem 1.1]{LN}).
Hence, $H_3^Q(X)$ and $H_3^{\gr}(\As(X)) $ are annihilated by $|X|^3$, by repeating the proof of Theorem \ref{thm3homology2}.
Noticing $t_X = |\mathrm{Inn}(X)|/|X| $ as above,
the purpose $H_3^Q(X) \cong_{[1/2]} H_3^{\rm gr}(\As(X)) \oplus \mathcal{K} \oplus (\wedge^2 \mathcal{K} ) $ immediately follows from the regularity and Theorem \ref{thm3homology2}.

Finally, we work out the case where $|X|$ is odd.
By repeating the above discussion, the homologies $ H_2( B(X,X))$ and $ H_2(B(X, \mathrm{Inn}(X) ) ) $ have no $2$-torsion;
so does the $H_3^Q(X) $ as well. 
\end{proof}

\subsection{Example 2; Symplectic and spherical quandles.}\label{s87asl}

We next compute $H_3^Q(X)$ for the symplectic and orthogonal quandles over $\F_q$,
where we exclude the exceptional cases of $q$, i.e., $q\neq 3, 3^2, 3^3, 5, 7 $.
We remark that the orders $|\mathrm{Inn}(X)|/|X| $ are not simple, in contract to Theorem \ref{thm3homology2}.

\begin{thm}\label{thm14b}
Let $q = p^d$ be odd, and be not exceptional. 
\begin{enumerate}[(I)]
\item
Let $X$ be the symplectic quandle $\mathsf{Sp}_q^n$ over $\F_q$.
Then,
$$H_3^Q(X) \cong \left\{
\begin{array}{ll}
0 , &\quad \mathrm{if} \ n>1 \\
\Z/(q^2 -1) \oplus (\Z/p)^{d(d+1)/2} , &\quad \mathrm{if} \ n=1
\end{array}
\right.
$$
\item 
Let $X$ be the spherical quandle $S^{n}_{q}$ over $\F_q$. Then, there are $[1/2]$-isomorphisms
$$ H_3^Q(X) \cong_{[1/2]} \left\{
\begin{array}{ll}
0 , &\quad \mathrm{if} \ n>2 \\
\Z/(q^2 -1) \oplus \Z /(q - \delta_q) , &\quad \mathrm{if} \ n=2
\end{array}
\right.
$$
Here $\delta_q = \pm 1 $ is according to $q \equiv \pm 1$ $(\mathrm{mod} \ 4)$.
\end{enumerate}
\end{thm}
This theorem mostly settles a problem posed by Kabaya \cite{ILDT} for computing the homology $H_3^Q(\SQ_{q}^n ) $ with $n=1$.

\begin{proof}
(I) 
Recalling $\As(X) \cong \Z \times Sp(2n ;\F_q)$ from the proof of Theorem \ref{thm1b2},
we particularly see the kernel $\Ker (\psi_X) \cong \Z$.

First, we deal with the case $n=1$.
Notice $\mathrm{Inn}(X) \cong Sp(2;\F_q)$ and $|\mathrm{Inn}(X)| /|X|=q(q^2-1 )/q^2-1 = q$.
Therefore the purpose $H_3^Q(X) \cong_{[1/p]} \Z/ q^2 -1$ without $p$-torsion follows from Theorem \ref{thm3homology}.
So, we now focus on the $p$-torsion of $H_3^Q(X) $ with $n=1$. Consider the rack space $B(X,X)$ associated to the primitive $X$-set,
whose $P$-sequence is given by
$$ H_3^{\rm gr}(\mathrm{Stab}(x_0)) \lra \pi_2(BX ) \stackrel{\mathcal{H} }{\lra} H_2(B(X,X)) \lra H_2^{\rm gr}(\mathrm{Stab}(x_0))\lra 0 . $$
We can easily see the stabilizer $\mathrm{Stab}(x_0) \cong \Z \times (\Z/p)^d$.
Since $\pi_2(BX) \cong \Z \oplus \Z/(q^2-1) \oplus (\Z/p)^d$ by Theorem \ref{thm1b2}, the exact sequence is rewritten as
\begin{equation}\label{Pd1l2} H_3^{\rm gr} (\Z \times (\Z/p)^d) \lra \Z \oplus \Z/(q^2-1) \oplus (\Z/p)^d \stackrel{\mathcal{H} }{\lra} H_2(B(X,X)) \lra H_2^{\rm gr}(\Z \times (\Z/p)^d)\lra 0 .
\end{equation}
We here claim that the Hurewicz map $\mathcal{H}_{(p)}$ is a splitting injection.
Indeed, it follows from \eqref{eesp}
that the composite $P_* \circ \mathcal{H} : \pi_2(BX ) \ra H_2(BX) \cong \Z \oplus (\Z/p)^d$ is surjective, where $P:B(X,X) \ra BX$ is the covering.
Consequently, this claim made the previous sequence above into 
$$ H_2(B(X,X)) \cong_{(p)}\pi_2 (BX) \oplus H_2^{\rm gr} (\Z \times (\Z/p)^d) \cong_{(p)} \Z_{(p)} \oplus (\Z/p)^{2d} \oplus (\Z/p)^{d(d- 1)/2} . $$
Hence, compared with the isomorphism \eqref{eq.H3RQ}, we have $ H_3^Q(X) \cong_{(p)} (\Z/p)^{d(d+1) /2} $ as desired.

Next, when $n \geq 2$, we will show $H_3^Q(X) =0$ as stated.
It is shown \cite[\S 4.2]{Nos3}
that the second group homology of the stabilizer $\mathrm{Stab}(x_0)= \pi_1(B(X,X))$ is zero,
and that $H_2(BX) \cong \Z$.
Therefore, the $P$-sequencesarising from the projection $P: B(X,X ) \ra BX$ are written as
$$
\xymatrix{ H_3^{\rm gr}( \mathrm{Stab}(x_0 )) \ar[d]_{P_*} \ar[r]_{ \ \ \ \ \ \delta_*}& \pi_2(B(X,X)) \ar[d]^{\cong}_{P_*} \ar[r] & H_2(B(X,X) ) \ar[d]_{P_*} \ar[r] & 0 \\
H_3^{\rm gr}(\Z \times Sp(n;\F_q )) \ar[r] & \pi_2(BX) \ar[r] & H_2(BX ) \ \ (\cong \Z) \ar[r] & 0. }
$$
By the stability theorem \ref{factsl21}, the left vertical map $P_*$ surjects onto $\Z/(q^2-1)$.
Since $\pi_2(BX) \cong \Z \oplus \Z/(q^2-1)$ by Theorem \ref{thm1b2}, the delta map $\delta_*$ is
surjective in torsion part. Therefore a diagram chasing can show $H_2(B(X,X))=\Z$.
Using \eqref{eq.H3RQ} again, we have the goal $H_3^Q(X) =0$.

\

\noindent
(II)
The calculations of the third homology $H_3^Q(X) $ for
the spherical quandle $X=S^n_q$ over $\F_q$
can be shown in a similar way to the symplectic case.
The point is that the homology $H_i^{\rm gr} (\mathrm{Stab}(x_0))$
is $[1/2]$-isomorphic to $H_i^{\rm gr} (O(n;\F_q )) $ for $i \leq 3$ (see Proposition \ref{factsp2}).
Furthermore, we employ the computation of $H_2^Q(X) $ shown in \cite[\S 4.3]{Nos3}.
To summarize, using results of Quillen and Friedlander explained in \S \ref{asssym},
we can complete the proof similarly, although we omit writing the details.
\end{proof}


\subsection{Example 3; the extended quandles. }\label{s87asl}
We now discuss the groups $H_3^Q (\X) $ and $\Pi_2 (\X)$ of extended quandles $\X$.
\begin{thm}\label{3hom9}
Let $X$ be a connected quandle of type $t_X $. 
Let $p: \X \ra X$ be the universal covering mentioned in \S \ref{vanishes}.
If $H_3^{\gr} (\As (X))$ is finitely generated,
then there are $[1/t_X]$-isomorphisms
$$ H_3^Q(\X ) \cong_{[1/t_X]} \Pi_2(\X) \cong_{[1/t_X]} H_3^{\rm gr}(\As (X)). $$
Here the second isomorphism $ \Pi_2(\X) \cong H_3^{\rm gr}(\As (X))$ is obtained from the composite $\Theta_{X } \circ p_* $.
\end{thm}
\begin{proof}
We now construct the first isomorphism $\Pi_2(\X) \cong_{[1/t_X]} H_3^{Q}(\X) $.
Consider the rack space $ B(\X,\X)$ associated with the primitive $\X$-set.
Notice from Remark \ref{setrans} and Theorem \ref{extthm} (ii) that this $\pi_1$ is an abelian group $\Z \times \Ker (p_*) $.
Then the Postnikov tower is expressed by
$$ H_3^{\gr}( \Z \times \Ker (p_*) ) \lra \pi_2(B\X ) \xrightarrow{\ \mathcal{H}_{\X} \ } H_2(B(\X,\X) ) \lra H^{\rm gr}_2( \Z \times \Ker (p_*) ) \ra 0 \ \ \ \ \ \ ({\rm exact}). $$
Notice from Theorem \ref{extthm} (iii)(iv) that $H_i^{\gr}( \Z \times \Ker (p_*) )$ is annihilated by $t_X $ for $i \geq 2 $.
Hence, the Hurewicz map $\mathcal{H}_{\X}$ amounts for the claimed $[1/t_X]$-isomorphism $\Pi_2(\X) \cong H_3^{Q}(\X) $.

To prove the $[1/t_X]$-isomorphism $\Pi_2(\X) \cong H_3^{\rm gr}(\As (X)) $ and the latter part,
we first note $H_2^Q(\X) \cong_{[1/t_X]} 0$ by Theorems \ref{theorem} and \ref{extthm}.
Considering the $[1/t_X]$-isomorphisms $p_*: H_3^{\gr }( \As(\X)) \ra H_3^{\gr }(\As(X))$ and the $T$-map $\Theta_X: \Pi_2(\X) \ra H_3^{\rm gr}(\As (\X)) $ in
Theorem \ref{extthm} (v) and Theorem \ref{theorem} respectively,
we obtain the composite $[1/t_X]$-isomorphism $ p_* \circ \Theta_X: \Pi_2(\X) \ra H_3^{\rm gr}(\As (X)) $ as desired.
\end{proof}
Furthermore, let us show a lemma which will be used in a subsequent paper \cite{Nos4}.
\begin{lem}\label{k1pg}
Let $X$ be a connected Alexander quandle of type $t_X $.
Let $p: \X \ra X$ be the universal covering.
Then the induced map $p_*: H_3^Q(\X) \ra H_3^Q(X)$ is injective up to $2t_X $-torsion.
\end{lem}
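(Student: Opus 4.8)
Lemma \ref{k1pg} asserts that $p_*\colon H_3^Q(\widetilde{X})\to H_3^Q(X)$ is injective modulo $2t_X$-torsion. Here is how I would prove it.

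The plan is to compare the two quandle homologies through the rack spaces $B(\widetilde{X},\widetilde{X})$ and $B(X,X)$, exactly as in the proofs of Theorems \ref{thm3homology} and \ref{3hom9}, but now tracking the map induced by $p$. First I would recall from \eqref{eq.H3RQ} that for any connected quandle $Y$ of finite type one has $H_2(B(Y,Y))\cong \Z\oplus H_3^Q(Y)\oplus H_2^Q(Y)$, and that the map $p\colon \widetilde{X}\to X$ induces a compatible map of rack spaces $B(p)\colon B(\widetilde{X},\widetilde{X})\to B(X,X)$ whose effect on $H_2$ decomposes along this splitting. Since $H_2^Q(\widetilde{X})\cong \Ker(p_*)$ is annihilated by $t_X$ (Theorem \ref{propf11ti82}), working modulo $t_X$-torsion it suffices to show that $B(p)_*\colon H_2(B(\widetilde{X},\widetilde{X}))\to H_2(B(X,X))$ is injective modulo $2t_X$-torsion, and then to restrict to the $H_3^Q$ summand.

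The key step is to factor $B(p)_*$ through the homotopy groups, using Lemma \ref{clamonoid}. By Remark \ref{setrans} and Proposition \ref{propf11ti}, $\pi_1(B(\widetilde{X},\widetilde{X}))\cong \Z\times\Ker(p_*)$ is abelian, and $\pi_1(B(X,X))\cong \mathrm{Stab}(x_0)\subset \As(X)$; moreover both $B(\widetilde{X},\widetilde{X})_G$ and $B(X,X)_G$ (the covers with abelian fundamental group $\Ker(\psi_{\widetilde{X}})$, $\Ker(\psi_X)$) carry topological monoid structures by Clauwens, so the Hurewicz maps split modulo $2$-torsion. Using \eqref{relezafk} to pass between $B(Y,Y)$ and $B(Y,\mathrm{Inn}(Y))$ up to $|\mathrm{Inn}(Y)|/|Y|$-torsion — which divides $2t_X$ once we combine with Theorem \ref{3hom9}'s bounds and Lemma \ref{lem11} — the comparison of $H_2$ reduces to the comparison of $\pi_2(B\widetilde{X})\oplus \wedge^2\Ker(\psi_{\widetilde X})$ with $\pi_2(BX)\oplus\wedge^2\Ker(\psi_X)$. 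On the $\pi_2$ factor, Theorem \ref{theorem} identifies $\pi_2(BX)_{(\ell)}\cong H_3^{\gr}(\As(X))_{(\ell)}\oplus H_2^Q(X)_{(\ell)}\oplus\Z_{(\ell)}$ and likewise for $\widetilde X$, and Proposition \ref{kunosan} says $p_*\colon H_3^{\gr}(\As(\widetilde X))\to H_3^{\gr}(\As(X))$ is an isomorphism modulo $t_X$; combined with the functoriality of the $P$-sequence this forces the induced map on the $H_3^Q$-summand to be injective modulo $2t_X$-torsion, since its kernel is controlled by $H_2^Q(\widetilde X)$ and $H_2^{\gr}$ of the (abelian, $t_X$-torsion) kernel $\Ker(p_*)$.

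The main obstacle will be bookkeeping the various torsion bounds so that they all collapse into the single bound $2t_X$: the transfer arguments of \eqref{relezafk} a priori cost $|\mathrm{Inn}(Y)|/|Y|$-torsion rather than $t_X$-torsion, and one must use that for the extended quandle $\widetilde X$ the covering $\widetilde X\to X$ plus Proposition \ref{propf11ti} pins down $\mathrm{Stab}(1_{\widetilde X})=\Z\times\Ker(p_*)$ with $\Ker(p_*)$ annihilated by $t_X$, so that $|\mathrm{Inn}(\widetilde X)|/|\widetilde X|$ itself divides (a bounded multiple of) $t_X$ — the relevant fact being Lemma \ref{lem11}. The $2$ appears only from the $k$-invariant splitting in Lemma \ref{clamonoid}. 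Once these are in hand, the diagram chase in the $P$-sequences of $B(\widetilde X,\widetilde X)\to B\widetilde X$ and $B(X,X)\to BX$ — which is the same shape as the one in the proof of Theorem \ref{3homology}(I) for $n\geq 2$ — yields the injectivity of $p_*$ on $H_3^Q$ up to $2t_X$-torsion.
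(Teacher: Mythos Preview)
Your overall strategy — compare $H_2(B(\widetilde X,\widetilde X))$ with $H_2(B(X,X))$ through the Hurewicz maps, then project to the $H_3^Q$-summand via \eqref{eq.H3RQ} — is exactly the paper's. The difference is in how you handle the $\widetilde X$-side, and there your detour through Lemma~\ref{clamonoid} and the transfer \eqref{relezafk} creates a gap.

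The problem is your torsion bookkeeping for $\widetilde X$. You invoke Lemma~\ref{lem11} to claim that $|\mathrm{Inn}(\widetilde X)|/|\widetilde X|$ divides a bounded multiple of $t_X$, but Lemma~\ref{lem11} says the \emph{opposite} direction: the type divides $|\mathrm{Inn}|/|X|$, not conversely. What is actually true (and would suffice) is that every prime dividing $|\mathrm{Inn}(\widetilde X)|/|\widetilde X|$ also divides $t_X$; this can be extracted from Proposition~\ref{propf11ti} by passing to the image of $\mathrm{Stab}(1_{\widetilde X})=\Z\times\Ker(p_*)$ in $\mathrm{Inn}(\widetilde X)$, but you have not argued it, and it is not Lemma~\ref{lem11}. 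Moreover, once you have the splitting $H_2\cong \pi_2\oplus\wedge^2\Ker(\psi)$ on each side, you still owe an argument for the behaviour of $p_*$ on the $\wedge^2$-pieces; the phrase ``its kernel is controlled by $H_2^Q(\widetilde X)$ and $H_2^{\gr}$ of $\Ker(p_*)$'' is not a proof.

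The paper avoids all of this on the $\widetilde X$-side: it never passes to $B(\widetilde X,\mathrm{Inn}(\widetilde X))$ or uses the monoid structure there. Instead it uses the $P$-sequence of $B(\widetilde X,\widetilde X)$ directly. By Proposition~\ref{propf11ti} one has $\pi_1(B(\widetilde X,\widetilde X))=\mathrm{Stab}(1_{\widetilde X})=\Z\times\Ker(p_*)$, and since $\Ker(p_*)$ is $t_X$-torsion (Proposition~\ref{lemwl}) its $H_2^{\gr}$ and $H_3^{\gr}$ vanish at $\ell\nmid t_X$. Hence $\mathcal H_{\widetilde X}\colon \pi_2(B\widetilde X)_{(\ell)}\to H_2(B(\widetilde X,\widetilde X))_{(\ell)}$ is an isomorphism outright — no transfer, no $2$-torsion from $k$-invariants. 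On the $X$-side the paper does use the Alexander hypothesis (so that $|\mathrm{Inn}(X)|/|X|=t_X$) together with Lemma~\ref{clamonoid} to get $\mathcal H_X$ injective mod $2t_X$; this is where the factor $2$ enters. The injectivity of $p_*$ on $\pi_2$ mod $t_X$ is then implicit from Theorem~\ref{3hom9} (since $\Theta_X\circ p_*$ is an isomorphism), and a straight diagram chase finishes. So: keep your reduction to $H_2(B(\,\cdot\,,\,\cdot\,))$, but for $\widetilde X$ replace the monoid/transfer argument by the direct $P$-sequence computation using Proposition~\ref{propf11ti}.
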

\begin{proof} By the proof of Theorem \ref{Ale3homology} (see \S \ref{sSproof}),
the Hurewicz map $\mathcal{H}_{X}: \pi_2(B(X,X)) \ra H_2(B(X,X))$ is injective up to $2t_X $-torsion.
Consider the Postnikov tower with respect to the $p: \X \ra X$:
$$
\xymatrix{ H_3^{\gr }(\Z \times \Ker (p_*))_{(\ell)} \ar[r]^{\ \ \ \ \ \ 0} \ar[d]_{p_*} & \pi_2 (B\X)_{(\ell)} \ar[r]^{\!\!\!\! \!\! \mathcal{H}_{\X} } \ar[d]_{p_*} & H_2(B(\X,\X))_{(\ell)} \ar[d]_{p_*} \ar[r] & H_2^{\gr}( \Z \times \Ker (p_*))_{(\ell)} = 0 \ar[d]_{p_*} \\
H_3^{\gr}( \mathrm{Stab}(x_0))_{(\ell)} \ar[r]^{\ \ \ \ \ \ 0} & \pi_2(BX)_{(\ell)} \ar[r]^{\!\!\!\! \!\! \mathcal{H}_{X} } & H_2(B(X,X))_{(\ell)} \ar[r] & H_2 ^{\gr}( \mathrm{Stab}(x_0))_{(\ell)} .}
$$
Here a prime $\ell$ is relatively prime to $2t_X$. Since the upper $ \mathcal{H}_{\X}$ is a $[1/2t_X]$-isomorphism (see the previous proof of Theorem \ref{3hom9}),
the vertical map $ p_* : H_2(B(\X,\X))_{(\ell)} \ra H_2(B(X,X))_{(\ell)}$
is injective. Hence, by \eqref{eq.H3RQ} as usual, the map $p_* : H_3^Q(\X) \ra H_3^Q(X)$ turns out to be a $[1/2t_X]$-injection.
\end{proof}


\appendix

\section{Proof of Theorem \ref{ehyouj2i3}}\label{kokoroni}

We will show Theorem \ref{ehyouj2i3} as a result of Proposition \ref{ehyouji}.
This proposition provides an algorithm to compute the first rack homology as follows:
\begin{prop}\label{ehyouji}
Let $X$ be a quandle, and $Y$ an $X$-set.
Decompose $Y $ into the orbits as $Y= \sqcup_{i \in I}Y_i$.
For $i \in I $, choose an arbitrary element $y_i \in Y_i$, and denote by $\mathrm{Stab}(y_i) \subset \As (X)$ the stabilizer subgroup of $y_i$.
Then $H_1^R (X,Y ) $ is isomorphic to the direct sum of the abelianizations of $ \mathrm{Stab}(y_i) $.
Precisely, $H_1^R (X,Y ) \cong \oplus_{i \in I} ( \mathrm{Stab}(y_i) )_{\mathrm{ab}}.$
\end{prop}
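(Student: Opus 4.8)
\textbf{Proof plan for Proposition \ref{ehyouji}.}
The plan is to identify the rack space $B(X,Y_i)$ as a covering of $BX$ with deck group the stabilizer $\mathrm{Stab}(y_i)$, and then compute its first homology via the Hurewicz theorem. First I would invoke Proposition \ref{propFRS}: it tells us that each $B(X,Y_i)$ is path-connected, that the projection $B(X,Y_i)\to BX$ is a covering, and — crucially — that $\pi_1(B(X,Y_i))\cong\mathrm{Stab}(y_i)\subset\As(X)$. Since the rack complex $(C_*^R(X,Y),\partial_*^R)$ computes $H_*^R(X,Y)$ and splits as a direct sum over the orbit decomposition $Y=\sqcup_{i\in I}Y_i$ (the boundary map never mixes orbits, because the group action on $Y$ respects orbits), it suffices to treat each $Y_i$ separately, i.e.\ to show $H_1^R(X,Y_i)\cong(\mathrm{Stab}(y_i))_{\mathrm{ab}}$. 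By the Hurewicz theorem, for any path-connected space $Z$ one has $H_1(Z;\Z)\cong\pi_1(Z)_{\mathrm{ab}}$, so applying this to $Z=B(X,Y_i)$ immediately gives $H_1^R(X,Y_i)=H_1(B(X,Y_i);\Z)\cong\pi_1(B(X,Y_i))_{\mathrm{ab}}\cong(\mathrm{Stab}(y_i))_{\mathrm{ab}}$.

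The one genuinely substantive point is the identification of the cellular chain complex of $B(X,Y_i)$ with the restriction of the rack complex to generators whose $Y$-coordinate lies in $Y_i$; this is already recorded in the excerpt (the cellular complex of $B(X,Y)$ is isomorphic to $(C_*^R(X,Y),\partial_*^R)$), so there is nothing new to verify there — I would just note that the cell decomposition of $B(X,Y)$ restricts to one of $B(X,Y_i)$, the $n$-cells being indexed by $Y_i\times X^n$. The rest is a direct assembly: take the direct sum over $i\in I$ of the isomorphisms $H_1^R(X,Y_i)\cong(\mathrm{Stab}(y_i))_{\mathrm{ab}}$ to obtain $H_1^R(X,Y)\cong\bigoplus_{i\in I}(\mathrm{Stab}(y_i))_{\mathrm{ab}}$, and observe that the choice of basepoint $y_i\in Y_i$ is irrelevant up to isomorphism since different choices give conjugate stabilizers in $\As(X)$, hence isomorphic abelianizations.

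I expect essentially no serious obstacle here — the proof is a clean consequence of Proposition \ref{propFRS} together with Hurewicz — so the ``hard part'' is really only bookkeeping: making sure the orbit decomposition of $Y$ induces the claimed direct-sum decomposition at the chain level, and being careful that the covering $B(X,Y_i)\to BX$ indeed has $\pi_1$ equal to the stabilizer (which is exactly the content of Proposition \ref{propFRS} that we are entitled to assume). Finally, to recover Theorem \ref{ehyouj2i3} from this, one applies the proposition with $Y=X$ the primitive $X$-set, uses Remark \ref{hayano} to pass from $H_1^R(X,X)$ to $H_2^R(X,\mathrm{pt})$, invokes the splitting $H_2^R(X)\cong H_2^Q(X)\oplus\Z^{\OX}$ from \eqref{eq.H2RQ} to strip off the $\Z^{\OX}$ summand (which matches the $\Z$ generated by $e_{x_i}$ inside $\mathrm{Stab}(x_i)_{\mathrm{ab}}$ via the abelianization $\As(X)_{\mathrm{ab}}\cong\Z^{\OX}$), and thereby obtains $H_2^Q(X)\cong\bigoplus_{i\in\OX}(\mathrm{Stab}(x_i)\cap\Ker(\varepsilon_i))_{\mathrm{ab}}$; this last bit of extraction — tracking how the $\Z^{\OX}$ splits off and identifying the complement with $\bigoplus(\mathrm{Stab}(x_i)\cap\Ker\varepsilon_i)_{\mathrm{ab}}$ — is the only place requiring a little care.
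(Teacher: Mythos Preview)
Your proof is correct and follows essentially the same approach as the paper: invoke Proposition \ref{propFRS} to identify $\pi_1(B(X,Y_i))\cong\mathrm{Stab}(y_i)$, apply Hurewicz to get $H_1(B(X,Y_i))\cong\mathrm{Stab}(y_i)_{\mathrm{ab}}$, and take the direct sum over orbits. The paper's proof is just a terser version of exactly this argument, and your additional remarks on the orbit decomposition at the chain level and on deriving Theorem \ref{ehyouj2i3} align with the paper's subsequent treatment.
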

\begin{proof} 
Recall from Proposition \ref{propFRS},
that each connected component of the $B(X,Y)$ is $B(X,Y_i )$,
and $\pi_1(B(X,Y))$ is the stabilizer $\mathrm{Stab}(y_i )$. 
Thereby $H_1(B(X,Y_i ) ) \cong \pi_1(B(X,Y_i ) )_{\rm ab} \cong \mathrm{Stab}(y_i)_{\rm ab} $.
Hence, we conclude
\[H_1^R(X,Y) \cong H_1 (B(X,Y) ) \cong \bigoplus_{i \in I} H_1 (B(X,Y_i ) ) \cong \bigoplus_{i\in I} \mathrm{Stab}(y_i)_{\mathrm{ab}}. \]
\vskip -1pc
\vskip -0.7pc
\end{proof}
\begin{proof}[Proof of Theorem \ref{ehyouj2i3}]
We first show \eqref{1111} below.
Let $Y=X$ be the primitive $X$-set.
For each $x_i \in X_i$, we have $e_{x_i} \in \mathrm{Stab}(x_i)$ since $x_i\tri x_i =x_i$.
Hence, the restriction of $\varepsilon_i: \As (X) \ra \Z$ on $\mathrm{Stab}(x_i)$ is also surjective, and
permits a section $\mathfrak{s}:\Z \ra \mathrm{Stab}(x_i)$ defined by $\mathfrak{s}(n)=e_{x_i}^n$.
Here we remark that the action of $\Z$ on $\mathrm{Stab}(x_i) \cap \Ker (\varepsilon_i) $ induced by the section is trivial.
Indeed, the equality \eqref{hasz2} implies $g^{-1} e_{x_i} g =e_{x_i} \in \As(X)$ for any $g \in \mathrm{Stab}(x_i) $.
We therefore have $ \mathrm{Stab}(x_i)_{\mathrm{ab}} \cong \bigl( \mathrm{Stab}(x_i) \cap \Ker (\varepsilon_i) \bigr)_{\mathrm{ab}} \oplus \Z $.
Hence it follows from Proposition \ref{ehyouji} that
\begin{equation}\label{1111} H_1^R(X,X ) \cong \bigoplus_{i\in \OX} \mathrm{Stab}(x_i) _{\mathrm{ab}} \cong \Z^{\oplus \OX } \oplus \bigoplus_{i\in \OX } \bigl( \mathrm{Stab}(x_i) \cap \Ker (\varepsilon_i)\bigr)_{\mathrm{ab}}. \end{equation}

Finally, it is sufficient to show that $H_2^Q(X)$ is isomorphic to the last summand.
Recall $ H^R_2 (X) \cong H^R_1(X,X) $ in Remark \ref{setrans}.
It is known \cite[Theorem 2.1]{LN} that $ H^R_2 (X) \cong H^Q_2 (X ) \oplus \Z^{\oplus \OX}$, and that a basis of the $\Z^{\oplus \OX}$ is represented by $(x_i,x_i) \in C_2^R(X)$ for $i \in \OX$.
By comparing the basis with the isomorphisms in \eqref{1111}, we complete the proof.
\end{proof}

\section{Proof of Proposition \ref{prop.homcol}}\label{ds1epr23}\begin{proof}[Proof of Proposition \ref{prop.homcol}.]
We first construct an $X$-coloring
from any element $(x_1, \dots, x_{\# L }, f )$ in \eqref{1v4}.
Let us denote by $\gamma_i$ the oriented arc associated to the meridian $\mathfrak{m}_i $,
and color the $\gamma_i$ by the $x_i \in X $.
For each $i $, we consider the path $ \mathcal{P}_i$ along the longitude $ \mathfrak{l}_i$ as illustrated in the figure below.
Furthermore, let $\alpha_0(=\gamma_i) , \alpha_1 , \dots, \alpha_{N_i-1 }, \alpha_{N_i}=\alpha_0 $ be the over-paths on this $ \mathcal{P}_i $,
and let $ \beta_{k} \in \pi_1(S^3 \setminus L) $ be the meridian corresponding to
the arc that divides the arcs $\alpha_{k-1 }$ and $\alpha_k$.
Then, we define a map $ \mathcal{ C}: \{$over arcs of $D \ \} \ra X $ 
by the formula
$$ \mathcal{ C}(\alpha_{k }) := x_i \cdot \bigl( f( \beta_{1}^{\epsilon_1}) \cdots f( \beta_{k-1}^{\epsilon_{k-1}} ) \bigr) \in X . $$
Here $\epsilon_j \in \{ \pm 1\}$ is the sigh of the crossing of $ \alpha_{j}$ and $ \beta_{j} $.
Note $ \mathcal{ C}(\alpha_{N_i }) = \mathcal{ C}(\alpha_0 ) =x_i $
since the longitude $\mathfrak{l}_i $ equals the product $ \beta_{1}^{\epsilon_1} \beta_2^{\epsilon_2} \cdots \beta_{N_i -1 }^{\epsilon_{N_i -1 }} $ by definition.
Here we claim that this $ \mathcal{ C} $ is an $X$-coloring.
For this purpose, using \eqref{hasz2}, we notice equalities
\[ e_{\mathcal{ C} ( \alpha_{k })} = e_{x_i \cdot f( \beta_{1}^{\epsilon_1}) \cdots f( \beta_{k-1}^{\epsilon_{k-1}} ) }\]
\begin{equation}\label{AI2} = \bigl( f( \beta_{1}^{\epsilon_1}) \cdots f( \beta_{k-1}^{\epsilon_{k-1}} ) \bigr)^{-1} f(\mathfrak{m}_i) \bigl( f( \beta_{1}^{\epsilon_1}) \cdots f( \beta_{k-1}^{\epsilon_{k-1}} ) \bigr) = f( \alpha_{k })\in \mathrm{As}(X).
\end{equation}
Hence, with respect to the crossing between $\alpha_k$ and $\beta_k$ with $k \leq N_i$, we have the following equality in $X $:
$$ \mathcal{ C}(\alpha_{k }) \lhd \mathcal{ C}(\beta_{k }) = \mathcal{ C}(\alpha_{k }) \cdot e_{\mathcal{ C}(\beta_{k })}^{\epsilon_k}= \mathcal{ C}(\alpha_{k }) \cdot f( \beta_{k }^{\epsilon_k })
= x_i \cdot \bigl( f( \beta_{1}^{\epsilon_1}) \cdots f( \beta_{k-1}^{\epsilon_{k-1}} ) \bigr) \cdot f( \beta_{k }^{\epsilon_k}) = \mathcal{ C}(\alpha_{k +1}) . $$
This equality means that this $ \mathcal{ C} $ turns out to be an $X$-coloring as desired.
Here note the equality $f= \Gamma_{ \mathcal{ C} }$ for such an $X$-coloring $\mathcal{ C} $ coming from the original $ f $ (see the previous \eqref{AI2}).

To summarize, we obtain a map from the set \eqref{1v4} to the set $\mathrm{Col}_X(D)$
which carries such $(x_1, \dots, x_{\# L }, f )$ to the above $ \mathcal{ C} $.
Moreover, by construction,
it is the desired inverse mapping of the $\Gamma_{\bullet }$, which proves the desired bijectivity.
\end{proof}

\begin{figure}[htpb]
\begin{center}
\begin{picture}(50,74)
\put(-100,50){\large $\alpha_0 = \gamma_i $}
\put(-13,24){\large $\alpha_1 $}
\put(14,26){\large $\alpha_2 $}

\put(96,66){\Large $\mathcal{P}_i $}

\put(-66,33){\pc{longitude2}{0.34}}

\put(-26,43){\large $\beta_1 $}
\put(11,43){\large $\beta_2 $}
\put(69,41){\large $\beta_{N_i} $}
\put(33,43){\large $\cdots $}
\end{picture}
\end{center}
\vskip -0.95pc
\end{figure}

\begin{cor}\label{prop.homcol2}Let $X$ be a quandle
such that the map $X \ra \As(X)$ sending $x$ to $e_x$ is injective. Let $D$ be a diagram of an oriented link $L$.
We fix a meridian $\mathfrak{m}_i \in \pi_1(S^3 \setminus L)$ of each link-component.
Then, the set of $X$-colorings of $D$ is bijective to the following set:
\begin{equation}\label{ffu2} \{ \ f \in \Hom_{\gr} (\pi_1(S^3 \setminus K), \As(X)) \ | \ f(\mathfrak{m}_i)= e_{x_i} \mathrm{\ for \ some \ } x_i \in X \ \}.
\end{equation}
\end{cor}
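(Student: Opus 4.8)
\textbf{Proof proposal for Corollary \ref{prop.homcol2}.}

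The plan is to derive this corollary directly from Proposition \ref{prop.homcol}, which already establishes a bijection between $\mathrm{Col}_X(D)$ and the set in \eqref{1v4}. So the only task is to check that, under the extra hypothesis that $x \mapsto e_x$ is injective, the set in \eqref{1v4} coincides (via the obvious forgetful map) with the set in \eqref{ffu2}. First I would build the map from \eqref{1v4} to \eqref{ffu2}: given a tuple $(x_1,\dots,x_{\#L},f)$ satisfying $f(\mathfrak{m}_i)=e_{x_i}$ and $x_i\cdot f(\mathfrak{l}_i)=x_i$, simply discard the $x_i$'s and keep $f$; the condition $f(\mathfrak{m}_i)=e_{x_i}$ for some $x_i\in X$ is then immediate. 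Conversely, given $f$ in \eqref{ffu2} with $f(\mathfrak{m}_i)=e_{x_i}$, injectivity of $x\mapsto e_x$ shows that the element $x_i\in X$ is \emph{uniquely determined} by $f$, so there is a well-defined candidate preimage $(x_1,\dots,x_{\#L},f)$; these two assignments are clearly mutually inverse once we know the tuple actually lies in \eqref{1v4}.

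The one point that genuinely requires an argument is that the longitude condition $x_i\cdot f(\mathfrak{l}_i)=x_i$ is automatic, i.e.\ does \emph{not} need to be imposed separately in \eqref{ffu2}. Here I would use the standard fact that in $\pi_1(S^3\setminus L)$ each longitude $\mathfrak{l}_i$ commutes with the corresponding meridian $\mathfrak{m}_i$ (they both lie on the boundary torus of the $i$-th component), hence $f(\mathfrak{l}_i)$ commutes with $f(\mathfrak{m}_i)=e_{x_i}$ in $\As(X)$. Using the identity \eqref{hasz2}, namely $e_{x_i}\cdot f(\mathfrak{l}_i)^{-1}\cdot e_{x_i}^{-1} = e_{x_i\cdot f(\mathfrak{l}_i)^{-1}}$ — equivalently $e_{x_i\cdot f(\mathfrak{l}_i)} = f(\mathfrak{l}_i)^{-1} e_{x_i} f(\mathfrak{l}_i) = e_{x_i}$ by commutativity — we get $e_{x_i\cdot f(\mathfrak{l}_i)} = e_{x_i}$ in $\As(X)$. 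Now injectivity of $x\mapsto e_x$ forces $x_i\cdot f(\mathfrak{l}_i) = x_i$ in $X$, which is exactly the missing condition.

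Putting these together: the forgetful map \eqref{1v4} $\to$ \eqref{ffu2} is a bijection, and composing with the bijection of Proposition \ref{prop.homcol} yields the claimed bijection $\mathrm{Col}_X(D)\simeq$ \eqref{ffu2}. I expect the main (very mild) obstacle to be the bookkeeping around well-definedness of the inverse map — one must be slightly careful that the meridian $\mathfrak{m}_i$ referred to in \eqref{ffu2} is chosen compatibly with the meridian-longitude pair used in Proposition \ref{prop.homcol}, but since changing the chosen meridian within a link component replaces it by a conjugate and the conditions are conjugation-invariant in the appropriate sense, this causes no trouble. No delicate estimate or deep input is needed; the corollary is essentially a clean-up of Proposition \ref{prop.homcol} under an injectivity hypothesis.
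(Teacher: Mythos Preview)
Your proposal is correct and follows essentially the same approach as the paper: reduce to Proposition~\ref{prop.homcol}, then use commutativity of $\mathfrak{m}_i$ and $\mathfrak{l}_i$ together with \eqref{hasz2} to get $e_{x_i\cdot f(\mathfrak{l}_i)}=e_{x_i}$, and conclude $x_i\cdot f(\mathfrak{l}_i)=x_i$ from injectivity of $x\mapsto e_x$. The paper's proof is just a one-line version of your middle paragraph.
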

\begin{proof}
By Proposition \ref{prop.homcol}, it is enough to show that
such an $f$ in \eqref{ffu2} satisfies the equality $ x_i \cdot f(\mathfrak{l}_i) =x_i $.
Actually, noting $ e_{x_i}=f(\mathfrak{m}_i)= f(\mathfrak{l}_i)^{-1 }f(\mathfrak{m}_i)f(\mathfrak{l}_i)= f(\mathfrak{l}_i)^{-1 }e_{x_i}f(\mathfrak{l}_i)= e_{x_i \cdot f(\mathfrak{l}_i)}$ by \eqref{hasz2},
the injectivity $X \hookrightarrow \As(X) $ concludes the desired $ x_i \cdot f(\mathfrak{l}_i) =x_i $.
\end{proof}
\noindent
For example, every quandle in Section \ref{twoto} satisfies the injectivity of $X \ra \As(X)$.

\subsection*{Acknowledgment}
The author expresses his gratitude to Tomotada Ohtsuki and Seiichi Kamada for comments and advice.
He also thanks the referee for kindly reading this paper.

\vskip 1pc

\normalsize

Faculty of Mathematics, Kyushu University, 744, Motooka, Nishi-ku, Fukuoka, 819-0395, Japan

\

E-mail address: {\tt nosaka@math.kyushu-u.ac.jp}

\end{document}